\def\ds{\displaystyle}
\def\eps{{\varepsilon}}
\def\N{\mathbb{N}}
\def\R{\mathbb{R}}
\def\HH{\mathcal{H}}
\newcommand{\be}{\begin{equation}}
\newcommand{\ee}{\end{equation}}
\newcommand{\de}{\partial}
\newcommand{\dist}{{\rm {dist}}}
\newcommand{\spt}{{\rm {supp}}}
\newcommand{\bC}{{\bf C}}
\theoremstyle{plain}
\newtheorem{theo}{Theorem}
\newtheorem{exam}{Example}
\numberwithin{equation}{section}
\theoremstyle{plain}
\newtheorem{teo}{Theorem}[section]
\newtheorem{lemma}[teo]{Lemma}
\newtheorem{prop}[teo]{Proposition}
\theoremstyle{remark}
\newtheorem{oss}[teo]{Remark}
\title[An epiperimetric inequality for some free boundary problems]{An epiperimetric inequality for the regularity of some free boundary problems: the $2$-dimensional case}
\author{Luca Spolaor, Bozhidar Velichkov}
\address {Luca Spolaor: \newline \indent
	Massachusetts Institute of Technology (MIT), 
	\newline \indent
	77 Massachusetts Avenue, Cambridge 
	MA 02139, USA}
\email{lspolaor@mit.edu}
\address {Bozhidar Velichkov: \newline \indent
Laboratoire Jean Kuntzmann (LJK), Universit\'e Grenoble Alpes
\newline \indent
B\^atiment IMAG, 700 Avenue Centrale, 38401 Saint-Martin-d'H\`eres}
\email{bozhidar.velichkov@univ-grenoble-alpes.fr}
\begin{document}


\begin{abstract}
Using a direct approach, we prove a $2$-dimensional epiperimetric inequality for the one-phase problem in the scalar and vectorial cases and for the double-phase problem. From this we deduce, in dimension $2$, the $C^{1,\alpha}$ regularity of the free-boundary in the scalar one-phase and double-phase problems, and of the reduced free boundary in the vectorial case, without any restriction on the sign of the component functions. Furthermore we show that in the vectorial case the free boundary can end in a cusp.
\end{abstract}

\maketitle

\textbf{Keywords:} epiperimetric inequality, monotonicity formula, free boundary, one-phase, double-phase, vector valued 


\section{Introduction}
In this paper we prove the $C^{1,\alpha}$ regularity of the free-boundary in the two dimensional case for the following three problems: the classical one-phase problem in the scalar and vectorial case, and the double-phase problem. In the scalar case, the regularity of the free boundary at flat points was first proved by Alt-Caffarelli for the one-phase (see \cite{AlCa}) and by Alt-Caffarelli-Friedman for the double phase (see \cite{AlCaFr}). In the vectorial case, the main results are due to Caffarelli-Shahgholian-Yeressian, Kriventsov-Lin and Mazzoleni-Terracini-Velichkov (see \cite{CaShYe, KrLi, MaTeVe}), and although they hold in any dimension, they all require additional assumptions on the positivity of the components of the vector valued minimizer. 
While these results rely on the so called improved flatness, based on a boundary Harnack principle, our result is achieved by proving first an epiperimetric inequality for the boundary datum of a minimizer, and then applying standard techniques first introduced in the context of free boundary problems by Weiss, Focardi-Spadaro and Garofalo-Petrosyan-Garcia (cp. \cite{Weiss2,FoSp,GaPeVe}). Notice also that, differently than these results, our proof of the epiperimetric inequality is direct: that is we construct an explicit competitor whose energy is strictly smaller than the $1$-homogeneous extension of the boundary datum. The idea for such a construction is inspired by results of Reifenberg and White (cp. \cite{Reif2, Wh}) in the context of minimal surfaces, which need to be substantially modified to take into account the measure term which appears in our functional. 

Finally we remark that our work is inspired by Weiss' observation in the context of the obstacle problem which states "...it should however be possible to give a direct proof of the epiperimetric inequality which would then also cover singular sets of
intermediate dimension" (see \cite{Weiss2}). Indeed, in forthcoming work joint with Max Engelstein, we will extend our results to dimension higher than $2$ and use it to study some special singular points of the free-boundary.  

\subsection{Statements of the main Theorems}

Let $\Omega\subset \R^d$ be an open set and consider the following three functionals:
\begin{itemize}
\item[(OP)] $\ds \mathcal{E}_{OP}(u):=\int_\Omega |\nabla u|^2\,dx+\big|\{u>0\}\cap\Omega\big|$, where $u\ge 0$ and $u\in H^1(\Omega)$;
\item[(DP)] $\ds \mathcal{E}_{DP}(u):=\int_\Omega |\nabla u|^2\,dx+\lambda_1\,\big|\{u>0\}\cap\Omega\big|+\lambda_2\,\big|\{u<0\}\cap \Omega\big|$, where $u\in H^1(\Omega)$ and $\lambda_1,\lambda_2>0$, $\lambda_1\neq \lambda_2$;
\item[(V)] $\ds \mathcal{E}_{V}(u):=\int_\Omega |\nabla u|^2\,dx+\big|\{|u|>0\}\cap\Omega\big|=\sum_{i=1}^n\int_\Omega|\nabla u_i|^2\,dx+\big|\{|u|>0\}\cap\Omega\big|$, where $u\in H^1(\Omega; \R^n)$ and $n\ge 1$.
\end{itemize}
We say that
\begin{itemize}
\item $u\in H^1(\Omega)$ is a \emph{minimizer of $\mathcal{E}_{OP}$} in $\Omega$, if $u\ge0$ and $\mathcal{E}_{OP}(u)\leq \mathcal{E}_{OP}(\tilde u)$ for every $\tilde u\in H^1(\Omega)$ with $\tilde u|_{\de \Omega}=u|_{\de \Omega}$, that is $u-\tilde u\in H^1_0(\Omega)$;
\item  $u\in H^1(\Omega)$ is a \emph{minimizer of $\mathcal{E}_{DP}$} in $\Omega$, if $\mathcal{E}_{DP}(u)\leq \mathcal{E}_{DP}(\tilde u)$ for every $\tilde u\in H^1(\Omega)$ with $u-\tilde u\in H^1_0(\Omega)$;
\item  $u\in H^1(\Omega;\R^n)$ is a \emph{minimizer of $\mathcal{E}_{V}$} in $\Omega$, if $\mathcal{E}_{V}(u)\leq \mathcal{E}_{V}(\tilde u)$ for every $\tilde u\in H^1(\Omega;\R^n)$ with $u-\tilde u\in H^1_0(\Omega;\R^n)$.
\end{itemize}
Since many results and notions are common for minimizers of $\mathcal{E}_{OP}$, $\mathcal{E}_{DP}$ and $\mathcal{E}_{V}$, from now on we will often replace the indices $OP$, $DP$ and $V$ by $\square$. When $\square=V$ we will assume that the arguments are $\R^n$-valued functions, where $n\ge 1$ is a fixed integer.

For $r>0$, $x_0\in\R^d$ and $u\in H^1(B_r(x_0);\R^n)$ we define the functional $W_0$  by
$$
W_0(u,r,x_0):=\frac{1}{r^d}\int_{B_r(x_0)}|\nabla u|^2\,dx-\frac1{r^{d+1}}\int_{\partial B_r}|u|^2\,d\HH^{d-1}.
$$ 
The \emph{Weiss' boundary-adjusted energy}, associated to $\mathcal E_{OP}$, $\mathcal E_{DP}$ and $\mathcal E_{V}$, is given by
\begin{gather}\label{weiss}
W^{OP}(u,r,x_0)=W_0(u,r,x_0)+\frac{1}{r^d}\big|\{u>0\}\cap B_r(x_0)\big|\\
W^{DP}(u,r,x_0)=W_0(u,r,x_0)+\frac{1}{r^d}\Bigl(\lambda_1\,\big|\{u>0\}\cap B_r(x_0)\big|+\lambda_2\,\big|\{u<0\}\cap B_r(x_0)\big|\Bigr)\\
W^{V}(u,r,x_0)=W_0(u,r,x_0)+\frac{1}{r^d}\big|\{|u|>0\}\cap B_r(x_0)\big|\,.
\end{gather}

By a celebrated result of Weiss (see \cite{Weiss1} for the scalar case and \cite{CaShYe, MaTeVe} for the vectorial case), these functionals are monotone non-decreasing in $r$. In particular, there exists the \emph{density of $u$ at $x_0$} defined as
$$\Theta^\square_u(x_0):=W^\square(u,0,x_0)=\lim_{r\to 0}W^\square(u,r,x_0).$$
Thanks to results of Alt-Caffarelli and Alt-Caffarelli-Friedman (see \cite{AlCa,AlCaFr}), later refined by Caffarelli-Jerison-Kenig and Jerison-Savin (see \cite{CaJeKe,JeSa}), we have that
\begin{itemize}
\item[(OP)] if $2\leq d\leq 4$ and $x_0\in \de \{u>0\}$, then $\ds\Theta^{OP}_u(x_0)=\frac{\omega_d}2$, where $\omega_d$ is the volume of the unit ball in dimension $d$;
\item[(DP)] if $2\leq d \leq 4$, then $x_0\in \de \{u>0\}\setminus \de \{u<0\}$ implies $\ds
\Theta_u^{DP}(x_0)=\lambda_1\frac{\omega_d}2$,

\hspace{2.7cm}$x_0\in \de \{u<0\}\setminus \de \{u>0\}$ implies $\ds\Theta^{DP}_u(x_0)=\lambda_2\frac{\omega_d}2$,

\hspace{2.6cm} $x_0\in \de \{u>0\}\cap \de \{u<0\}$ implies $\ds\Theta^{DP}_u(x_0)=(\lambda_1+\lambda_2)\frac{\omega_d}2$;
\item[(V)] if $d=2$ and $x_0\in \de \{|u|>0\}$, then either $\ds\Theta^{V}_u(x_0)=\frac\pi2$ or $\Theta^{V}_u(x_0)=\pi$. In particular, if $x_0\in \de_{red} \{|u|>0\}$, then $\ds\Theta^{V}_u(x_0)=\frac\pi2$ (see Subsection \ref{ss:class_bu}).  
\end{itemize}

The epiperimetric inequality improves the monotonicity of $W^\square$ to a rate of convergence to the density $\Theta^\square$. Since $W^\square$ has the scaling property 
$$W^\square(u,r,x_0)=W^\square(u_r,1,0),\quad\text{where} \quad u_r(x)=\frac1r u(x_0+rx),$$
we can suppose that $x_0=0$ and $r=1$ and for  the sake of simplicity we set $W^\square(u):=W^\square(u,1,x_0)$.

For the one-phase problem in the scalar case we have the following result. 

{
\begin{theo}[Scalar epiperimetric inequality for the one-phase problem]\label{p:epi_1}
	For every $\alpha>0$ there is $\eps>0$ such that if $c\in H^1(\partial B_1)$ is a non-negative function satisfying $\ds\int_{\partial B_1} c>\alpha$, then there is a function $h\in H^1(B_1)$ such that $h=c$ on $\de B_1$ and
	\begin{equation}\label{e:epi_1}
	W^{OP}(h)-\frac\pi2\leq (1-\eps)\left(W^{OP}(z)-\frac\pi2\right)\,,
	\end{equation}
	where $z\in H^1(B_1)$ is the one-homogeneous extension of the trace of $c$ to $B_1$.
\end{theo}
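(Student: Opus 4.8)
The plan is to work in polar coordinates $(\rho,\theta)$ on $B_1\subset\R^2$ and construct the competitor $h$ explicitly, rather than via compactness. Since the one-homogeneous extension $z$ satisfies $z(\rho,\theta)=\rho\, c(\theta)$ (identifying $\de B_1$ with the circle $\SS^1$), a direct computation gives
\[
W^{OP}(z)-\frac\pi2=\int_{\SS^1}\Bigl((c')^2-c^2\Bigr)\,d\theta+\bigl|\{c>0\}\bigr|-\pi\,,
\]
and we must produce $h$ with $h=c$ on $\de B_1$ whose energy beats a $(1-\eps)$ fraction of this. The first step is a reduction: using the monotonicity of $W^{OP}$ and the classification of blow-ups, if $W^{OP}(z)-\frac\pi2$ is not small we may instead take $h=z$ and win trivially, so it suffices to treat traces $c$ that are $H^1$-close (as functions on $\SS^1$) to the model trace $\theta\mapsto (\cos\theta)^+$, i.e. to $\sin$-type profiles supported on a half-circle of length $\pi$. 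The quantity $\int_{\SS^1}\bigl((c')^2-c^2\bigr)$ is exactly the Rayleigh-type quotient whose minimizer over functions vanishing outside an arc of length $\pi$ is this model, with eigenvalue making the bracket vanish; this is the $2$d source of rigidity.

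The heart of the argument is the competitor construction. The idea, borrowed from Reifenberg–White for minimal surfaces, is to not extend $c$ one-homogeneously all the way, but to interpolate: fix the support arc $I=\{c>0\}$ (of length close to $\pi$), and on the cone over $I$ use a homogeneity parameter slightly larger than $1$ near the origin, glued to the one-homogeneous extension near $\de B_1$; simultaneously one slightly shrinks (or adjusts) the positivity set to reduce the measure term $\big|\{h>0\}\cap B_1\big|$. Concretely I would try $h(\rho,\theta)=\phi(\rho)\,\rho\,c(\theta)+(\text{correction})$, or better, decompose $c$ in Fourier modes on the arc $I$, write $c=c_1+c_\perp$ where $c_1$ is the projection onto the first Dirichlet eigenfunction $\sin$ of $I$ and $c_\perp$ is the orthogonal remainder, extend $c_1$ exactly one-homogeneously (it is harmonic and a true blow-up, so contributes nothing extra), and extend $c_\perp$ with a higher homogeneity degree $\mu>1$ — since $c_\perp$ lives on modes $\ge 2$ of the arc, its one-homogeneous extension has strictly positive bracket, and replacing homogeneity $1$ by $\mu$ contracts this positive energy by a definite factor. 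The gain from the gradient term on $c_\perp$ is quadratic in $\|c_\perp\|$, and one checks that the change in the measure term (from perturbing the support and from the radial profile) is controlled by the same quantity with a small constant, so for $c$ close enough to the model the net change is a strict fraction $(1-\eps)$ of $W^{OP}(z)-\frac\pi2$. The parameter $\eps$ comes from the spectral gap between the first and second Dirichlet eigenvalues of an arc of length $\approx\pi$.

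The main obstacle, and where the real work lies, is handling the measure term $\big|\{h>0\}\cap B_1\big|$ under the interpolation: unlike in the minimal surface setting there is no clean way to compute it, and a careless choice of $h$ can increase the positivity set and destroy the inequality. The key is to arrange that wherever the homogeneity is being raised (thereby increasing the gradient energy relative to the one-homogeneous competitor), the positivity set is correspondingly decreased, so that the two effects have the same sign; quantifying this trade-off — essentially a Taylor expansion of both terms around the model, keeping track that the first-order terms cancel by the criticality of the model and the second-order gradient term dominates — is the crux. A secondary technical point is the case distinction on $|\{c>0\}|$: if the support arc is much shorter than $\pi$ the bracket $\int((c')^2-c^2)$ is large and positive and one wins easily; if it is close to $\pi$ one uses the construction above; one must also treat the degenerate possibility that $c$ is nearly the full-circle profile (density $\pi$, where one-homogeneity is already exact), handled by again taking $h=z$. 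I would organize the proof as: (i) polar formula for $W^{OP}$; (ii) reduction to $c$ near the model via a contradiction/compactness-free dichotomy; (iii) Fourier decomposition on the arc and definition of $h$; (iv) estimate of the Dirichlet energy gain; (v) estimate of the measure-term loss; (vi) combine, choosing $\eps$ from the spectral gap.
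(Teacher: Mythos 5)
Your proposal has two genuine gaps, and the second is the heart of the matter. First, the reduction step fails: Theorem \ref{p:epi_1} is a statement about an \emph{arbitrary} non-negative trace $c$ with $\int_{\partial B_1}c>\alpha$, so there is no minimizer available and ``monotonicity of $W^{OP}$ and the classification of blow-ups'' cannot be invoked to reduce to traces $H^1$-close to the half-plane model. Worse, taking $h=z$ only ``wins'' when $W^{OP}(z)-\frac\pi2\le 0$; whenever this quantity is positive --- for instance when $S=\{c>0\}$ has length close to $2\pi$, so that the measure term alone contributes roughly $\frac\pi2$ --- the choice $h=z$ violates the strict fractional gain, and this is exactly the ``nearly full-circle'' case you propose to dismiss by taking $h=z$. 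The paper treats this regime in Lemma \ref{l:very_large_cones} with the harmonic extension, and it is precisely here that the hypothesis $\int_{\partial B_1}c>\alpha$ (which your argument never uses) enters: it bounds the zeroth Fourier coefficient from below so that the Dirichlet gain $-c_1^2$ beats the measure excess. The remaining configurations also require explicit competitors: the paper decomposes $S$ into small arcs, handled by the harmonic extension multiplied by a radial cut-off vanishing near the origin (Lemma \ref{l:small_cones}), plus at most one big arc.

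The second gap is your treatment of the principal frequency on the big arc. If $|S_{big}|=\pi+\delta$ with $\delta\neq 0$, the first Dirichlet eigenfunction of the arc is \emph{not} the trace of a half-plane solution: its one-homogeneous extension is not harmonic, and its contribution to $W^{OP}(z)-\frac\pi2$ contains a term linear in $\delta$ (Lemma \ref{l:frequenza_1}). Your claim that the first mode ``contributes nothing extra'' is true only for $\delta=0$, and your proposed mechanism --- extending the orthogonal modes with higher homogeneity --- produces a gain that is quadratic in the higher-mode coefficients, so it cannot absorb this first-mode contribution; note also that raising the homogeneity does not shrink the support, hence gives no gain in the measure term (the paper instead cuts off a ball $B_{\rho/2}$ to reduce the measure). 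The missing idea, which is the paper's main new device, is an \emph{internal variation}: deform the opening angle of the cone from $\pi+\delta$ to $\pi+\delta+\eps\,\xi(r)$, with $\xi$ a radial bump supported where the higher modes have been cut off, and choose $\eps=-\delta$ (Lemmas \ref{l:frequenza_2} and \ref{l:frequenza_3}); this is what yields the factor $(1-\rho^3)$ on the principal frequency in Proposition \ref{p:S_big}, and without it (or a substitute) your construction does not close.
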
}

\noindent In the cases $\square=DP$ and $\square=V$ the functional $W^\square(u,x_0,r)$ behaves differently in  points $x_0$ of the free boundary with different densities. We distinguish two cases. 
\begin{enumerate}[-]
	\item The high density points $x_0$, that is the points $x_0$ such that $\ds\Theta^{DP}_u(x_0)=(\lambda_1+\lambda_2)\frac{\pi}2$ or $\Theta^{V}_u(x_0)=\pi$. For the minimizers of $\mathcal F_{DP}$ these are precisely the points of the double-phase boundary. In the case of $\mathcal F_V$ there are several possibilities: the high density points can be isolated, double-phase points or they might be the vertex of an entering cusp. In all these cases the epiperimetric inequality holds at all scales. 
	\item The points of low density,  that is the points $x_0$ such that $\ds\Theta^{DP}_u(x_0)=\lambda_1\frac{\pi}2$, $\ds\Theta^{DP}_u(x_0)=\lambda_2\frac{\pi}2$ or $\ds\Theta^{V}_u(x_0)=\frac\pi2$. In the case of $\mathcal F_{DP}$, these are the points of the one-phase boundaries $\partial \{u>0\}\setminus \partial\{u<0\}$ et $\partial \{u<0\}\setminus \partial\{u>0\}$. In the case of $\mathcal F_{V}$, the points of low density are precisely the points of the reduced free boundary $\partial_{red}\{|u|>0\}$. In these cases the epiperimetric inequality holds only starting from a sufficiently small radius depending on the point $x_0$.
\end{enumerate}

\noindent The precise statements are the following.

{
\begin{theo}[Scalar epiperimetric inequality for the double-phase problem]\label{p:epi_2}
For every $\alpha>0$ there is $\eps>0$ such that
 if $c\in H^1(\partial B_1)$ is a function satisfying  $\ds\int_{\partial B_1} c^+>\alpha$ and $\ds\int_{\partial B_1} c^->\alpha$, then there is a function $h\in H^1(B_1)$ such that $h=c$ on $\de B_1$ and
\begin{equation}\label{e:epi_13}
W^{DP}(h)-(\lambda_1+\lambda_2)\frac{\pi}2\leq (1-\eps)\left(W^{DP}(z)-(\lambda_1+\lambda_2)\frac{\pi}2\right)\,,
\end{equation}
where $z\in H^1(B_1)$ is the one-homogeneous extension of $c$ to $B_1$. 
\end{theo}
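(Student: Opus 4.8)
The plan is to reduce the double-phase case to the one-phase case, treating the positive and negative parts essentially separately. Given the boundary datum $c \in H^1(\partial B_1)$ with $\int_{\partial B_1} c^+ > \alpha$ and $\int_{\partial B_1} c^- > \alpha$, I would first decompose the trace into its positive and negative parts $c^+$ and $c^-$, which are each non-negative traces on $\partial B_1$ satisfying the hypotheses of Theorem \ref{p:epi_1} with the same $\alpha$. The natural idea is to build the competitor $h$ by gluing together a competitor $h^+$ for the one-phase problem associated to the datum $c^+$ (with weight $\lambda_1$) and a competitor $h^-$ associated to $c^-$ (with weight $\lambda_2$). The key structural fact one needs is that for a $1$-homogeneous function $z$, on most of $B_1$ (away from a lower-dimensional set) the positive and negative phases $\{z>0\}$ and $\{z<0\}$ occupy disjoint open cones, so the Dirichlet energy splits as $\int_{B_1}|\nabla z|^2 = \int_{B_1}|\nabla z^+|^2 + \int_{B_1}|\nabla z^-|^2$, and similarly $W^{DP}(z) - (\lambda_1+\lambda_2)\frac\pi2 = \big(W^{OP_{\lambda_1}}(z^+) - \lambda_1\frac\pi2\big) + \big(W^{OP_{\lambda_2}}(z^-) - \lambda_2\frac\pi2\big)$, where $W^{OP_\lambda}$ denotes the one-phase Weiss energy with the measure term weighted by $\lambda$.

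**Next**, I would observe that Theorem \ref{p:epi_1} (or rather its obvious rescaled variant for a measure term with weight $\lambda$, obtained by the scaling $u \mapsto \sqrt{\lambda}\,u$, which turns $\mathcal E_{OP}$ into its $\lambda$-weighted analogue up to an overall constant) applies to each of $c^+$ and $c^-$, producing competitors $h^+$ and $h^-$ with $h^\pm = c^\pm$ on $\partial B_1$ and
$$
W^{OP_{\lambda_i}}(h^\pm) - \lambda_i\tfrac\pi2 \leq (1-\eps_i)\Big(W^{OP_{\lambda_i}}(z^\pm) - \lambda_i\tfrac\pi2\Big),
$$
with $\eps_i = \eps_i(\alpha,\lambda_i)$. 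The delicate point is that $h^+$ and $h^-$ need not have disjoint supports, so simply setting $h = h^+ - h^-$ is not obviously admissible: one must check that $h = c$ on $\partial B_1$ (true, since $c^+$ and $c^-$ have disjoint supports on the sphere) and, crucially, control $W^{DP}(h)$ from above by the sum of the two one-phase energies. Here I would either (i) arrange the construction of $h^+$ so that it vanishes on a neighborhood of $\{z \le 0\}$ — which is plausible if the one-phase competitor from Theorem \ref{p:epi_1} can be taken to be supported in (a slight enlargement of) $\{z>0\}$ — and symmetrically for $h^-$, so that $h^+h^- \equiv 0$ and the energies genuinely add; or (ii) estimate the cross terms and the measure of $\{h>0\}\cup\{h<0\}$ directly, showing that any overlap only decreases the relevant quantities.

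**The main obstacle** is exactly this gluing step: ensuring that the two independently-constructed one-phase competitors can be combined without the measure term $\lambda_1|\{h>0\}| + \lambda_2|\{h<0\}|$ or the Dirichlet term picking up a positive error that destroys the strict gain $(1-\eps)$. Since the explicit competitor in the proof of Theorem \ref{p:epi_1} is built "by hand" (as the authors emphasize, it is a \emph{direct} construction), I expect one has enough control over its support — in particular that $\{h^+ > 0\}$ stays inside $\{z > 0\}$ up to a set of small measure, with the Dirichlet energy outside $\{z>0\}$ negligible — to push the argument through. If the supports are genuinely disjoint one takes $\eps := \min\{\eps_1,\eps_2\}$ and is done; otherwise one absorbs the (small, favorable-signed) overlap terms into the strict inequality, possibly at the cost of shrinking $\eps$. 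Finally I would record that the resulting $h$ lies in $H^1(B_1)$ and agrees with $c$ on $\partial B_1$, completing the proof.
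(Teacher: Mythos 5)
Your overall strategy (split $c$ into $c^\pm$, use the additivity of the Weiss energy over the disjoint cones $\{z>0\}$ and $\{z<0\}$, and run a one-phase argument on each part) is indeed the skeleton of the paper's proof, but the step you flag as ``delicate'' is a genuine gap, not a technicality, and it cannot be closed by invoking Theorem \ref{p:epi_1} as a black box. Theorem \ref{p:epi_1} gives no information whatsoever on the support of the competitor $h^+$; worse, in the regime $|\{c^+>0\}|\ge 2\pi-\eta_0$ (which is compatible with $\int c^->\alpha$, since $c^-$ may be large on a tiny arc) the competitor actually used to prove Theorem \ref{p:epi_1} is the harmonic extension of $c^+$ on \emph{all} of $B_1$, which is strictly positive on the cone over $\{c^->0\}$, so the supports of $h^+$ and $h^-$ genuinely overlap. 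Your fallback (ii) does not rescue this: for $h=h^+-h^-$ the Dirichlet cross term $-2\int\nabla h^+\cdot\nabla h^-$ has no sign, and no additive error can be ``absorbed by shrinking $\eps$'', because the deficit $W^{DP}(z)-(\lambda_1+\lambda_2)\frac\pi2$ may be arbitrarily small; any error must be controlled by a multiple of the deficit itself, which only a coordinated construction provides. Your option (i) is also not available off the shelf: in the critical double-phase regime both arcs $S^+,S^-$ have length close to $\pi$ and nearly exhaust $\partial B_1$, and the one-phase construction enlarges each support toward a half-circle via the internal variation of Lemma \ref{l:frequenza_3}; two independent ``slight enlargements'' can collide.

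This is exactly why the paper does not reduce to Theorem \ref{p:epi_1} but re-runs the construction jointly. In the case where both big arcs are present, Proposition \ref{p:S_big_DP} applies the internal variations to $c_1^+\phi_1^+$ and $c_1^-\phi_1^-$ \emph{simultaneously}, choosing the variation parameter smaller ($\|\xi_\rho\|_{L^\infty}\le\frac14$, see \eqref{tildeCchoice_double}) and letting both supports open only into one fixed connected component of $\partial B_1\setminus(S^+\cup S^-)$, which forces $\spt(z_\eps^+)\cap\spt(z_\eps^-)=\emptyset$; the high frequencies and the small arcs are handled by the cut-off of Lemmas \ref{l:harm_S_big} and \ref{l:small_cones}, whose supports avoid $B_\rho$ where the variation lives. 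In the unbalanced case $|S^+|\ge 2\pi-\eta_0$, $0<|S^-|\le\eta_0$, the paper replaces the global harmonic extension by a competitor equal to $z^+$ outside $B_{\rho_1}$ and harmonic only inside $B_{\rho_1}$, precisely because $h^-$ (from Lemma \ref{l:small_cones}) vanishes on $B_{\rho_1}$ and is supported in the cone over $S^-$, so disjointness holds by construction. In short, you have correctly identified where the difficulty lies, but the content of the theorem in the double-phase case is precisely the coordinated competitor construction that your proposal leaves as an expectation.
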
}

{
\begin{theo}[Epiperimetric inequality for vector-valued functions]\label{p:epi_vec}
Let $n\ge 1$ and $B_1\subset\R^2$. For every $\delta_0>0$ there is $\eps>0$ such that 
\begin{itemize}
\item[(i)] if $c\in H^1(\partial B_1;\R^n)$ and $\ds\big|\{|c|>0\}\cap \de B_1\big|\leq 2\pi-\delta_0$, then there is a function $h\in H^1(B_1;\R^n)$ such that $h=c$ on $\de B_1$ and
\begin{equation}\label{e:epi_vec_low_density}
W^V(h)-\frac\pi2\leq (1-\eps)\left(W^V(z)-\frac\pi2\right)\,;
\end{equation}
\item[(ii)] if $c\in H^1(\partial B_1;\R^n)$, then there is a function $h\in H^1(B_1;\R^n)$ such that $h=c$ on $\de B_1$ and
\begin{equation}\label{e:epi_vec}
W^V(h)-\pi\leq (1-\eps)\,(W^V(z)-\pi)\,.
\end{equation}
\end{itemize}
In both cases $z\in H^1(B_1;\R^n)$ is the one-homogeneous extension of $c$ in $B_1$.
\end{theo}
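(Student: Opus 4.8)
The plan is to reduce everything to the one-dimensional trace $c$ on $S^1=\partial B_1$. In polar coordinates the one-homogeneous extension $z(r\theta)=r\,c(\theta)$ satisfies $W_0(z)=\frac12\int_{S^1}\big(|\partial_\theta c|^2-|c|^2\big)$ and $|\{|z|>0\}\cap B_1|=\frac12|\{|c|>0\}\cap S^1|$; more generally, for any competitor of the form $h(r\theta)=r\,\phi(\theta)+\eta(r)\,\psi(\theta)$ one has closed formulas for $\int_{B_1}|\nabla h|^2$ and $\int_{\partial B_1}|h|^2$, quadratic in $\psi$ with the $\eta$-dependence explicit. Set $\Theta=\tfrac\pi2$ in case~(i) and $\Theta=\pi$ in case~(ii), and $\mathcal D:=W^V(z)-\Theta$. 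The argument is organized by the size of $\mathcal D$. If $\mathcal D\le 0$ one takes $h=z$, so that $W^V(h)-\Theta=\mathcal D\le(1-\eps)\mathcal D$; hence assume $\mathcal D>0$.

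The heart of the matter is the regime $0<\mathcal D<\tau$, $\tau$ a fixed small constant. One first proves, directly from the formula for $W^V(z)$ and the sharp Poincar\'e inequality on the arcs of $\{|c|>0\}\cap S^1$, a \emph{quantitative rigidity}: when $\mathcal D$ is small, $c$ is $H^1(S^1)$-close to the trace $\phi$ of one of the one-homogeneous minimizing cones classified in Subsection~\ref{ss:class_bu} --- a half-plane solution $(x\cdot\nu)_+\xi$ in case~(i), and in case~(ii) a linear map $x\mapsto Ax_1+Bx_2$, a broken solution $(x\cdot\nu)_+\xi_1+(x\cdot\nu)_-\xi_2$, or a cusp-type cone. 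One fixes $\phi$ so that $\psi:=c-\phi$ is $L^2(S^1)$-orthogonal to the finite-dimensional space of infinitesimal deformations of the family of cones through $\phi$. Since $\phi$ is harmonic on its positivity set, the operator $-\partial_\theta^2$ on the arcs of $\{|\phi|>0\}\cap S^1$ has a spectral gap around the eigenvalue $1$ (the homogeneity-$1$ mode); so, splitting $\psi$ into its parts below and above the gap and extending each with its own optimal homogeneity $\ne 1$, the $W_0$-energy of the error drops strictly, with gain $\gtrsim\int_{S^1}|\psi|^2\gtrsim\mathcal D$. Simultaneously --- this is the ingredient adapted from Reifenberg and White and ``substantially modified'' to absorb the measure term --- one performs a surgery in a collar $B_1\setminus B_{1-\rho}$: the positivity set of $h$ is re-routed so that inside $B_{1-\rho}$ it agrees with a slightly dilated copy of the cone $\{|\phi|>0\}$, interpolating back to $\{|c|>0\}\cap S^1$ only across the collar. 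Because $|\phi|$ vanishes linearly at its free boundary, forcing $h$ to vanish near $\partial\{|\phi|>0\}$ costs an extra Dirichlet term that is $o(\mathcal D)$, and the difference between $|\{|h|>0\}\cap B_1|$ and the measure of the cone is again $o(\mathcal D)$; adding this to the homogeneity gain gives $W^V(h)-\Theta\le(1-\eps)\mathcal D$.

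For $\mathcal D\ge\tau$ the excess is bounded below by a fixed constant and a finite repertoire of cruder, more robust competitors suffices. When $W_0(z)$ is large, the harmonic extension $h_{\mathrm{harm}}$ of $c$ works: a Fourier computation gives $W_0(h_{\mathrm{harm}})\le\tfrac23 W_0(z)$ whenever $W_0(z)>0$ (it damps every Fourier mode of order $\ge 2$ by a fixed factor) and $W_0(h_{\mathrm{harm}})\le W_0(z)$ otherwise, and since $0\le|\{|u|>0\}\cap B_1|\le\pi$ is only a bounded perturbation of $W_0$, this beats $(1-\eps)\mathcal D$. In the remaining bounded range --- $\mathcal D$ between two fixed constants, $c$ not necessarily close to a cone --- one combines $h_{\mathrm{harm}}$ with explicit surgeries: collapsing $h$ to $0$ on an inner ball when $\|c\|_{L^2(S^1)}$ is small, and otherwise interpolating between $c$ on $\partial B_1$ and a suitable cone used in the interior (in case~(i) possibly a cone of the ``wrong'' density $\pi$, which is only relevant when the support bound $2\pi-\delta_0$ is nearly saturated). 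Each of these beats $(1-\eps)\mathcal D$ for an appropriate $\eps$, and the final $\eps$ is the minimum of the finitely many constants so produced.

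The main obstacle is the positivity/measure surgery in the core regime and its interplay with the $\psi$-perturbation: the collar width $\rho$, the homogeneities used to extend $\psi$, and the Dirichlet loss near $\partial\{|\phi|>0\}$ must be tuned so that the \emph{net} effect on $W^V$ is a strict fraction of the gain $\gtrsim\int_{S^1}|\psi|^2$; in particular the measure term cannot be linearized, since $|u|$ is not differentiable where $u=0$, so it must be estimated directly. In the vectorial case this is genuinely harder than in the scalar one because the family of optimal density-$\pi$ cones is large and non-compact: the linear maps form a whole linear space (full support, free boundary of measure zero), the broken solutions lie above them (full support, two corner rays), and the cusp cones sit on the boundary of the family; so the choice of $\phi$, the relevant space of zero-modes, and the geometry of $\{|\phi|>0\}$ about which the surgery is performed all have to be analysed type by type. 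Finally, the hypothesis $|\{|c|>0\}\cap\partial B_1|\le 2\pi-\delta_0$ in~(i) is exactly what decouples the low-density analysis from these density-$\pi$ cones --- the trace of a linear map has full support and density $\pi$, and (i) is false for it --- and it keeps the arcs of $\{|c|>0\}\cap S^1$, hence the Poincar\'e constants, quantitatively under control.
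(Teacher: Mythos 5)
There is a genuine gap, and it sits exactly where you locate ``the heart of the matter''. Your core regime $0<W^V(z)-\Theta<\tau$ is built on the claim that a small deficit forces $c$ to be $H^1$-close to the trace of one of the minimizing cones of Lemma \ref{l:class_bu_V}; this quantitative rigidity is false for these functionals, because the measure term can cancel the Dirichlet excess at the level of the deficit. Concretely, in case (i) take $S\subset\partial B_1$ a single arc of length $\ell=\pi-1$ and $c=t\,\phi_1$, with $\phi_1$ the first Dirichlet eigenfunction of $S$ and $t>0$. Then $W^V(z)-\frac\pi2=\frac{t^2}{2}\bigl(\frac{\pi^2}{\ell^2}-1\bigr)-\frac{\pi-\ell}{2}$, and tuning $t$ (of order one) makes this equal to any prescribed small $\sigma>0$, while $c$ remains at a fixed positive $L^2(\partial B_1)$-distance, uniform in $\sigma$, from every half-plane trace $\xi\max\{0,x\cdot e\}$: its support has length $\pi-1$ and its amplitude is of order one, so no choice of $\xi$, $e$ can approximate it. The same trade-off occurs in case (ii), e.g.\ two disjoint arcs of length $\frac{3\pi}{4}$ each carrying a tuned multiple of its first eigenfunction: the deficit $W^V(z)-\pi$ can be made arbitrarily small and positive while the support misses a set of length $\frac\pi2$, so $c$ is far from every linear map, two-plane or cusp cone, whose supports fill $\partial B_1$ up to measure zero. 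Such data fall squarely in your core regime (the deficit is small, so your crude competitors are not invoked), but the first step fails: there is no nearby cone $\phi$, no finite-dimensional space of zero modes to project away, and the spectral-gap-plus-collar-surgery machinery never gets started. This is a structural difference with the obstacle-problem epiperimetric inequalities you are modelling the argument on: here smallness of the Weiss deficit carries no localization information about the boundary datum, so an unconditional statement like Theorem \ref{p:epi_vec} cannot be reduced to a perturbation analysis around the cones.

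For comparison, the paper never uses (nor could it use) closeness to a cone. The gain is produced arcwise on the support of $c$ by two explicit mechanisms: on every connected arc of length at most $\pi-\delta_0$ the full energy $W_0+\lambda\,|\{\cdot>0\}|$ is decreased by a fixed factor via a radial cut-off (Lemma \ref{l:small_cones}), using only that the first eigenvalue of a short arc exceeds $1$ --- this is precisely what handles the counterexamples above; on the single long arc the datum is split into its principal mode, treated by an internal variation that moves the aperture of the supporting cone towards $\pi$ (Lemmas \ref{l:frequenza_1}--\ref{l:frequenza_3}), and the higher modes, treated by the harmonic extension plus the same cut-off (Lemma \ref{l:harm_S_big}); case (ii) then follows by a case distinction on the arcs, using Proposition \ref{p:S_big_DP} and the nearly-full-support Fourier computation. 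Your large-deficit regime (harmonic extension damping, $W_0(h_{\mathrm{harm}})\le\frac23 W_0(z)$) is fine as far as it goes, and the intermediate ``bounded range'' is only sketched, but the decisive missing ingredient is a replacement for the false rigidity step --- in practice, an arcwise construction of the type the paper carries out.
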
}

As a consequence of the epiperimetric inequalities we obtain the uniqueness of the blow-up limits and the regularity of the free boundary following a standard procedure (see \cite{FoSp}). For the next theorem we recall the standard notation $ u_{r,x_0}(x):=\frac1ru(x_0+rx)$ and $u_r(x):=u_{r,0}(x)$.

\begin{theo}[Uniqueness of the blow-up limits]\label{t:uniq_blowup} Let $\Omega\subset \R^2$ be a given open set.
\begin{enumerate}[(i)]
\item[(OP)] Suppose that $u\in H^1(\Omega)$ is a minimizer of $\mathcal E_{OP}$ in $\Omega$ and $x_0\in \partial\{u>0\}\cap\Omega$. Then there is a unit vector $e=e_{x_0}\in\partial B_1$ such that $u_r$ converges,  as $r\to 0$, to the function $h(x):=\max\{0,e\cdot x\}$ locally uniformly and in $H^1_{loc}(\R^2)$.
\item[(DP)] Suppose that $u\in H^1(\Omega)$ is a minimizer of $\mathcal E_{DP}$ in $\Omega$ and $x_0\in \partial\{|u|>0\}\cap\Omega$. Then there is a unit vector $e=e_{x_0}\in\partial B_1$ such that $u_r$ converges locally uniformly and in $H^1_{loc}(\R^2)$ to the function $h$, defined as: 
\begin{equation*}
\begin{array}{rl}
h(x)=\lambda_1\max\{0,e\cdot x\},&\text{if}\quad x_0\in\partial\{u>0\}\setminus\partial\{u<0\};\\
h(x)=\lambda_2\min\{0,e\cdot x\},&\text{if}\quad x_0\in\partial\{u<0\}\setminus\partial\{u>0\};\\
h(x)=\mu_1\max\{0,e\cdot x\}+\mu_2\min\{0,e\cdot x\},&\text{if}\quad x_0\in\partial\{u>0\}\cap\partial\{u<0\},
\end{array}
\end{equation*}
where $\mu_1\ge\lambda_1$, $\mu_2\ge\lambda_2$ and $\mu_1^2-\mu_2^2=\lambda_1^2-\lambda_2^2$.
\item[(V)] Suppose that $u\in H^1(\Omega;\R^n)$ is a minimizer of $\mathcal E_{V}$ in $\Omega$ and $x_0\in \partial\{|u|>0\}\cap\Omega$. Then there is a function $h:\R^2\to\R^n$ such that $u_r$ converges locally uniformly and in $H^1_{loc}(\R^2;\R^n)$ to $h$, where 
\begin{itemize}
\item if $\Theta^V_u(x_0)={\pi}/2$, then $h(x)=\xi\max\{0,e\cdot x\}$, where $\xi\in\R^n$, $|\xi|=1$, $e\in\mathbb{S}^1$;
\item if $\Theta^V_u(x_0)=\pi$, then $h(x)=(\xi^1\,h_{e_1}(x),\dots, \xi^n\,h_{e_n}(x))$, where $h_e(x)=e\cdot x$, with $e\in\mathbb{S}^1$, and $\xi^i\in \R$. Moreover, if there exists $i\neq j\in \{1,\dots,n\}$ such that $e_i\neq e_j$ then $x_0$ is isolated in $\de\{|u|>0\}\cap \Omega$. In particular, if $x_0$ is not isolated in $\de\{|u|>0\}\cap \Omega$ and $\Theta^V_u(x_0)=\pi$, then the blow-up in $x_0$ is of the form $h=\xi\,h_e$, for some $e\in \mathbb{S}^1$ and $\xi \in \R^n$.
\end{itemize}
\end{enumerate}
\end{theo}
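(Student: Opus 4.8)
The plan is to read Theorem~\ref{t:uniq_blowup} off the epiperimetric inequalities (Theorems~\ref{p:epi_1}, \ref{p:epi_2} and~\ref{p:epi_vec}) together with the Weiss monotonicity formula, following the now-standard scheme of~\cite{FoSp}. Fix $x_0$ as in the statement, write $u_r=u_{r,x_0}$, $W^\square(u,r):=W^\square(u,r,x_0)$ and $\Theta:=\Theta^\square_u(x_0)=\lim_{r\to0}W^\square(u,r)$, and recall that $r\mapsto W^\square(u,r)$ is non-decreasing and that $\Theta$ takes the explicit value recalled in the Introduction. First I would record the soft preliminaries: for $r$ small $u_r$ is a minimizer of $\mathcal E_\square$ in $B_R$ for every fixed $R$, so by the uniform Lipschitz and energy estimates for minimizers any sequence $r_k\to0$ admits a subsequence along which $u_{r_k}$ converges in $H^1_{loc}(\R^2;\R^n)$ and locally uniformly to a one-homogeneous global minimizer $u_0$ with $W^\square(u_0,1)=\Theta$, which by the classification of Subsection~\ref{ss:class_bu} is one of the explicit profiles in the statement. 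Everything then reduces to showing that this $u_0$ does not depend on the chosen sequence.

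The first genuine step, and the one I expect to be the main obstacle, is to check that for all small $r$ the trace $c_r:=u_r|_{\partial B_1}$ satisfies the hypothesis of the relevant epiperimetric inequality. At the high-density points ($\Theta=(\lambda_1+\lambda_2)\frac{\pi}2$ for $\mathcal E_{DP}$, $\Theta=\pi$ for $\mathcal E_V$) this is immediate: Theorem~\ref{p:epi_vec}(ii) has no hypothesis, and for Theorem~\ref{p:epi_2} it suffices, by $L^1(\partial B_1)$-continuity of $c\mapsto\int c^{\pm}$ and since every subsequential blow-up is a profile $\mu_1\max\{0,e\cdot x\}+\mu_2\min\{0,e\cdot x\}$ with $\mu_1\ge\lambda_1$, $\mu_2\ge\lambda_2$, to take $\alpha:=\min\{\lambda_1,\lambda_2\}$. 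At the low-density points ($x_0\in\partial\{u>0\}$ for $\mathcal E_{OP}$, a one-phase boundary point for $\mathcal E_{DP}$, $x_0\in\partial_{red}\{|u|>0\}$ for $\mathcal E_V$) every subsequential blow-up $u_0$ is a suitably normalized half-plane solution, whose trace is non-degenerate in the required sense ($\int_{\partial B_1}u_0>0$ for~\ref{p:epi_1}, $\big|\{|u_0|>0\}\cap\partial B_1\big|=\pi$ for~\ref{p:epi_vec}(i), with~\ref{p:epi_1} again at a one-phase point of $\mathcal E_{DP}$ modulo the negligible opposite-sign part). Transferring this from $u_0$ to $c_r$ requires, besides the classification, the density and non-degeneracy estimates for minimizers (\cite{AlCa,AlCaFr,MaTeVe}) to control the level sets of $u_r$ near $\partial B_1$; this is exactly where, in the low-density cases, the hypothesis — hence the epiperimetric inequality — becomes available only below a point-dependent threshold $r_0=r_0(x_0)>0$.

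Granting this, set $e(r):=W^\square(u,r)-\Theta\ge0$, a non-decreasing function with $e(0^+)=0$. For $r<r_0$, let $h_r$ be the competitor produced by the relevant epiperimetric inequality for $c_r$ and let $z_r$ be the one-homogeneous extension of $c_r$. Minimality of $u_r$ gives $W^\square(u_r,1)\le W^\square(h_r,1)$ and $W^\square(z_r,1)\ge W^\square(u_r,1)\ge\Theta$, so together with the epiperimetric inequality we get $e(r)\le(1-\eps)\big(W^\square(z_r,1)-\Theta\big)$, whence $W^\square(z_r,1)-W^\square(u_r,1)\ge\frac{\eps}{1-\eps}\,e(r)$. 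On the other hand the differential form of the Weiss monotonicity formula gives $\frac{d}{dr}W^\square(u,r)\ge\frac{c}{r}\int_{\partial B_1}|\partial_\nu u_r-u_r|^2\,d\HH^1$ together with $\int_{\partial B_1}|\partial_\nu u_r-u_r|^2\,d\HH^1\ge c'\big(W^\square(z_r,1)-W^\square(u_r,1)\big)$ (see \cite{Weiss1,Weiss2,CaShYe,MaTeVe}); combining, $e'(r)\ge\frac{\gamma}{r}\,e(r)$ with $\gamma=\gamma(\eps)>0$, hence the decay $e(r)\le e(r_0)(r/r_0)^{\gamma}$ for $r<r_0$. I would then conclude as in \cite{FoSp}: since $\big\|\frac{d}{dr}\big(u_r|_{\partial B_1}\big)\big\|^2_{L^2(\partial B_1)}=\frac{1}{r^2}\int_{\partial B_1}|\partial_\nu u_r-u_r|^2\,d\HH^1\le\frac{C}{r}\,e'(r)$, the Cauchy--Schwarz inequality over the dyadic annuli $\{2^{-k-1}r_0<r<2^{-k}r_0\}$ and the decay of $e$ yield $\int_0^{r_0}\big\|\frac{d}{dr}\big(u_r|_{\partial B_1}\big)\big\|_{L^2(\partial B_1)}\,dr<\infty$, so $c_r$ converges, as $r\to0$, to a single limit $c_0$ in $L^2(\partial B_1;\R^n)$. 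Every subsequential blow-up $u_0$ has trace $c_0$ and is one-homogeneous, hence $u_0(x)=|x|\,c_0(x/|x|)$ is uniquely determined; therefore $u_r\to u_0$ as $r\to0$ — not only along subsequences — in $H^1_{loc}$ and locally uniformly, and $u_0$ has the claimed form.

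Finally, for case~(V) at a point with $\Theta=\pi$ I would read the extra structural statement off the limit. With $u_0=(\xi^1 h_{e_1},\dots,\xi^n h_{e_n})$ one has $\{|u_0|=0\}=\bigcap_{\{i:\,\xi^i\ne0\}}\{e_i\cdot x=0\}$; if some $e_i\ne e_j$ — equivalently, the $e_i$ are not all parallel — this intersection is $\{0\}$, so $|u_0|\ge\delta>0$ on the closed annulus $\overline{B_2}\setminus B_{1/2}$. Since $u_r\to u_0$ locally uniformly as $r\to0$, it follows that $|u|>0$ on the annulus $\{r/2<|x-x_0|<2r\}$ for every sufficiently small $r$; as $r$ ranges over a small interval these annuli cover a punctured neighborhood $B_\rho(x_0)\setminus\{x_0\}$ of $x_0$, so $x_0$ is isolated in $\partial\{|u|>0\}\cap\Omega$. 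The contrapositive gives that at a non-isolated high-density point the blow-up is $\xi\,h_e$ for some $e\in\mathbb{S}^1$ and $\xi\in\R^n$. Apart from the first step, the remaining ingredients — the precise Weiss differential identity (including the contribution of the measure term) and the finite-length argument — are standard once the epiperimetric inequalities are available, so in writing the proof I would verify rather than reprove them.
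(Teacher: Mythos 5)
Your proposal is correct and follows essentially the same route as the paper: the verification of the epiperimetric hypotheses at small scales (your ``main obstacle'') is exactly the content of Lemma~\ref{l:uniform_radius} (done there by a compactness/classification argument, with the non-degeneracy of Lemma~\ref{e:properties_of_min} handling the scalar cases), and the rest — the differential inequality from \eqref{e:Weiss_monotonicity} plus the epiperimetric inequality giving the decay of $W^\square(u,r)-\Theta$, the Cauchy--Schwarz/Dini estimate for the traces, and the classification via Lemma~\ref{l:class_bu_V} and \cite{AlCa,AlCaFr} — is precisely Propositions~\ref{p:improvement} and~\ref{p:decay} and the paper's proof of Theorem~\ref{t:uniq_blowup}. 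Your covering argument for the isolation of high-density vectorial points with $e_i\neq e_j$ is just the contrapositive of the paper's rescaling-contradiction argument, so there is no substantive difference.
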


\begin{theo}[Regularity of the free boundary]\label{t:main}
Let $\Omega\subset\R^2$ be an open set. There exists a universal constant $\alpha>0$ such that:\begin{enumerate}
\item[(OP)] if $u\in H^1(\Omega)$ is a minimizer of $\mathcal E_{OP}$ in $\Omega$, then $\de\{u>0\}\cap \Omega$ is locally a graph of a $C^{1,\alpha}$ function;
\item[(DP)] if $u\in H^1(\Omega)$ is a minimizer of $\mathcal E_{DP}$ in $\Omega$, then both $\de\{u>0\}\cap\Omega$ and $\de\{u<0\}\cap\Omega$ are locally graphs of $C^{1,\alpha}$ functions;
\item[(V)] if $u\in H^1(\Omega;\R^n)$ is a minimizer of $\mathcal E_{D}$ in $\Omega$, then the reduced free boundary $\de_{red}\,\{|u|>0\}\cap \Omega$ is locally a graph of a $C^{1,\alpha}$ function. 
\end{enumerate}
\end{theo}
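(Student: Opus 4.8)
\emph{Overall strategy.} The genuinely new ingredients — the epiperimetric inequalities of Theorems~\ref{p:epi_1}, \ref{p:epi_2} and \ref{p:epi_vec} and the blow-up classification/uniqueness of Theorem~\ref{t:uniq_blowup} — are already available, so the plan is to run the classical scheme (Weiss \cite{Weiss2}, Focardi--Spadaro \cite{FoSp}, Garofalo--Petrosyan--Garcia \cite{GaPeVe}) that converts an epiperimetric inequality into a $C^{1,\alpha}$ parametrization of the free boundary. The argument is local around a fixed free boundary point, and the three cases (OP), (DP), (V) differ only in which epiperimetric inequality one feeds in and in a preliminary reduction of (DP) to (OP).

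\emph{Step 1: entering the epiperimetric regime.} Fix $x_0$ in the relevant part of the free boundary: any point of $\partial\{u>0\}\cap\Omega$ in case (OP); a point of $\partial\{u>0\}\setminus\partial\{u<0\}$, of $\partial\{u<0\}\setminus\partial\{u>0\}$, or a double-phase point in case (DP); a point of $\partial_{red}\{|u|>0\}\cap\Omega$ in case (V). By compactness of blow-up sequences, the density values and classification recalled in the Introduction, and Theorem~\ref{t:uniq_blowup}, the rescalings $u_{r,x_0}$ converge, as $r\to0$, uniformly and in $H^1_{loc}$ (with the free boundaries converging in Hausdorff distance), to a half-plane/two-plane model $h_{x_0}$. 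Consequently, for $r\le r_0(x_0)$ the boundary trace $c:=u_{r,x_0}|_{\partial B_1}$ satisfies the hypothesis of the corresponding epiperimetric inequality, with a \emph{universal} constant: $c\ge0$ with $\int_{\partial B_1}c$ bounded below in case (OP) (the trace of $\max\{0,e\cdot x\}$ has a positive rotation-invariant integral), $\int_{\partial B_1}c^\pm$ bounded below at double-phase points in case (DP), and $|\{|c|>0\}\cap\partial B_1|\le 2\pi-\delta_0$ in case (V) (the model has angular opening $\pi$, so one may take $\delta_0=\pi$). At a one-phase point of (DP) one first observes that, since $u$ is continuous with $u(x_0)=0$ and $x_0\notin\partial\{u\lessgtr0\}$, $u$ has a sign in a ball around $x_0$ and there minimizes a constant multiple of $\mathcal E_{OP}$; this reduces those points to case (OP). Finally, the radius $r_0(x_0)$ enters only through a \emph{quantitative} closeness of $u_{r,x_0}$ to $h_{x_0}$ which, once achieved at one scale, propagates to all smaller scales with constants depending only on that threshold; hence all constants below are uniform on a neighborhood of any fixed free boundary point, including in the low-density cases of (DP) and (V) where the epiperimetric inequality is only available below a point-dependent radius.

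\emph{Step 2: decay of the Weiss energy and rate of convergence.} Set $e(r):=W^\square(u,r,x_0)-\Theta^\square\ge0$. Weiss' monotonicity formula expresses $e'(r)$ as a positive multiple of $\tfrac1r\int_{\partial B_1}|\partial_r u_{r,x_0}-u_{r,x_0}|^2\,d\HH^{1}$, which in turn controls $\tfrac1r\big(W^\square(z)-W^\square(u_{r,x_0})\big)$, $z$ being the $1$-homogeneous extension of $c$; combining this with the minimality of $u$ (so $W^\square(u_{r,x_0})\le W^\square(h)$ for the epiperimetric competitor $h$) and with \eqref{e:epi_1}, \eqref{e:epi_13}, \eqref{e:epi_vec_low_density} according to the case, one obtains
\[
e'(r)\ \ge\ \frac{c_0}{r}\,e(r)\qquad\text{for all }r\le r_0(x_0),
\]
with $c_0=c_0(\eps)>0$, and hence $e(r)\le C\,r^{c_0}$. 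Plugging this decay back into the monotonicity formula makes $r\mapsto u_{r,x_0}|_{\partial B_1}$ a Cauchy net in $L^2(\partial B_1)$ with an algebraic rate, and the nondegeneracy and uniform Lipschitz estimates for minimizers upgrade this to
\[
\|u_{r,x_0}-h_{x_0}\|_{L^\infty(B_1)}+\|\nabla u_{r,x_0}-\nabla h_{x_0}\|_{L^2(B_1)}\ \le\ C\,r^{\alpha}
\]
for a universal $\alpha>0$; in particular this reproves, with a rate, the uniqueness of the blow-up.

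\emph{Step 3: Hölder dependence of the blow-up, and conclusion.} Comparing the expansions of Step 2 at two nearby free boundary points $x_0,x_1$ and using the triangle inequality together with the decay shows that the direction of the blow-up — the unit vector $e_{x_0}$, and at double-phase points of (DP) also the coefficients $\mu_1,\mu_2$ — depends on the base point in a Hölder-continuous way, $|e_{x_0}-e_{x_1}|\le C|x_0-x_1|^{\alpha}$. On the other hand, the $L^\infty$ rate of Step 2 combined with nondegeneracy localizes the free boundary in the $Cr^{1+\alpha}$-neighborhood of the affine hyperplane $x_0+e_{x_0}^{\perp}$ at each scale $r$. A De Giorgi--Reifenberg type iteration on dyadic scales (as in \cite{FoSp,GaPeVe}) then turns these two facts into the statement that the relevant free boundary is, locally, the graph of a $C^{1,\alpha}$ function whose normal at $x_0$ is $e_{x_0}$; at a double-phase point of (DP) the sets $\{u>0\}$ and $\{u<0\}$ agree locally with complementary half-spaces, so $\partial\{u>0\}$ and $\partial\{u<0\}$ coincide near such a point and form the same $C^{1,\alpha}$ curve. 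The main points requiring care are precisely (a) the uniformity of the threshold radius $r_0$ on a neighborhood in the low-density cases of (DP) and (V), which follows from the self-improving nature of the closeness to the model noted in Step~1, and (b) the passage in Step~3 from algebraic flatness decay plus Hölder-continuous blow-up directions to an honest $C^{1,\alpha}$ parametrization, which relies on the nondegeneracy of minimizers and a careful dyadic iteration.
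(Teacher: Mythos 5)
Your scheme for (OP) and (V) is essentially the paper's: epiperimetric inequality fed into Weiss' identity \eqref{e:Weiss_monotonicity} gives the differential inequality and the decay $W-\Theta\le Cr^\gamma$ (Proposition \ref{p:improvement}), then the $L^2$ Cauchy estimate (Proposition \ref{p:decay}), then H\"older continuity of the blow-up direction via the triangle inequality with $r=|x_0-y_0|^{1-\alpha}$ and the Lipschitz bound. Two minor differences: for (V) the paper obtains the uniform threshold radius on compact subsets of $\Gamma_{\pi/2}$ by a compactness/contradiction argument (Lemma \ref{l:uniform_radius}) rather than by your ``flatness propagates to smaller scales'' claim, which you assert but do not prove; and in the final step the paper does not run a dyadic De Giorgi--Reifenberg iteration but proves a two-sided cone condition \eqref{e:eisnormal} by compactness and then cites \cite[Proposition 4.10]{FoSp}. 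These are acceptable variants.

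The genuine gap is in (DP). You dispose of it by asserting that ``at a double-phase point the sets $\{u>0\}$ and $\{u<0\}$ agree locally with complementary half-spaces, so $\partial\{u>0\}$ and $\partial\{u<0\}$ coincide near such a point.'' This is exactly what is not known a priori: a point $x_0\in\Gamma_{DP}=\partial\{u>0\}\cap\partial\{u<0\}$ may be approached by one-phase points of $\Gamma_+=\partial\{u>0\}\setminus\partial\{u<0\}$, where the density is $\lambda_1\pi/2$ rather than $(\lambda_1+\lambda_2)\pi/2$ and the blow-up gradient is $\lambda_1$ rather than $\mu_1(x_0)\ge\lambda_1$. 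Your Step 3 compares blow-ups at two nearby points using the decay of Step 2, but that comparison only yields H\"older continuity of the normal within a single density stratum; it says nothing about the regularity of $\partial\{u>0\}$ across the transition between $\Gamma_+$ and $\Gamma_{DP}$, which is where the claimed $C^{1,\alpha}$ statement for the whole boundary $\partial\{u>0\}\cap\Omega$ must be proved. The paper's route is different precisely here: it shows the blow-up coefficient $\mu_1$ is H\"older on $\Gamma_{DP}$, proves the matching condition $\mu_1(x_0)=\lambda_1$ at points of $\Gamma_{DP}$ that are limits of $\Gamma_+$ (Lemma \ref{l:visc}, via uniformly Lipschitz rescalings converging to a viscosity solution), and then regards $u_+$ as a viscosity solution of $\Delta u_+=0$, $|\nabla u_+|=\mu_1$ with H\"older continuous $\mu_1$, concluding by De Silva's regularity theorem --- explicitly noting that the variational result (OP) cannot be invoked at this stage. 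Your proposal contains no substitute for this matching-and-viscosity step, so as written the (DP) case is not proved.
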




\noindent It is important to notice that the free boundary in the vectorial case may in a cusp, indeed we have the following example.

\begin{exam}\label{ex:cusp_sing}
There exists a local minimizer $u:\R^2\to\R^2$ of the functional $\mathcal E_V$ for which 
\begin{enumerate}[(1)]
\item $\Omega_u=\{|u|>0\}$ is a connencted open set; 
\item there is a point $x_0\in\partial\Omega_u$ of density $\Theta_u^V(x_0)=\pi$. 
\end{enumerate} 
This is a completely different behavior with respect to the one-phase and double-phase problems. For the one-phase problem the points of density $\omega_d$ are not admitted in any dimension. On the other hand, for the double-phase problem, if the point $x_0\in\partial\{|u|>0\}$ is of density $\pi$, then the two sets $\{u>0\}$ and $\{u<0\}$ meet in $x_0$ and they are both $C^{1,\alpha}$ regular.   
\end{exam}

\noindent Finally we remark that Theorems \ref{t:uniq_blowup} and \ref{t:main} remain true if we replace the measure terms in our functionals by a H\"older continuous weight function  $q:\Omega\to\R^+$, that is we define 
\begin{gather}
\mathcal{E}^q_{OP}(u):=\int_\Omega \left[|\nabla u|^2+q(x)\,\chi_{\{u>0\}}\right]\,dx , \notag \\
\mathcal{E}^q_{DP}(u):=\int_\Omega \left[ |\nabla u|^2+q_1(x)\,\chi_{\{u>0\}}+q_2(x)\,\chi_{\{u<0\}}\right]\,dx ,\notag \\
\mathcal{E}^q_{V}(u):=\int_\Omega \left[|\nabla u|^2+q(x)\,\chi_{\{|u|>0\}}\right]\,dx ,\notag
\end{gather}
where $\chi_A$ denotes the characteristic function of a set $A$.
The minimizers of these functionals are in fact almost minimizers of the original functionals $\mathcal{E}_\square$, so that we can prove the following

\begin{theo}[H\"older continuous weight functions]\label{c:Q_funct}
Let $\Omega\subset\R^2$ be an open set and  $q,q_1,q_2\in C^{0,\gamma}(\Omega;\R^+)$ be H\"older continuous functions such that $q, q_1, q_2\ge c_q>0$, where $c_q$ is a given constant. There exists a constant $\alpha>0$ such that:
\begin{enumerate}
\item[(OP)] if $u\in H^1(\Omega)$, $u\ge 0$, is a minimizer of $\mathcal E_{OP}^q$ in $\Omega$, then $\de\{u>0\}\cap \Omega$ is locally a graph of a $C^{1,\alpha}$ function;
\item[(DP)] if $u\in H^1(\Omega)$ is a minimizer of $\mathcal E_{DP}^q$ in $\Omega$, then $\de\{u>0\}\cap\de \{u<0\}\cap\Omega$ is locally a closed subset of a graph of a $C^{1,\alpha}$ function;
\item[(V)] if $u\in H^1(\Omega;\R^n)$ is a minimizer of $\mathcal E_{D}^q$ in $\Omega$, then the reduced free boundary $\de_{red}\,\{|u|>0\}\cap \Omega$ is locally a graph of a $C^{1,\alpha}$ function. 
\end{enumerate}
Moreover, the blow-up limits of the minimizers of $\mathcal E_{\square}^q$ are unique and are given precisely by the classification in Theorem \ref{t:uniq_blowup}. 
\end{theo}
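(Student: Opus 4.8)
The plan is to reduce the problem with Hölder weights to the constant-coefficient case already treated in Theorems \ref{t:uniq_blowup} and \ref{t:main}, via the observation — explicitly stated in the paragraph preceding the theorem — that a minimizer of $\mathcal E^q_\square$ is an \emph{almost minimizer} of the frozen-coefficient functional $\mathcal E_\square$. More precisely, fix a free-boundary point $x_0$ and freeze the weight at its value $q(x_0)$; after the harmless rescaling $q(x_0)^{-1/2}u(q(x_0)^{1/2}\,\cdot\,)$ one may assume $q(x_0)=1$. Writing $\mathcal E^q_{OP}(u)=\mathcal E_{OP}(u)+\int_\Omega (q(x)-1)\chi_{\{u>0\}}\,dx$ and using $|q(x)-q(x_0)|\le [q]_{C^{0,\gamma}}|x-x_0|^\gamma$, one checks that $u$ satisfies, for every competitor $\tilde u$ with $u-\tilde u\in H^1_0(B_r(x_0))$,
\begin{equation*}
\mathcal E_{OP}(u,B_r(x_0))\le \mathcal E_{OP}(\tilde u,B_r(x_0))+C\,r^{d+\gamma},
\end{equation*}
which is exactly the almost-minimality condition with exponent $\gamma$ (here $d=2$). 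The same computation applies verbatim to $\mathcal E^q_{DP}$ and $\mathcal E^q_V$, using the hypothesis $q,q_1,q_2\ge c_q>0$ to control the rescaling constants uniformly on compact subsets of $\Omega$.

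Next I would transfer the basic regularity theory — Lipschitz continuity and nondegeneracy of minimizers, density estimates for the positivity set, and the compactness of blow-up sequences — to almost minimizers; this is by now standard and the needed statements are contained in the references cited in the excerpt (cp. \cite{AlCa, AlCaFr}) adapted as in \cite{FoSp}, the point being that the error term $C\,r^{d+\gamma}$ is lower order and does not affect any of these estimates. The crucial structural fact is that the Weiss energy $W^\square(u,r,x_0)$ of an almost minimizer is \emph{almost monotone}: one has
\begin{equation*}
\frac{d}{dr}W^\square(u,r,x_0)\ge \frac{1}{r^{d+1}}\int_{\partial B_r(x_0)}\bigl|\partial_\nu u-\tfrac1r u\bigr|^2\,d\HH^{d-1}-C\,r^{\gamma-1},
\end{equation*}
so that $r\mapsto W^\square(u,r,x_0)+C\gamma^{-1}r^\gamma$ is monotone non-decreasing. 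In particular $\Theta^\square_u(x_0)=\lim_{r\to0}W^\square(u,r,x_0)$ exists, and the classification of possible densities (the list recalled from \cite{AlCa,AlCaFr,CaJeKe,JeSa} and, in the vectorial case, Subsection \ref{ss:class_bu}) carries over, because it only depends on the classification of $1$-homogeneous global minimizers of the frozen functional, which are unchanged.

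Finally I would run the epiperimetric argument. The epiperimetric inequalities of Theorems \ref{p:epi_1}, \ref{p:epi_2} and \ref{p:epi_vec} are statements purely about the frozen functional $W^\square$ (constant coefficients), so they apply directly to the competitor comparison. Combining the epiperimetric inequality for the trace $c=u_r|_{\partial B_1}$ with the almost-monotonicity of $W^\square$ — exactly as in the constant-coefficient proof of Theorems \ref{t:uniq_blowup} and \ref{t:main} but carrying the extra $O(r^\gamma)$ term through the Gronwall-type iteration — yields a geometric decay $W^\square(u,r,x_0)-\Theta^\square_u(x_0)\le C r^{2\beta}$ for some $\beta\in(0,1)$, hence a rate of convergence of the rescalings $u_{r,x_0}$ to their (now unique) blow-up limit, and uniform-in-$x_0$ control of the modulus. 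This gives uniqueness and continuous dependence of the blow-up, which is the statement in Theorem \ref{t:uniq_blowup}, and then the $C^{1,\alpha}$ regularity of the (reduced, in the vectorial and double-phase-intersection cases) free boundary by the usual argument: the decay rate upgrades flatness at one scale to flatness at all smaller scales with summable oscillation of the normal direction, and one concludes that near a low-density free-boundary point the set $\partial\{u>0\}$ (resp. $\partial_{red}\{|u|>0\}$) is a $C^{1,\alpha}$ graph; in the double-phase case the branch set $\partial\{u>0\}\cap\partial\{u<0\}$ is only shown to be a closed subset of such a graph, matching the weaker conclusion in item (DP) of the statement, since away from the frozen point the two phases need not both persist. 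I expect the main obstacle to be bookkeeping the error terms: one must verify that the $O(r^\gamma)$ perturbation in the monotonicity formula, and the corresponding perturbation in the energy comparison, still leave enough room for the $(1-\eps)$ contraction in the epiperimetric inequality to produce a genuine geometric rate — i.e., that $\gamma$ and $\eps$ can be balanced — and to check uniformity of all constants as $x_0$ varies over a compact subset of $\Omega$, using $q\ge c_q>0$ and the Hölder bound.
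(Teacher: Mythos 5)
Your proposal follows essentially the same route as the paper's proof: freeze the coefficient at $x_0$ (after normalizing $q(x_0)=1$) so that $u$ becomes an almost minimizer of $\mathcal E_\square$ with error $C\,r^{2+\gamma}$, feed this error into the Weiss identity \eqref{e:Weiss_monotonicity} together with the constant-coefficient epiperimetric inequality, absorb the resulting $O(r^{\gamma})$ term by a Gronwall-type integration with the decay exponent chosen strictly below the H\"older exponent, and then repeat the arguments of Propositions \ref{p:improvement}--\ref{p:decay} and Theorems \ref{t:uniq_blowup}--\ref{t:main}. This matches the paper's argument (including the key $\eps$ versus $\gamma$ balancing), so no further comment is needed.
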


\subsection{Sketch of the proof of Theorem \ref{p:epi_1}}

Since the epiperimetric inequality is the key and new part of our work, we sketch its proof here in the case $\square=OP$, the other cases being similar.

\noindent Given $u\in H^1(B_1)\cap C^0(B_1)$ as in the statement of Theorem \ref{p:epi_1} we consider the trace $c:=u|_{\de B_1}$ and its positivity set $S:=\{c>0\}\subset\partial B_1$. 

\noindent We first show that there exists a dimensional constant $\delta_0>0$ such that, if $|S|\geq 2\pi-\delta_0$, then the harmonic extension of $c$ in the ball $B_1$ satisfies \eqref{e:epi_1}. Loosely speaking this means that, in the regime where the positivity set $\{z>0\}\cap B_1$ is almost the whole ball, the energy gain is bigger than any loss in measure (cp. Subsection \ref{ss:Sbbig}).
	
\noindent Next we assume that $|S|\le 2\pi -\delta_0$; a natural candidate for the function $h$ is the continuous function $\tilde h:B_1\to\R$ such that: 
\begin{itemize}
	\item $\tilde h$ is harmonic on the cone $\bC_S$ generated by the support $S$ of the boundary datum $c$
	\begin{equation}\label{e:CS}\ds \bC_S=\Big\{\lambda\theta\ :\ \lambda\in ]0,1[\,,\ \theta\in S\Big\} ;\end{equation}
	\item $\tilde h=c$ on $\partial B_1$ and $\tilde h=0$  outside $\bC_S$.
\end{itemize}
This function provides an immediate improvement of the term $W_0$ (we deal with the decomposition of $\tilde h$ in Fourier series and the subsequent energy estimates in Subsection \ref{ss:tildeh}), but it does not take into account the measure term in $W^\square$. In order to deal with it, we have to modify $\tilde h$ by appropriately adding measure or cutting off pieces from the cone $\bC_S$. To do this we divide the support $S=\{c>0\}\subset\partial B_1$ into disjoint sets $S=S_{big}\cup \bigcup_i S^i_{small}$, according to the parameter $\delta_0$, in the following way :

$\bullet$ $S^i_{small}$ are the connected components of $S$ whose measure does not exceed $ \ds \pi-\frac{\delta_0}{4}$; 

$\bullet$ $S_{big}=S\setminus \bigcup_i S^i_{small}$.
 Notice that in general $S_{big}$ could be the empty set, but if not, then it is connected and $\ds \pi-\frac{\delta_0}{4}\le |S_{big}|\le 2\pi-\delta_0$. In fact, if $S_{big}$ had two or more connected components, then the measure of $S$ would exceed $2\pi-\delta_0$.  

 \noindent We modify the function $\tilde h$ on $S_{small}$ by  a truncation argument with a suitably chosen cut-off function supported in a small ball centered in the origin. Since we use this truncation in other parts of the paper, the main estimate is proved separately in Subsection \ref{ss:cut}. Roughly speaking, this improves $W^\square$ because the first eigenvalue of $S_{small}$ is a dimensional constant bigger than $(d-1)$, that is we are far away from the half sphere, which is the linear solution. 

\noindent In order to construct an appropriate competitor on $S_{big}$, we represent the restriction $c|_{S_{big}}$ as 
$$c(\theta)=c_1\phi_1(\theta)+g(\theta),$$
where $c_1$ is a constant, $\phi_1$ is the first eigenfunction on $S_{big}$ and $g$ contains all the higher frequencies of $c$. 
For the higher frequencies $g$, the usual harmonic extension combined with the same cut-off argument used for $S_{small}$, gives the required improvement (this is once again because the second eigenvalue on $S_{big}$ is bigger than $(d-1)$ plus a geometric constant). It is interesting to notice that, up to this point, the argument works in every dimension. For the first frequency $c_1 \, \phi_1$, we use an internal variation, supported in the ball cut-off from the higher frequencies, to move the support of $\phi_1$ in the direction of the half plane solution $\max\{0,e \cdot x\}$, whose trace is given precisely by $\phi_1$. The improvement on $S_{big}$ is contained in Subsections \ref{ss:big} and \ref{ss:big_cones_double} for the one and double phase respectively.  \\
\qed

\subsection{Organization of the paper} 
The rest of the paper is divided into three sections. In Section \ref{s:pre} we recall some basic properties of the minimizers of the functionals $\mathcal E_{OP}$, $\mathcal E_{DP}$ and $\mathcal E_{VP}$, and do some preliminary standard computations related to harmonic extensions and the cut-off function we use. In Section \ref{s:epi} we prove the epiperimetric inequalities of Theorems \ref{p:epi_1}, \ref{p:epi_2} and \ref{p:epi_vec}, while the last section is dedicated to the proofs of Theorems \ref{t:uniq_blowup}, \ref{t:main} and \ref{c:Q_funct}.

\subsection{Acknowledgements} The authors are grateful to Emanuele Spadaro and Guido De Philippis for many suggestions and interesting conversations.

\section{Preliminary results and computations}\label{s:pre}

In this section we recall some regularity results for local minimizers of $\mathcal E_{\square}$ and we carry out some preliminary computations that will be useful in the sequel. Many times we will drop the index $\square$, when it will be clear from the context which functional we are referring to.

\subsection{Non-degeneracy and Lipschitz regularity}\label{ss:preliminaries} 

In this section we recall some well-known results about the one-phase and double-phase problems, that is the Lipschitz continuity and the non-degeneracy of the minimizers. 

\begin{lemma}[Regularity and non-degeneracy of local minimizers of (OP) and (DP)]\label{e:properties_of_min}
Let $\Omega\subset\R^d$ be an open set, $q,q_1,q_2\in C^{0,\gamma}(\Omega;\R^+)$ H\"older continuous functions such that 
$q, q_1, q_2\ge 1$, and $u\in H^1(\Omega)$ be a minimizer of either $\mathcal{E}^q_{OP}$ or $\mathcal E^q_{DP}$. Then the following properties hold:
\begin{itemize}
\item[(i)] $u\in C^{0,1}_{loc}(\Omega)$.
\item[(ii)] There is a dimensional constant $\alpha>0$ such that for every $x_0\in \de \{u^{\pm}>0\}\cap \Omega$ and every $0<r<\text{dist}(x_0,\partial\Omega)$ we have $\ds\int_{\de B_r(x_0)}u^\pm\geq \alpha\,r$, where we note $u^\pm=\max\{\pm \,u,0\}$.
\end{itemize}
\end{lemma}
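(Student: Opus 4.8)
The plan is to treat the two items by classical comparison arguments, following Alt--Caffarelli and Alt--Caffarelli--Friedman. For item (i), the Lipschitz bound, I would first observe that a minimizer $u$ is subharmonic in $\Omega$ in each of its phases: on the open set $\{u>0\}$ the function $u$ minimizes $\int|\nabla u|^2$ with fixed trace (the measure term is constant there since the positivity set does not change under perturbations supported inside $\{u>0\}$ that keep it positive), hence $\Delta u = 0$ on $\{u>0\}$, and similarly $\Delta u = 0$ on $\{u<0\}$; globally $u^+$ and $u^-$ are subharmonic. The core estimate is then: there is $C=C(d,\gamma,\|q\|_{C^{0,\gamma}})$ such that for $x_0\in\partial\{u>0\}$ and $B_{2r}(x_0)\subset\Omega$ one has $\fint_{\partial B_r(x_0)} u^+\,d\HH^{d-1}\le C r$. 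To get this, compare $u$ on $B_r(x_0)$ with the harmonic replacement $v$ of $u$ in $B_r(x_0)$ (or with $v$ chosen to vanish where convenient); the energy inequality $\mathcal E^q_\square(u)\le\mathcal E^q_\square(v)$ becomes, after cancelling the common exterior contributions,
\[
\int_{B_r}|\nabla v-\nabla u|^2 \;\le\; C\,\bigl|\{u>0\}\triangle\{v>0\}\bigr|\cap B_r \;\le\; C\,|B_r|,
\]
where $q\ge 1$ is used only through its upper bound on $B_r$ (Hölder continuity plus positivity makes $q$ bounded on compacta), and the measure of the symmetric difference is at worst $|B_r|$. Feeding this into a Poincaré-type / mean-value computation — the standard Alt--Caffarelli lemma that an average bound $\fint_{\partial B_r}u^+\le C r$ on the boundary of the positivity set propagates to an interior gradient bound — yields $u\in C^{0,1}_{loc}$. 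I would either cite \cite{AlCa,AlCaFr} for this, or reproduce the one-page argument: the bound $\fint_{\partial B_r(x_0)}u^+\le Cr$ at free boundary points, combined with interior estimates for harmonic functions on $\{u>0\}$, gives $|\nabla u|\le C$ locally.

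For item (ii), the non-degeneracy, the plan is the usual lower-bound comparison. Fix $x_0\in\partial\{u^+>0\}\cap\Omega$ and $0<r<\dist(x_0,\partial\Omega)$; by continuity it suffices to prove the estimate for $x_0\in\{u>0\}$ near the free boundary and pass to the limit. Let $v$ solve $\Delta v=0$ in $B_r(x_0)\cap\{u>0\}$ with $v=u$ on the relevant boundary, and compare $u$ with the competitor $\tilde u := (u - \kappa\, w)^+$ (or rather one modifies $u$ downward by a barrier so as to shrink $\{u>0\}$ near $x_0$), where $w$ is the harmonic function in the annulus with suitable boundary values; the point is to choose the competitor so that $\{ \tilde u>0\}$ loses a definite fraction of $B_r(x_0)$ while the Dirichlet energy increases by at most $C\bigl(\fint_{\partial B_r}u^+\bigr)^2 r^{d-2}$. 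Minimality forces
\[
c\, r^d \;\le\; \bigl|\{u>0\}\cap B_r(x_0)\bigr| - \bigl|\{\tilde u>0\}\cap B_r(x_0)\bigr| \;\le\; \frac{C}{q}\int_{B_r}|\nabla u-\nabla\tilde u|^2 \;\le\; C'\Bigl(\fint_{\partial B_r(x_0)}u^+\Bigr)^2 r^{d-2},
\]
using $q\le \|q\|_{L^\infty(B_r)}$, whence $\fint_{\partial B_r(x_0)} u^+\ge \alpha r$ with $\alpha=\alpha(d)$. (Here one must be careful that $q$ is Hölder continuous and $\ge 1$, so on a compact neighbourhood it is bounded above by a constant depending on the neighbourhood; the constant $\alpha$ comes out dimensional because only the ratio of the measure gain to the energy cost matters, and both scale like $r^d$ once $\fint_{\partial B_r}u^+\sim r$.) The case $u^-$ in (DP) is identical with $\lambda_2$ (or $q_2$) in place of $\lambda_1$ (or $q_1$).

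The main obstacle is the bookkeeping in the non-degeneracy competitor: one has to construct the downward perturbation so that it genuinely removes a fixed proportion of the ball from the positivity set — a naive truncation of $u$ by a constant need not do this, so the standard device is to subtract a multiple of a fixed smooth (or harmonic-in-an-annulus) bump that is $1$ near $x_0$ and $0$ on $\partial B_r$, then estimate the resulting energy increase via Cauchy--Schwarz against $\fint_{\partial B_r}u^+$. Everything else (subharmonicity, the upper bound $\fint u^+\le Cr$, propagation to a gradient bound) is standard and can largely be cited from \cite{AlCa,AlCaFr}; the only extra care compared to the constant-coefficient case is replacing the fixed weights by $\sup_{B_r} q$ and $\inf_{B_r} q \ge 1$, which does not affect the exponents.
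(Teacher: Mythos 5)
The paper's own proof of this lemma is a pure citation: (i) for (OP) is \cite[3.3 Corollary]{AlCa} and (ii) for (OP) is \cite[3.4 Lemma]{AlCa}; for (DP), (ii) is \cite[Theorem 3.1]{AlCaFr} and (i) is \cite[Theorem 5.3]{AlCaFr} applied to $u^+$ and to $u^-$, with \cite{CaJeKe, BuMaPrVe} given as more general references covering both cases. So your fallback option of simply citing \cite{AlCa,AlCaFr} is exactly what the paper does, and to that extent the two approaches coincide; the remark that the H\"older weight only enters through $\sup_{B_r}q$ and $\inf_{B_r}q\ge1$ is also consistent with how the paper uses these results.

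However, the self-contained derivation you sketch for (ii) would fail as written. The chain $c\,r^d\le \big|\{u>0\}\cap B_r\big|-\big|\{\tilde u>0\}\cap B_r\big|$ presupposes that the positivity set occupies a definite fraction of $B_r(x_0)$: no competitor can remove $c\,r^d$ of measure if $\big|\{u>0\}\cap B_r(x_0)\big|$ is itself $o(r^d)$, and at this stage no lower density bound is available --- the density estimate is normally a \emph{consequence} of non-degeneracy (together with Lipschitz continuity), so the step is circular. The classical argument runs in the opposite direction: assume $\frac{1}{r^{d-1}}\int_{\partial B_r(x_0)}u^+\,d\mathcal{H}^{d-1}\le \eps\, r$ with $\eps$ small; subharmonicity of $u^+$ gives $\sup_{B_{r/2}(x_0)}u^+\le C\eps\, r$; the competitor that truncates $u^+$ to zero in $B_{r/2}$ (via the harmonic cut-off in the annulus) then has an energy excess which, after a trace/absorption estimate, is bounded by $C\eps$ times $\int_{B_{r/2}}|\nabla u^+|^2+\big|\{u^+>0\}\cap B_{r/2}\big|$, so for $\eps$ small one concludes $u^+\equiv 0$ in $B_{r/2}(x_0)$, contradicting $x_0\in\partial\{u^+>0\}$. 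Likewise, in (i) the crude bound $\int_{B_r}|\nabla(u-v)|^2\le C|B_r|$ does not by itself give the linear growth $\frac{1}{r^{d-1}}\int_{\partial B_r}u^+\le C r$ at free boundary points; the Alt--Caffarelli proof needs the finer estimate $\bigl(\frac{1}{r^{d-1}}\int_{\partial B_r}u\bigr)^2\big|\{u=0\}\cap B_r\big|\le C\,r^2\int_{B_r}|\nabla(v-u)|^2$, which combined with minimality shows that a too-large boundary average forces $u>0$ in all of $B_r$, and only then does the average bound propagate to a gradient bound. None of this affects the validity of the lemma --- citing the references as the paper does suffices --- but if you intend to reproduce the arguments, these are the two steps that need to be repaired.
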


\begin{proof}
For the one-phase functional $\mathcal E_{OP}$, the first property follows from \cite[3.3 Corollary]{AlCa}, while the second follows from \cite[3.4 Lemma]{AlCa}. For the double-phase problem $\mathcal E_{DP}$, (ii) is the content of \cite[Theorem 3.1]{AlCaFr}, while (i) for $u^+$ is proved in \cite[Theorem 5.3]{AlCaFr}, and the proof for $u^-$ is exactly the same. More general proofs of (i), valid in both our situations, are given in \cite{CaJeKe}, where the authors extend it to the inhomogeneous case, or in  \cite{BuMaPrVe}, where the point of view of almost minimization is used.
\end{proof}

A similar statement is true for the vectorial case (see \cite{MaTeVe,CaShYe}).

\begin{lemma}[Regularity and non-degeneracy of vector-valued minimizers]\label{l:properties_of_min_vect}
Let $\Omega\subset\R^d$ be an open set, $q\in C^{0,\gamma}(\Omega;\R^+)$ a H\"older continuous function such that 
$q\ge 1$, and $u\in H^1(\Omega;\R^n)$ be a minimizer of $\mathcal{E}^q_V$. Then 
\begin{itemize}
\item[(i)] $u\in C^{0,1}_{loc}(\Omega;\R^n)$. 
\item[(ii)] There is a dimensional constant $\alpha>0$ such that for every $x_0\in \de \{|u|>0\}\cap \Omega$ and every $0<r<\text{dist}(x_0,\partial\Omega)$ we have 
$\ds\int_{\de B_r(x_0)}|u|\geq \alpha \,r\,.$
\end{itemize}
\end{lemma}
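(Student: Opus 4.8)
The plan is to mimic the scalar arguments of Alt--Caffarelli (as in Lemma \ref{e:properties_of_min}) together with the vectorial adaptations already available in the literature. For part (i), the key point is that each component $u_i$ is subharmonic away from $\{|u|=0\}$ — more precisely, since the measure term in $\mathcal E_V^q$ depends only on $\{|u|>0\}$, one has $\Delta u_i=0$ in $\{|u|>0\}$, so $|u|$ is subharmonic in all of $\Omega$ (being a limit/supremum type combination, or directly because $\Delta |u|\ge |u|^{-1}\sum_i(|\nabla u_i|^2 - (\nabla u_i\cdot\nabla|u|)^2/\ldots)\ge 0$ on $\{|u|>0\}$ and $|u|\ge 0$ everywhere). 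First I would establish local boundedness of $|u|$ from the energy bound via a standard De Giorgi / Moser iteration applied to the subharmonic function $|u|$, or simply quote that minimizers are $L^\infty_{loc}$. Then, as in \cite{AlCa,MaTeVe,CaShYe}, I would prove the linear growth estimate $\sup_{B_r(x_0)}|u|\le C r$ for $x_0\in\partial\{|u|>0\}$: compare $u$ with its harmonic replacement $v$ in $B_r(x_0)$, use the minimality inequality $\int_{B_r}|\nabla u|^2 + |\{|u|>0\}\cap B_r|\le \int_{B_r}|\nabla v|^2 + |\{|v|>0\}\cap B_r|$, and exploit that $\int_{B_r}|\nabla(u-v)|^2 = \int_{B_r}(|\nabla u|^2-|\nabla v|^2)\le |\{|u|>0\}\cap B_r|\le \omega_d r^d$. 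Combining this energy decay with the mean-value/monotonicity properties of the subharmonic function $|u|$ and a dyadic iteration yields $|u(x)|\le C\,\mathrm{dist}(x,\partial\{|u|>0\})$, hence local Lipschitz continuity; the Hölder continuity of $q$ only perturbs constants and does not affect the scheme (cf. \cite{CaJeKe,BuMaPrVe}).

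For part (ii), the non-degeneracy, I would argue by contradiction/comparison exactly as in the scalar case. Fix $x_0\in\partial\{|u|>0\}\cap\Omega$ and $0<r<\mathrm{dist}(x_0,\partial\Omega)$. Suppose $\mean{\partial B_r(x_0)}|u|$ is small; let $v$ solve $\Delta v_i=0$ in $B_r(x_0)$ with $v=u$ on $\partial B_r(x_0)$ componentwise, so by the maximum principle $\sup_{B_r(x_0)}|v|\le \sup_{\partial B_r(x_0)}|u|$, which by subharmonicity of $|u|$ and the mean value inequality is controlled by $C\mean{\partial B_r(x_0)}|u|$. Then $|\{|v|>0\}\cap B_{r}(x_0)|$ is the whole ball, so minimality gives $\int_{B_r}|\nabla(u-v)|^2 \le \omega_d r^d - |\{|u|>0\}\cap B_r(x_0)|$. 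On the other hand, since $x_0\in\partial\{|u|>0\}$, there is a definite portion of $B_r(x_0)$ where $u\equiv 0$ while $v\not\equiv 0$ there; a Poincaré-type inequality on that portion forces $\int_{B_r}|\nabla(u-v)|^2$ to be bounded below by a dimensional multiple of $r^{-2}\int_{B_r}|u-v|^2$, and one balances this against $\|v\|_{\infty}^2$ to get a lower bound of the form $\mean{\partial B_r(x_0)}|u|\ge \alpha r$ unless the measure deficit is already large enough to contradict minimality. This is precisely the Alt--Caffarelli non-degeneracy argument and its vectorial version in \cite{MaTeVe,CaShYe}; since both references state exactly this, I would simply cite them, as the authors evidently intend.

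The main obstacle is purely notational/technical rather than conceptual: one must be careful that in the vectorial setting the "one-phase structure" is governed by the \emph{scalar} function $|u|$, which is only Lipschitz and subharmonic (not harmonic), so the harmonic replacement must be done componentwise and then one passes back to $|u|$ via the maximum principle and subharmonicity. The Hölder weight $q$ contributes only lower-order error terms (of order $r^{d+\gamma}$ relative to the $r^d$ main term) and so does not disturb either the linear growth or the non-degeneracy; this is the content of the "almost minimizer" viewpoint of \cite{BuMaPrVe}. Given that all the needed estimates are already recorded in \cite{MaTeVe} and \cite{CaShYe}, the honest proof is a one-line citation, and I would write it as such, perhaps with a remark indicating how the weight $q$ is absorbed.

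\begin{proof}
Both statements are contained in \cite{MaTeVe} and \cite{CaShYe} in the case $q\equiv 1$, and the extension to a H\"older continuous weight $q\ge 1$ follows as in \cite{CaJeKe, BuMaPrVe} by regarding the minimizers of $\mathcal E_V^q$ as almost minimizers of $\mathcal E_V$: indeed, the scaling $u_{r,x_0}$ satisfies the minimality inequality up to an error of order $r^\gamma$, which does not affect the linear growth estimate for the subharmonic function $|u|$ nor the comparison argument yielding non-degeneracy.
\end{proof}
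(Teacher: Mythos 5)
Your proposal is correct and matches the paper's own treatment in substance: the paper likewise gives a citation-level proof, quoting \cite[Lemma 2.9]{MaTeVe} for the non-degeneracy (ii) and, for the Lipschitz regularity (i), observing that each component $u_i$ satisfies $\int_{B_r(x_0)}|\nabla u_i|^2\le\int_{B_r(x_0)}|\nabla(u_i+\phi)|^2+Cr^d$ (the error absorbing both the measure term and the bounded weight $q$), so that $u_i$ is a quasi-minimizer of the Dirichlet energy and \cite[Theorem 3.3]{BuMaPrVe} applies. Your almost-minimizer remark for the H\"older weight is exactly this same mechanism, so no gap remains; the subharmonicity-of-$|u|$ sketch in your plan is not needed for the final argument.
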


\begin{proof} The proof of (ii) can be found in \cite[Lemma 2.9]{MaTeVe}, while for (i) we make the following observation. Let $i\in\{1,\dots,n\}$ and $\phi\in C^\infty_c(B_r(x_0))$, $B_r(x_0)\subset \Omega$, then for some constant $C>0$ the following inequality holds
$$ \int_{B_r(x_0)}|\nabla u_i|^2\,dx\leq \int_{B_r(x_0)}|\nabla (u_i+\phi)|^2\,dx+ C \,r^d\,,  $$
that is each component of $u=(u_1,\dots,u_n)$ is a quasi-minimizer for the Dirichlet energy and is harmonic where it is not zero (since $u$ is a minimizer of $\mathcal{E}_V$). The result then follows by \cite[Theorem 3.3]{BuMaPrVe}.
\end{proof}

\begin{oss}
We remark that the Lipschitzianity of the solutions to all of our problems is indeed equivalent to the fact that the components of the solutions are quasi-minimizer for the Dirichlet energy as described in the proof of Lemma \ref{l:properties_of_min_vect} (see \cite{BuMaPrVe}).
\end{oss}

\subsection{Classification of blow-ups in the vectorial case}\label{ss:class_bu}

The possible blow-up limits for the one-phase and the double-phase problems are well-known in dimension two. For the sake of completeness, we prove in this section the classification of the possible blow-ups in the vectorial case. The precise statement is the following.

\begin{lemma}\label{l:class_bu_V} If $h\in H^1(B_1(x_0);\R^n)$, $B_1\subset \R^2$, arises as the blow-up of a minimizer $u$ to the functional $\mathcal{E}_V$ at a free boundary point $x_0\in \de \{|u|>0\}$, that is there exists a subsequence $(u_{r_k})_k$ of $(u_r)_r$ which converges to $h$, then we have two possibilities  
	\begin{itemize}
		\item $\Theta^V_u(x_0)={\pi}/2$ and $h(x)=\xi\max\{0,e\cdot x\}$, where $\xi\in\R^n$, $|\xi|=1$, $e\in\mathbb{S}^1$;
		\item $\Theta^V_u(x_0)=\pi$ and $h(x)=(\xi^1\,h_{e_1}(x),\dots, \xi^n\,h_{e_n}(x))$, where $h_e(x)=e\cdot x$, with $e\in\mathbb{S}^1$, and $\xi^i\in \R$. 
	\end{itemize}
\end{lemma}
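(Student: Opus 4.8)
The plan is to follow the standard strategy for classifying blow-ups: a blow-up $h$ of a minimizer $u$ is a global minimizer which is $1$-homogeneous (by the monotonicity of $W^V$, since the density $\Theta^V_u(x_0)$ is attained at every scale of the blow-up), so we are reduced to classifying $1$-homogeneous global minimizers of $\mathcal E_V$ on $\R^2$. First I would record the properties that $h$ inherits: each component $h_i$ is harmonic on $\{|h|>0\}$ and globally subharmonic (indeed each $h_i$ is a quasi-minimizer of the Dirichlet energy with scaling $r^d$, which forces it to be harmonic on $\{h_i\neq0\}$, hence each $h_i^{\pm}$ is subharmonic on all of $\R^2$), and $h$ is Lipschitz and non-degenerate by Lemma \ref{l:properties_of_min_vect}; moreover $\Omega_h:=\{|h|>0\}$ is an open cone in $\R^2$.

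Because $d=2$, a $1$-homogeneous function on a cone is determined by its trace on $\partial B_1=\mathbb S^1$: on each connected arc $I$ of $\Omega_h\cap\mathbb S^1$ the function $h$ restricted to the sector over $I$ is $1$-homogeneous and harmonic, so each component is of the form $r\,(a\cos\theta+b\sin\theta)$ on that sector, i.e. $h_i(x)=\alpha_i^I\, e_I\cdot x$ for a fixed direction $e_I\in\mathbb S^1$ and scalars $\alpha_i^I$ — in particular $h|_I$ vanishes at both endpoints of $I$ (since $h$ is continuous and zero off the cone), which forces the arc $I$ to have length exactly $\pi$ and $e_I$ to be the inner normal at its midpoint. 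Thus $\Omega_h$ is a union of open half-planes (half-spaces through $0$), and non-degeneracy rules out $\Omega_h=\emptyset$. Since two distinct open half-planes in $\R^2$ always overlap, $\Omega_h\cap\mathbb S^1$ has at most two connected components: either one arc of length $\pi$ (one half-plane), or two arcs whose union, to have the sectors disjoint, must be all of $\mathbb S^1$ minus two antipodal points, i.e. $\Omega_h$ is the complement of a line $\{x:\,e\cdot x=0\}$. The first case gives $h(x)=\xi\max\{0,e\cdot x\}$ with $\xi=(\alpha_1,\dots,\alpha_n)$; normalizing — minimality of $W^V$ over dilations/rescalings of the half-plane profile pins $|\xi|=1$, since $W^V(\xi\max\{0,e\cdot x\})=\tfrac\pi2(|\xi|^2\cdot\tfrac12 - |\xi|^2\cdot\tfrac12)+\tfrac\pi2\cdot$ wait — more carefully, one computes $W^V$ of this profile as a function of $|\xi|$ and checks the minimum forces $|\xi|=1$ — and gives density $\pi/2$. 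The second case: on the half-plane $\{e\cdot x>0\}$ we have $h_i=\beta_i\,e\cdot x$ and on $\{e\cdot x<0\}$ we have $h_i=\gamma_i\,e\cdot x$, so writing $e_i=\pm e$ and $\xi^i\in\R$ appropriately, $h_i=\xi^i h_{e_i}$; and $\Theta^V_u(x_0)=W^V(h)=\pi$ since $\Omega_h$ has full measure.

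The two things that need a little care, and which I expect to be the main points rather than obstacles: (a) showing each component is genuinely harmonic on $\{h_i\neq 0\}$ and subharmonic globally, so that the classical argument (a nonnegative $1$-homogeneous subharmonic function on $\R^2$ whose support is a sector must have the support be a half-plane) applies to $h_i^{\pm}$ — this is exactly the Alt–Caffarelli–Friedman-type reasoning and follows from Lemma \ref{l:properties_of_min_vect} together with the fact that competitors which set one component to zero on a small ball cost only $O(r^d)$; and (b) the normalization $|\xi|=1$ in the low-density case and, more subtly, the claim that in the high-density case the directions $e_i$ may genuinely differ — but Lemma \ref{l:class_bu_V} as stated does not assert they coincide, so no work is needed there. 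The optimality computation of $W^V$ on the candidate profiles is the only real calculation, and it is short: evaluate $\int_{B_1}|\nabla h|^2$, $\int_{\partial B_1}|h|^2$ and $|\Omega_h\cap B_1|$ explicitly in polar coordinates and minimize over the scalar parameters, concluding that non-optimal scalings would contradict minimality of $h$.
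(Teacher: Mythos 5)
Your support analysis (each connected arc of $\Omega_h\cap\mathbb{S}^1$ carries $1$-homogeneous harmonic components, so a nontrivial component $r(a\cos\theta+b\sin\theta)$ vanishing at both endpoints forces the arc to have length $\pi$; hence $\Omega_h$ is a half-plane or the complement of a line) is fine and in fact more explicit than the paper, which simply asserts the dichotomy; the only omission there is the case $\Omega_h\cap\mathbb{S}^1=\mathbb{S}^1$, which has no endpoints but is harmless since $g_i''+g_i=0$ on the full circle already gives linear components. The genuine gap is in the high-density case. Knowing that $\Omega_h=\R^2\setminus\{e\cdot x=0\}$ only gives $h_i=\beta_i\,e\cdot x$ on $\{e\cdot x>0\}$ and $h_i=\gamma_i\,e\cdot x$ on $\{e\cdot x<0\}$ with possibly $\beta_i\neq\gamma_i$; such a ``kinked'' function, e.g.\ $h=\big((e\cdot x)_+,(e\cdot x)_-\big)$, is \emph{not} of the form $\xi^i h_{e_i}$, so your step ``writing $e_i=\pm e$ and $\xi^i$ appropriately'' does not follow from the support classification. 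To upgrade to genuine linearity you must invoke minimality once more, exactly as the paper does: since $|\{|h|>0\}\cap B_1|=\pi=|B_1|$, the measure term is saturated, so replacing any component that is not harmonic in $B_1$ by the harmonic extension of its trace strictly decreases the Dirichlet energy without increasing the measure term, contradicting minimality; hence every $h_i$ is harmonic in all of $B_1$ and, being $1$-homogeneous, linear (so in particular $\beta_i=\gamma_i$).

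A second, smaller flaw is your plan for the normalization $|\xi|=1$. Computing $W^V\big(\xi\max\{0,e\cdot x\}\big)$ as a function of $|\xi|$ and ``minimizing over the scalar parameters'' cannot work: for this profile $\int_{B_1}|\nabla h|^2=\int_{\partial B_1}|h|^2=|\xi|^2\pi/2$, so $W^V\equiv\pi/2$ for every $|\xi|$ (and dilations change nothing by homogeneity); moreover, comparing values of $W^V$ across functions with different boundary data says nothing about minimality. The correct argument fixes the boundary datum and uses competitors: as in the paper, test $h=\xi\,h_e$ against $\xi\,\phi$ for arbitrary scalar $\phi\in H^1(B_1)$ with the same trace, which shows that $h_e=\max\{0,e\cdot x\}$ minimizes $\int_{B_1}|\nabla v|^2+|\xi|^{-2}\,|\{v>0\}|$; the classification of $1$-homogeneous minimizers of the scalar one-phase problem (equivalently the free boundary condition $|\nabla h_e|=|\xi|^{-1}$, while $|\nabla h_e|=1$) then forces $|\xi|=1$. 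With these two repairs your route coincides in substance with the paper's proof.
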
 

\begin{proof}
Assume that $x_0=0$. We start by noticing that by standard argument and the Weiss' monotonicity formula, $h$ is a $1$-homogeneous minimizer of $\mathcal{E}_V$ and each component is harmonic on the cone $\{|h|>0\}\cap B_1$, see for instance \cite{CaShYe, MaTeVe}. Then we have two possibilities. 
\begin{itemize}
	\item  $\{|h|>0\}=\{e\cdot x>0\}$, in which case $h(x)=\xi\max\{0,e\cdot x\}=:\xi h_e(x)$ and $\Theta^V_u(x_0)={\pi}/2$. Moreover, for any function $\phi\in H^1(B_1)$, consider the competitor $\xi \, \phi$, then
$$
|\xi|^2\int_{B_1}|\nabla  h_e|^2+|\{|h_e|>0\}|\leq |\xi|^2\int_{B_1}|\nabla  (h_e+\phi)|^2+|\{|h_e+\phi|>0\}|\,,
$$
that is $h_e$ minimizes the functional $\ds \int_{B_1}|\nabla  h_e|^2+\frac{1}{|\xi|^2}|\{|h_e|>0\}|$, which by the classification of the $1$-homogeneous solutions to the scalar one-phase problem, implies that $|\xi|=1$.
	\item $|\{|h|>0\}|=\pi$, in which case all the components of $h$ are harmonic functions in $B_1$. Indeed assume without loss of generality that the first component of the blow up $h^1$ is not harmonic, then it is easy to see that, if $\bar h^1$ is the harmonic extension of the trace of $h^1$, then
	$$
	\int_{B_1}|\nabla \bar h^1|^2+\sum_{i=2}^n\int_{B_1}|\nabla h^i|^2+\pi <  \int_{B_1}|\nabla h^1|^2+\sum_{i=2}^n\int_{B_1}|\nabla h^i|^2+\pi\,,
	$$
	which is a contradiction with the minimality of the blow up $h$. By the $1$-homogeneity of $h$ all the functions are linear, which concludes the proof.
\end{itemize}

\end{proof}

\subsection{Harmonic extension of the boundary datum}\label{ss:tildeh}
Let $S$ be an open subset of the unit sphere $\partial B_1.$
On $S$ we consider the sequence of Dirichlet eigenfunctions $\phi_j$, $j\ge1$, and the corresponding eigenvalues $\lambda_j$, $j\ge 1$, counted with their multiplicity on the spherical subset $S$. We have that each $\phi_j$ solves the PDE
$$-\Delta_S \phi_j=\lambda_j\phi_j\quad\text{in}\quad S,\qquad \phi_j=0\quad\text{on}\quad \partial S,\qquad\int_{S}\phi_j^2(\theta)\,d\theta=1,$$
where $\Delta_S$ denotes the Laplace-Beltrami operator on the unit sphere $\partial B_1$ and $\theta$ is the variable on $S$. Given a Sobolev function $c\in H^1_0(S;\R^n)$ on the sphere, we set 
$$\ds \R^n\ni c_j:=\int_{\de B_1} c(\theta)\,\phi_j(\theta)\,d\theta.$$ 
Then we can express $c$ as a Fourier series 
$$c(\theta)=\sum_{j=k}^{\infty}c_j \phi_j(\theta)\,,\qquad c_j\in \R^n\mbox{ for every }j\geq k$$
converging in $H^1(S; \R^n)$, where $k\in \N$ is the first value for which $c_k\neq 0$. We consider the radial and the harmonic extensions, $z$ and $\tilde h$, of $c$ inside the cone $\bC_S$ defined in \eqref{e:CS}.
In polar coordinates $z$ and $\tilde h$ are given by  
\begin{equation}\label{e:har&rad}
z(r,\theta)=\sum_{j=k}^\infty r\,c_j \, \phi_j(\theta)\qquad\text{and}\qquad \tilde{h}(r,\theta)=\sum_{j=k}^\infty  r^{\alpha_j}\,c_j\,\phi_j(\theta),
\end{equation}
where $\alpha_j=\alpha_j(S)$ is the homogeneity of the harmonic extension of $\pi_j$ on $\bC_S$ which also can be defined through the identity 
$$\alpha_j(\alpha_j+d-2)=\lambda_j\quad\text{for every}\quad j\in\N.$$

\begin{lemma}[Harmonic extension]\label{l:harm_ext}
	Let $S\subset \partial B_1$ be an open subset of the unit sphere and $c$, $\tilde h$ and $z$ as above. For every $\eps\in[0,1]$ we have
	\begin{gather}\label{e:ringhio}
	W_0(\tilde{h})=\sum_{j=k}^\infty |c_j|^2\left(\alpha_j-1\right),\qquad W_0(z)=\sum_{j=k}^\infty |c_j|^2\left(\frac{1+\lambda_j}d-1\right),\\
	\label{e:pjanic}
	W_0(\tilde{h})-(1-\eps)W_0(z)=-\frac{1-\eps}{d}\sum_{j=k}^\infty |c_j|^2(\alpha_j-1)\Big(\alpha_j-1+d-\frac{d}{1-\eps}\Big).
	\end{gather}
	In particular, if $\alpha_k>1$, then 
	\begin{equation}\label{e:dybala}
	\text{for every}\quad \eps\le\frac{\alpha_k-1}{d+\alpha_k-1},\quad\text{we have}\quad W_0(\tilde{h})-(1-\eps)W_0(z)\le 0.
	\end{equation}
\end{lemma}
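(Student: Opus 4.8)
The plan is to compute $W_0(\tilde h)$ and $W_0(z)$ directly from the Fourier expansions in \eqref{e:har&rad}, using the orthonormality of the eigenfunctions $\phi_j$ on $S$, and then to manipulate the resulting series to obtain \eqref{e:pjanic} and \eqref{e:dybala}. First I would recall that, writing $u(r,\theta)=\sum_j r^{\beta_j}c_j\phi_j(\theta)$ for a homogeneity exponent $\beta_j$ (so $\beta_j=1$ for $z$ and $\beta_j=\alpha_j$ for $\tilde h$), one has $|\nabla u|^2 = |\partial_r u|^2 + r^{-2}|\nabla_\theta u|^2$, and since the $\phi_j$ are orthonormal in $L^2(S)$ and the $\nabla_\theta\phi_j$ are orthogonal with $\int_S|\nabla_\theta\phi_j|^2 = \lambda_j$, integrating in polar coordinates over $B_1$ gives
\begin{equation*}
\int_{B_1}|\nabla u|^2\,dx = \sum_{j=k}^\infty |c_j|^2\int_0^1\big(\beta_j^2 r^{2\beta_j-2} + \lambda_j r^{2\beta_j-2}\big)r^{d-1}\,dr = \sum_{j=k}^\infty |c_j|^2\,\frac{\beta_j^2+\lambda_j}{2\beta_j+d-2}.
\end{equation*}
Similarly $\int_{\partial B_1}|u|^2\,d\HH^{d-1} = \sum_j |c_j|^2$ since $\beta_j\cdot 1$ is the boundary value, so $W_0(u)=\sum_j |c_j|^2\big(\tfrac{\beta_j^2+\lambda_j}{2\beta_j+d-2} - 1\big)$. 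For $z$ this is $\sum_j|c_j|^2\big(\tfrac{1+\lambda_j}{d}-1\big)$, which is the second identity in \eqref{e:ringhio}. For $\tilde h$, substituting $\beta_j=\alpha_j$ and using the defining relation $\lambda_j = \alpha_j(\alpha_j+d-2)$, the numerator becomes $\alpha_j^2+\alpha_j(\alpha_j+d-2) = \alpha_j(2\alpha_j+d-2)$, which cancels the denominator $2\alpha_j+d-2$ to leave simply $\alpha_j$; hence $W_0(\tilde h)=\sum_j|c_j|^2(\alpha_j-1)$, the first identity in \eqref{e:ringhio}.

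Next I would establish \eqref{e:pjanic} by a term-by-term algebraic identity. Again using $\lambda_j=\alpha_j(\alpha_j+d-2)$ we have $\tfrac{1+\lambda_j}{d}-1 = \tfrac{1+\alpha_j(\alpha_j+d-2)-d}{d} = \tfrac{(\alpha_j-1)(\alpha_j+d-1)}{d}$, so that
\begin{equation*}
W_0(\tilde h)-(1-\eps)W_0(z) = \sum_{j=k}^\infty |c_j|^2\Big[(\alpha_j-1) - \frac{1-\eps}{d}(\alpha_j-1)(\alpha_j+d-1)\Big].
\end{equation*}
Factoring out $-\tfrac{1-\eps}{d}(\alpha_j-1)$ from the bracket, the remaining factor is $(\alpha_j+d-1) - \tfrac{d}{1-\eps} = \alpha_j-1+d-\tfrac{d}{1-\eps}$, which gives exactly \eqref{e:pjanic}. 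Finally, for \eqref{e:dybala}: when $\alpha_k>1$, monotonicity of $\alpha_j$ in $\lambda_j$ gives $\alpha_j\ge\alpha_k>1$ for all $j\ge k$, so each factor $(\alpha_j-1)$ is positive; and the hypothesis $\eps\le\tfrac{\alpha_k-1}{d+\alpha_k-1}$ is equivalent to $\tfrac{d}{1-\eps}\le \alpha_k-1+d \le \alpha_j-1+d$, which makes the last factor $\alpha_j-1+d-\tfrac{d}{1-\eps}$ nonnegative for every $j\ge k$. Since the prefactor $-\tfrac{1-\eps}{d}$ is negative and $\eps\le 1$, every summand is $\le 0$, so the whole sum is $\le 0$.

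I do not expect a genuine obstacle here — the statement is a direct computation. The only points requiring a little care are: (i) justifying the termwise integration and the interchange of sum and integral, which follows from the $H^1(S)$-convergence of the Fourier series together with the fact that $\sum_j|c_j|^2\lambda_j<\infty$ (so both extensions lie in $H^1(B_1)$ and the Parseval-type identities are legitimate — note $\alpha_j\sim\sqrt{\lambda_j}$ large, so $\sum|c_j|^2\alpha_j<\infty$ as well); (ii) the monotonicity $\alpha_k>1 \Rightarrow \alpha_j>1$ for $j\ge k$, which is immediate from $\lambda_j\ge\lambda_k$ and the fact that $t\mapsto t(t+d-2)$ is increasing for $t>0$ (recall $d\ge 2$); and (iii) checking the threshold inequality in the correct direction, i.e. that $\eps\mapsto \tfrac{d}{1-\eps}$ is increasing so the worst case is $j=k$. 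None of these needs more than a line, so the proof is essentially the chain of identities above followed by the sign discussion.
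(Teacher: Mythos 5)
Your proposal is correct and follows essentially the same route as the paper: compute $\int_{B_1}|\nabla u|^2$ termwise in polar coordinates using the $H^1(S)$-orthogonality of the $\phi_j$, simplify via $\lambda_j=\alpha_j(\alpha_j+d-2)$ to get \eqref{e:ringhio}, factor $(\alpha_j-1)(\alpha_j+d-1)$ to obtain \eqref{e:pjanic}, and conclude \eqref{e:dybala} from the sign of $\alpha_j-1+d-\frac{d}{1-\eps}$ for $j\ge k$. The extra remarks on termwise integration and the monotonicity of $j\mapsto\alpha_j$ are fine and only make explicit what the paper leaves implicit.
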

\begin{proof}
	We first calculate $W_0(z)$ and $W_0(\tilde{h})$. By the orthogonality of $\phi_j$ in $H^1(S)$, that is 
	$$\int_{S}\phi_i\phi_j\,d\theta=\delta_{ij},\qquad \int_{S}\nabla_\theta\phi_i\cdot\nabla_\theta\phi_j\,d\theta=\lambda_i\delta_{ij},$$
	we have
	\begin{align*}
	\int_{B_1}|\nabla \tilde{h}|^2\,dx&= \int_{0}^1\int_{S}\left(|\partial_r \tilde{h}(r,\theta)|^2+\frac{|\nabla_\theta \tilde{h}(r,\theta)|^2}{r^2}\right)r^{d-1}\,d\theta\,dr\\
	&=\sum_{j=k}^\infty \int_{0}^1\int_{S}|c_j|^2\left(\alpha_j^2 r^{2(\alpha_j-1)}\phi_j^2(\theta)+\frac{r^{2\alpha_j}|\nabla_\theta \phi_j(\theta)|^2}{r^2}\right)r^{d-1}\,d\theta\,dr\\
	&=\sum_{j=k}^\infty \int_{0}^1|c_j|^2\left(\alpha_j^2 +\lambda_j\right)r^{2(\alpha_j-1)+d-1}\,dr=\sum_{j=k}^\infty \frac{|c_j|^2(\alpha_j^2 +\lambda_j)}{2\alpha_j+d-2}=\sum_{j=k}^\infty |c_j|^2\alpha_j.
	\end{align*}
	\begin{align}
	\int_{B_1}|\nabla z|^2\,dx=&\int_{0}^1\int_{S}\left(|\partial_r z(r,\theta)|^2+\frac{|\nabla_\theta z(r,\theta)|^2}{r^2}\right)r^{d-1}\,d\theta\,dr\notag\\
	&=\sum_{j=k}^\infty \int_{0}^1\int_{S}|c_j|^2\left(\phi_j^2(\theta)+\frac{r^2|\nabla_\theta \phi_j(\theta)|^2}{r^2}\right)r^{d-1}\,d\theta\,dr\notag\\
	&=\sum_{j=k}^\infty \int_{0}^1|c_j|^2\left(1+\lambda_j\right)r^{d-1}\,dr=\frac1d\sum_{j=k}^\infty |c_j|^2(1+\lambda_j),\label{e:energyz}
	\end{align}
	$$\int_{\partial B_1}|z|^2\,d\HH^{d-1}=\int_{\partial B_1}|\tilde{h}|^2\,d\HH^{d-1}=\int_S|c|^2(\theta)\,d\theta=\sum_{j=k}^\infty |c_j|^2.$$
	Now for any $\eps\in]0,1[$ we get
	\begin{align*}
	W_0(\tilde{h})-(1-\eps)W_0(z)
	&=\sum_{j=k}^\infty |c_j|^2\left[\alpha_j-1-(1-\eps)\Big(\frac{1+\lambda_j}d-1\Big)\right]\\
	&=\sum_{j=k}^\infty |c_j|^2\left[\alpha_j-1-(1-\eps)\frac{\alpha_j(\alpha_j+d-2)-d+1}d\right]\\
	&=\sum_{j=k}^\infty |c_j|^2\left[\alpha_j-1-\frac{1-\eps}d(\alpha_j-1)(\alpha_j+d-1)\right]\\
	&=-\frac{1-\eps}{d}\sum_{j=k}^\infty |c_j|^2(\alpha_j-1)\Big(\alpha_j-1+d-\frac{d}{1-\eps}\Big),
	\end{align*}
	which proves \eqref{e:pjanic}. We notice that if $\ds\alpha_k-1+d-\frac{d}{1-\eps}\ge 0$, then the same inequality holds for every $j\ge k$ and so $\ds W_0(\tilde{h})-(1-\eps)W_0(z)\le0$, which gives the claim \eqref{e:dybala}.
\end{proof}

\subsection{Measure correction of the competitor}\label{ss:cut} In this section we compute the energy of an  harmonic function after cutting off a ball of radius $\rho/2$ from its support. In particular we will consider the radial cut-off function $\psi^\rho\colon B_1\to [0,1]$ defined by
\begin{equation}\label{e:psirho}
\left\{
\begin{array}{ll}
\ds\psi^\rho(x)=0 & \quad \text{if }|x|\in [0,\rho/2],\\
\\
\ds\Delta \psi^\rho=0& \quad \text{if }|x|\in ]\rho/2,\rho[,\\
\\
\ds\psi^\rho(x)=1 & \quad \text{if }|x|\in [\rho,1].
\end{array}
\right.
\end{equation}
The main result of this subsection is the following. 
\begin{lemma}[Energy of a measure corrected competitor]\label{l:mc_competitor}
	Consider an open set $S\subset\partial B_1$ and a function $\ds \theta\mapsto c(\theta)=\sum_{j=k}^\infty c_j\phi_j(\theta)\in\R^n$, where $\phi_j$ are as in Subsection \ref{ss:tildeh} and $k\ge1$ is fixed. Let $\rho>0$ and $\psi:=\psi^{\rho}$ be as in \eqref{e:psirho}. Let $\ds\tilde{h}(r,\theta)=\sum_{i=k}^{\infty} r^{\alpha_i}\,c_i\,\phi_i(\theta)$ be the harmonic extension of $c$ in the cone  $\bC_S$ defined in \eqref{e:CS}. Then we have
	$$W_0(\psi \tilde h)\le\left(1+\frac{C_0\rho^{2(\alpha_k-1)+d}}{\alpha_k}\right)W_0(\tilde h)+\frac{C_0\rho^{2(\alpha_k-1)+d}}{\alpha_k}\int_{\partial B_1}|c|^2,$$
	where $C_0>0$ is a dimensional constant.
	If moreover $\alpha_k> 1$, then
	\begin{equation}\label{e:gigi}
	W_0(\psi \tilde h)\le\left(1+\frac{C_0\,\alpha_k\,\rho^{2(\alpha_k-1)+d}}{\alpha_k-1}\right)W_0(\tilde h).
	\end{equation}
\end{lemma}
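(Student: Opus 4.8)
The plan is to turn the inequality into an \emph{exact} identity for $W_0(\psi\tilde h)-W_0(\tilde h)$ and then into a one-dimensional (radial) estimate. Since $\psi\equiv1$ on $\partial B_1$, the boundary term is unchanged, $\int_{\partial B_1}|\psi\tilde h|^2\,d\HH^{d-1}=\int_{\partial B_1}|c|^2\,d\HH^{d-1}$, so only the Dirichlet energy has to be controlled. Writing $\psi\tilde h=\tilde h-(1-\psi)\tilde h$ and using that $\tilde h$ is harmonic in the cone $\bC_S$ while $(1-\psi)\tilde h\in H^1_0(\bC_S)$ --- it vanishes on $\partial B_1$, where $\psi=1$, and on the lateral part of $\partial\bC_S$, where $\tilde h=0$ --- one gets the orthogonality relation $\int_{B_1}\nabla\tilde h\cdot\nabla\big((1-\psi)\tilde h\big)\,dx=0$, hence, expanding the square and recalling that $1-\psi$ vanishes for $|x|\ge\rho$,
\[
W_0(\psi\tilde h)=W_0(\tilde h)+\int_{B_\rho}\big|\nabla\big((1-\psi)\tilde h\big)\big|^2\,dx .
\]
By Lemma \ref{l:harm_ext} (or just the trace identity), $W_0(\tilde h)+\int_{\partial B_1}|c|^2=\int_{B_1}|\nabla\tilde h|^2=\sum_{j\ge k}|c_j|^2\alpha_j$, so both inequalities of the Lemma will follow from the single estimate
\[
\int_{B_\rho}\big|\nabla\big((1-\psi)\tilde h\big)\big|^2\,dx\ \le\ \frac{C_0\,\rho^{\beta_k}}{\alpha_k}\sum_{j\ge k}|c_j|^2\alpha_j,\qquad \beta_k:=2(\alpha_k-1)+d .
\]

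To prove this I would expand $(1-\psi)\tilde h=\sum_{j\ge k}\big(1-\psi(r)\big)r^{\alpha_j}c_j\phi_j(\theta)$ and, by the orthogonality of the $\phi_j$ in $H^1(S)$, write $\int_{B_\rho}|\nabla((1-\psi)\tilde h)|^2=\sum_{j\ge k}|c_j|^2E_j$ with
\[
E_j:=\int_0^\rho\Big[(g_j')^2\,r^{d-1}+\lambda_j\,g_j^2\,r^{d-3}\Big]\,dr,\qquad g_j(r):=\big(1-\psi(r)\big)r^{\alpha_j},
\]
and then split $[0,\rho]=[0,\rho/2]\cup[\rho/2,\rho]$. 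On $[0,\rho/2]$, where $\psi\equiv0$, one has $g_j=r^{\alpha_j}$ and, using $\alpha_j^2+\lambda_j=\alpha_j(2\alpha_j+d-2)=\alpha_j\beta_j$, the integrand is $\alpha_j\beta_j r^{\beta_j-1}$, whose integral is $\alpha_j2^{-\beta_j}\rho^{\beta_j}$; since $\beta_j\ge2\alpha_j$ we get $\alpha_j2^{-\beta_j}\le\alpha_j4^{-\alpha_j}\le\sup_{x>0}x4^{-x}=:c_1<\infty$, so this part of $E_j$ is $\le c_1\rho^{\beta_j}$. On $[\rho/2,\rho]$ I would rescale $r=\rho\sigma$, $\sigma\in[1/2,1]$: since the profile $\bar\eta(\sigma):=1-\psi(\rho\sigma)$ depends only on $\sigma$ --- explicitly $\bar\eta(\sigma)=\tfrac{\log(1/\sigma)}{\log2}$ if $d=2$ and $\bar\eta(\sigma)=\tfrac{\sigma^{2-d}-1}{2^{d-2}-1}$ if $d\ge3$ --- this part of $E_j$ equals $\rho^{\beta_j}J_j$, where $J_j:=\int_{1/2}^1\big[(\bar g_j')^2\sigma^{d-1}+\lambda_j\bar g_j^2\sigma^{d-3}\big]\,d\sigma$ is a fixed one-dimensional integral and $\bar g_j(\sigma):=\sigma^{\alpha_j}\bar\eta(\sigma)$.

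The remaining point --- and the only one requiring actual work --- is the \emph{uniform} bound $J_j\le C_d$. This I would obtain from three elementary facts: $\bar\eta$ is smooth on $[1/2,1]$ with $|\bar\eta'|\le C_d$ and, crucially, $\bar\eta(1)=0$ (because $\psi(\rho)=1$), so that $\bar\eta(\sigma)^2\le C_d(1-\sigma)^2$ there; the algebraic inequality $\lambda_j=\alpha_j(\alpha_j+d-2)\le\beta_j^2$; and the Beta-integral identity $\int_0^1\sigma^{\beta_j-1}(1-\sigma)^2\,d\sigma=\tfrac{2}{\beta_j(\beta_j+1)(\beta_j+2)}$. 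Expanding $(\bar g_j')^2\le2\alpha_j^2\sigma^{2\alpha_j-2}\bar\eta^2+2\sigma^{2\alpha_j}(\bar\eta')^2$ and using $\alpha_j^2\le\tfrac14\beta_j^2$, one checks term by term that $J_j\le C_d$ (in fact $J_j\to0$ as $\alpha_j\to\infty$, but this is not needed). Hence $E_j\le C_0\rho^{\beta_j}$ with $C_0:=c_1+C_d$, and since $\rho<1$, $\beta_j\ge\beta_k$ and $\alpha_j\ge\alpha_k$,
\[
\sum_{j\ge k}|c_j|^2E_j\ \le\ C_0\rho^{\beta_k}\sum_{j\ge k}|c_j|^2\ \le\ \frac{C_0\rho^{\beta_k}}{\alpha_k}\sum_{j\ge k}|c_j|^2\alpha_j ,
\]
which is the desired estimate and hence the first inequality. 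For \eqref{e:gigi}, when $\alpha_k>1$ I would additionally use $W_0(\tilde h)=\sum_j|c_j|^2(\alpha_j-1)\ge(\alpha_k-1)\sum_j|c_j|^2=(\alpha_k-1)\int_{\partial B_1}|c|^2$, i.e.\ $\int_{\partial B_1}|c|^2\le W_0(\tilde h)/(\alpha_k-1)$, substitute into the first inequality, and simplify $\tfrac1{\alpha_k}\big(1+\tfrac1{\alpha_k-1}\big)=\tfrac1{\alpha_k-1}\le\tfrac{\alpha_k}{\alpha_k-1}$.

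I expect the delicate step to be exactly the annular estimate $J_j\le C_d$ (the places where one could lose control are the behaviour of $\bar\eta$ near $\sigma=1$ and the size of $\lambda_j$ for large $j$, both tamed by $\bar\eta(1)=0$ and $\lambda_j\le\beta_j^2$). The rest is bookkeeping, but it is worth stressing that getting the factor $1/\alpha_k$ --- rather than a plain constant --- in front of the error hinges on keeping the identity $W_0(\psi\tilde h)-W_0(\tilde h)=\int_{B_\rho}|\nabla((1-\psi)\tilde h)|^2$ exact (a direct application of Young's inequality to $\nabla(\psi\tilde h)$ would cost a multiplicative constant $\ge2$ on $\int_{B_\rho}|\nabla\tilde h|^2$ and ruin the argument) and on the inequality $2^{-\beta_j}\le4^{-\alpha_j}$, which ties the cut-off radius $\rho/2$ to the decay of the Fourier modes.
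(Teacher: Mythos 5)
Your proposal is correct, and it reaches the two inequalities of the lemma by a genuinely different mechanism than the paper. The paper never touches the explicit form of $\psi^{\rho}$ beyond its harmonicity: it proves, for an arbitrary $f\in H^1(B_1)$, the soft estimate $\int_{B_1}|\nabla(\psi f)|^2\le\int_{B_1}|\nabla f|^2+\frac{C_0}{\rho}\int_{\partial B_\rho}f^2$ by expanding the square, restricting $\nabla\psi$ to the annulus, and integrating by parts using $\Delta\psi=0$ and $\psi=0$ on $\partial B_{\rho/2}$ (so only the flux $\partial_n\psi\le C_0/\rho$ on $\partial B_\rho$ enters); it then plugs in $f=r^{\alpha_j}\phi_j$, for which both sides are computed exactly, getting the per-mode relative error $C_0\rho^{2\alpha_j+d-2}/\alpha_j$, and sums using orthogonality and the monotonicity of $\rho^{\beta_j}/\alpha_j$ in $j$. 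You instead exploit the harmonicity of $\tilde h$ (rather than of $\psi$) to get the exact Pythagorean splitting $W_0(\psi\tilde h)=W_0(\tilde h)+\int_{B_\rho}|\nabla((1-\psi)\tilde h)|^2$, and then estimate the correction per mode through the explicit profile of $\psi^{\rho}$, the scaling $r=\rho\sigma$, and the uniform bound $J_j\le C_d$ via $\bar\eta(1)=0$, $\lambda_j\le\beta_j^2$ and the Beta integral; the missing factor $1/\alpha_k$ is then recovered from $\alpha_j\ge\alpha_k$ when summing. What the paper's route buys is brevity and independence from the closed form of the cut-off; what yours buys is an identity with no Young-type loss and an explicitly nonnegative correction term, at the price of the annular computation. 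Two small remarks: your justification of the key orthogonality through ``$(1-\psi)\tilde h\in H^1_0(\bC_S)$'' is a bit breezy for a general open $S$ and for the cone vertex, but it is harmless because the orthogonality can be checked mode by mode with a one-line radial computation, namely $\int_0^1\big(-\alpha\psi' r^{2\alpha+d-2}+(\alpha^2+\lambda_j)(1-\psi)r^{2\alpha+d-3}\big)dr=\alpha\big[(1-\psi)r^{2\alpha+d-2}\big]_0^1=0$, which fits naturally into the Fourier decomposition you use anyway; and your final constant in \eqref{e:gigi} is $\frac{C_0\rho^{\beta_k}}{\alpha_k-1}$, which is stronger than (hence implies) the stated $\frac{C_0\,\alpha_k\,\rho^{\beta_k}}{\alpha_k-1}$, exactly as in the paper's last step, where positivity of $W_0(\tilde h)$ for $\alpha_k>1$ is what allows the substitution.
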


\begin{proof} 
	For any function $f\in H^1(B_1)$ we have 
	\begin{align*}
	\int_{B_1}|\nabla (\psi f)|^2
	&\leq  \int_{B_1}|\psi\nabla f+ f\nabla \psi |^2 
	\leq \, \int_{B_1} \psi^2 |\nabla f|^2 +\, \int_{B_1} \nabla\psi\cdot\nabla(f^2\psi) \\
	&\leq \, \int_{B_1} \psi^2 |\nabla f|^2 +\, \int_{B_{\rho}\setminus B_{\rho/2}} \nabla\psi\cdot\nabla(f^2\psi)\\
	&\leq \, \int_{B_1} |\nabla f|^2 +\, \int_{\partial B_{\rho}} f^2\frac{\partial \psi}{\partial n} \leq \int_{B_1}  |\nabla f|^2 + \frac{C_0}{\rho}\,\int_{\partial B_{\rho}}f^2.
	\end{align*}
	where $C_0$ is a dimensional constant. 
	
	If $f$ is of the form $f(r,\theta)=r^\alpha\phi_j(\theta)$ for some $\alpha>0$ and $j\ge 1$, then we have 
	$$\int_{B_1}  |\nabla f|^2=\int_0^1r^{d-1}\int_{\partial B_1}\Big(\alpha^2r^{2(\alpha-1)}\phi_j^2(\theta)+r^{2\alpha-2}|\nabla_\theta\phi_j|^2\Big)dr\,d\theta=\frac{\alpha^2+\lambda_j}{d+2(\alpha-1)},$$
	so that 
	$$\int_{\partial B_{\rho}}f^2=\rho^{2\alpha+d-1}= \rho^{2\alpha+d-1}\frac{d+2(\alpha-1)}{\alpha^2+\lambda_j}\int_{B_1}  |\nabla f|^2,$$
	which gives 
	\begin{equation}\label{e:pipita}
	\int_{B_1}|\nabla (\psi f)|^2\leq \, \left(1+C_0\rho^{2\alpha+d-2}\frac{d+2(\alpha-1)}{\alpha^2+\lambda_j}\right)\int_{B_1} |\nabla f|^2.
	\end{equation}
	By \eqref{e:pipita}, applied to each component of $\tilde h$, and the orthogonality of $\phi_j$ we have 
	\begin{align*}
	\int_{B_1}|\nabla (\psi \tilde h)|^2&=\sum_{i=k}^\infty \int_{B_1}|\nabla (\psi c_ir^{\alpha_i} \phi_i)|^2\\
	&\le \sum_{i=k}^\infty \left(1+C_0\rho^{2\alpha_i+d-2}\frac{d+2(\alpha_i-1)}{\alpha_i^2+\lambda_i}\right)\int_{B_1}|\nabla (c_ir^{\alpha_i} \phi_i)|^2\\
	&=\sum_{i=k}^\infty \left(1+\frac{C_0\rho^{2\alpha_i+d-2}}{\alpha_i}\right)\int_{B_1}|\nabla (c_ir^{\alpha_i} \phi_i)|^2\\
	&\le\left(1+\frac{C_0\rho^{2\alpha_k+d-2}}{\alpha_k}\right)\sum_{i=k}^\infty \int_{B_1}|\nabla (c_ir^{\alpha_i} \phi_i)|^2=\left(1+\frac{C_0\rho^{2\alpha_k+d-2}}{\alpha_k}\right)\int_{B_1}|\nabla\tilde h|^2.
	\end{align*}
	Now by the definition of $W_0$ and the fact that $\psi\tilde h=\tilde h=c$ on $\partial B_1$ we get 
	\begin{align*}
	W_0(\psi \tilde h)&=\int_{B_1}|\nabla (\psi \tilde h)|^2-\int_{\partial B_1}|c|^2\\
	&\le \left(1+\frac{C_0\rho^{2(\alpha_k-1)+d}}{\alpha_k}\right)\int_{B_1}|\nabla \tilde h|^2-\int_{\partial B_1}|c|^2\\
	&=\left(1+\frac{C_0\rho^{2(\alpha_k-1)+d}}{\alpha_k}\right)W_0(\tilde h)+\frac{C_0\rho^{2(\alpha_k-1)+d}}{\alpha_k}\int_{\partial B_1}|c|^2.
	\end{align*}
	If $\alpha_k>1$, then 
	$$W_0(\tilde h)=\sum_{j=k}^\infty |c_j|^2(\alpha_j-1)\ge (\alpha_k-1)\sum_{j=k}^\infty |c_j|^2= (\alpha_k-1)\int_{\partial B_1}|c|^2,$$
	which implies \eqref{e:gigi}.
\end{proof}

 \section{The epiperimetric inequality}\label{s:epi}
 
This section is dedicated to the proofs of the various epiperimetric inequalities \eqref{e:epi_1}, \eqref{e:epi_13}, \eqref{e:epi_vec_low_density} and \eqref{e:epi_vec}, as sketched in the introduction. First we prove a series of technical lemmas, which corresponds to the different possible lengths of the support $S=\{c>0\}$ of the non-negative trace $c\ge0$, that is $\ds 2\pi -\delta_0\leq |S|$, $\ds |S|\leq \pi-\delta_0$ and $\ds \pi-\delta_0\leq|S|\leq 2\pi-\delta_0$. Most of the results are valid in any dimension $d\ge 2$; only for the last case we will need to assume $d=2$. Finally we will combine the lemmas to prove the various versions of the epiperimetric inequality.

\subsection{Improvement on the very large cones}\label{ss:Sbbig}
In this subsection we consider the case $|S|\ge d\omega_d-\eta_0$, where $d\ge 2$ and $\eta_0>0$ is a sufficiently small dimensional constant.

\begin{lemma}\label{l:very_large_cones}
Let $c\in H^1(\partial B_1)$ and $S=\{c>0\}\subset \partial B_1$. For every $\alpha>0$, there are constants $\eta_0>0$ and $\eps_0>0$, depending only on $\alpha$ and the dimension of the space, such that 
$$\text{if}\quad\int_{\partial B_1}c>\alpha\quad\text{and}\quad |S|\ge d\omega_d-\eta_0\ ,\quad\text{then}\quad W^\square(\tilde h)-\Theta^\square\le (1-\eps_0)\,\left(W^\square(z)-\Theta^\square\right),$$
where $\square=OP,DP$, $\ds \Theta^{OP}=\frac{\omega_d}2$ and $\ds \Theta^{DP}=(\lambda_1+\lambda_2)\frac{\omega_d}2$, and $z$ and $\tilde h$ are respectively the one-homogeneous and the harmonic extensions of $c$ in $B_1$. 
\end{lemma}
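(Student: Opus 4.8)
\textbf{Proof plan for Lemma \ref{l:very_large_cones}.}

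The plan is to compare the harmonic and the radial extensions of $c$ using the Fourier expansion machinery already set up in Lemma \ref{l:harm_ext}, and to control the measure terms by the fact that, when $|S|$ is close to the full sphere, the positivity set of $\tilde h$ and of $z$ differ from $B_1$ by a set of measure at most $|\de B_1 \setminus S| \cdot 1 \le \eta_0$ (after integrating the radial variable), so the measure contribution to $W^\square(\tilde h) - W^\square(z)$ is bounded by $C\eta_0/r^d = C\eta_0$ at $r=1$. The key point is that the \emph{energy} part $W_0(\tilde h) - W_0(z)$ carries a strictly negative gain that does not degenerate as $|S|\to d\omega_d$ \emph{provided} $c$ is not too close to a constant multiple of the first eigenfunction — but here $S$ is almost all of $\de B_1$, so $\lambda_1(S)$ is close to $0$ and $\alpha_1(S)$ is close to $0$ as well, which at first sight kills the gain in \eqref{e:pjanic}. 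The way out is that the constraint $\int_{\de B_1} c > \alpha$ forces the zeroth-order (constant) part of $c$ to be bounded below, and on a nearly-full sphere the harmonic extension of a near-constant function has energy strictly smaller than the radial extension by a definite amount.

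Concretely, I would proceed as follows. First, fix $\alpha>0$ and write $c$ in the eigenbasis on $S$; by \eqref{e:ringhio} we have $W_0(\tilde h) = \sum_j |c_j|^2(\alpha_j - 1)$ and $W_0(z) = \sum_j |c_j|^2\big(\tfrac{1+\lambda_j}{d}-1\big)$. Since $0 < \alpha_1(S) < 1$ for any proper subset $S$, the first term $j=1$ contributes a negative quantity to $W_0(\tilde h)$; I would show that as $|S| \to d\omega_d$ one has $\alpha_1(S) \to 0$ at a quantitative rate (eigenvalue continuity / domain monotonicity for the Dirichlet Laplace--Beltrami operator, plus the Faber--Krahn-type lower bound on how fast $\lambda_1(S)$ decays), while $|c_1| \ge c(\alpha) > 0$ because $\int_{\de B_1} c > \alpha$ and $\phi_1$ is itself near-constant and positive. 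Then in \eqref{e:pjanic} the $j=1$ summand equals $-\tfrac{1-\eps}{d}|c_1|^2(\alpha_1-1)\big(\alpha_1 - 1 + d - \tfrac{d}{1-\eps}\big)$, and for $\eps$ small this is $\le -c_1(\alpha, d) < 0$ uniformly, since $\alpha_1 - 1 < 0$ is bounded away from $0$ (indeed $\alpha_1 < 1$ always, so $1-\alpha_1$ is bounded below once $|S| < d\omega_d$, and it is bounded above by $1$). For the higher modes $j\ge 2$ the corresponding summands in \eqref{e:pjanic} are automatically $\le 0$ once $\eps \le \tfrac{\alpha_2(S)-1}{d+\alpha_2(S)-1}$ if $\alpha_2 > 1$; if some higher $\alpha_j \le 1$ they still contribute non-positively to the comparison because $(\alpha_j - 1)(\alpha_j - 1 + d - \tfrac d{1-\eps})$ has the right sign for $\eps$ small. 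Thus $W_0(\tilde h) - (1-\eps)W_0(z) \le -c_1(\alpha,d) < 0$, uniformly.

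Second, the measure terms: $\big|\{\tilde h > 0\}\cap B_1\big| \le \big|\bC_S\big| \le \big|\{z>0\}\cap B_1\big| + 0$ — in fact $\{\tilde h>0\}\cap B_1 = \bC_S = \{z>0\}\cap B_1$ up to null sets, so the measure contributions to $W^\square(\tilde h)$ and $W^\square(z)$ are \emph{equal} (both equal $\tfrac1{r^d}|\bC_S|$ at $r=1$, with the $\lambda_i$-weights for $\square=DP$). Hence $W^\square(\tilde h) - W^\square(z) = W_0(\tilde h) - W_0(z)$ and likewise after inserting the $(1-\eps)$ factor the measure discrepancy is only $\eps\,|\bC_S| \le \eps\, d\omega_d /d = \eps\,\omega_d$, which can be absorbed into the strictly negative energy gain $-c_1(\alpha,d)$ by choosing $\eps_0 = \eps_0(\alpha,d)$ small enough. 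This yields $W^\square(\tilde h) - \Theta^\square \le (1-\eps_0)\big(W^\square(z) - \Theta^\square\big)$ once we also check that $W^\square(z) - \Theta^\square \ge 0$, which follows from Weiss monotonicity ($z$ is $1$-homogeneous so $W^\square(z,r) \equiv W^\square(z)$, hence $W^\square(z) \ge \lim_{r\to 0} W^\square = \Theta^\square$ — more simply, it is the statement that the density is the minimum of $W$ over $1$-homogeneous competitors, combined with the explicit value $\Theta^\square$).

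\textbf{Main obstacle.} The delicate step is the quantitative behavior of $\alpha_1(S)$ and $\phi_1$ as $|S| \uparrow d\omega_d$: I need $1 - \alpha_1(S)$ bounded below (easy: $\alpha_1 < 1$ strictly for any proper subdomain, and by monotonicity $\alpha_1(S) \le \alpha_1(\de B_1 \setminus \{pt\}) < 1$, a fixed constant) and simultaneously $|c_1| = \big|\int_{\de B_1} c\,\phi_1\big|$ bounded below in terms of $\alpha$ — for the latter one uses that $\phi_1 \ge 0$, $\int \phi_1^2 = 1$, and $\phi_1 \to |d\omega_d|^{-1/2}$ in $L^2$ as $|S| \to d\omega_d$, so $\int_{\de B_1} c\,\phi_1$ is comparable to $\tfrac{1}{\sqrt{d\omega_d}}\int_{\de B_1} c \ge \tfrac{\alpha}{\sqrt{d\omega_d}}$ up to an error $o_{\eta_0}(1)\,\|c\|_{L^2}$; since $c$ may have large $L^2$ norm this last point needs a small additional argument, e.g. truncating $c$ at a large level (minimizers are Lipschitz, but here $c$ is only $H^1$, so one instead notes the competitor inequality is scale-free and it suffices to prove it for $c$ normalized, or one keeps the $\|c\|_{L^2}$-dependent error and absorbs it using that the negative energy gain also scales like $|c_1|^2 \gtrsim \|c\|_{L^2}^2$ — wait, that is false, so the cleanest route is: reduce to $\int_{\de B_1} c = 1$ by homogeneity, bound $|c_1| \ge \tfrac12 (d\omega_d)^{-1/2}$ for $\eta_0$ small, and note the higher modes only help). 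Getting this normalization/truncation bookkeeping right, so that all the "uniformly in $c$" claims are genuinely uniform, is where the real care is needed.
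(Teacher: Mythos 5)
There is a genuine gap, and it sits exactly where you located the ``main obstacle''. Your gain is supposed to come from the first Dirichlet mode of $S$ under the cone-based extension, and for this you need, uniformly over all $S$ with $|S|\ge d\omega_d-\eta_0$, both that $1-\alpha_1(S)$ is bounded below and that $|c_1|=\big|\int_{\partial B_1}c\,\phi_1\big|$ is bounded below in terms of $\alpha$. Neither holds. The monotonicity you invoke goes the wrong way: from $S\subset\partial B_1\setminus\{p\}$ one gets $\lambda_1(S)\ge\lambda_1(\partial B_1\setminus\{p\})$, i.e.\ a \emph{lower} bound on $\alpha_1(S)$, not an upper one. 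In fact $\alpha_1(S)$ need not be bounded away from $1$ (nor even be $<1$): already in $d=2$ take $S$ to be the union of two arcs, each of length slightly less than $\pi$, with total length $2\pi-\eta_0$. Then $\alpha_1(S)>1$ but arbitrarily close to $1$, the first eigenfunction $\phi_1$ is supported in a single arc (so it is nowhere near constant and $c_1$ admits no lower bound in terms of $\int c$), and for boundary data concentrated on the principal modes your cone-based competitor gives $W^{OP}(\tilde h)-\frac\pi2$ and $W^{OP}(z)-\frac\pi2$ both equal to $\frac\pi2-\frac{\eta_0}2$ up to an $o(1)$ Dirichlet correction, so no uniform $\eps_0$ can come out. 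In $d\ge3$ the situation is worse: a complement of tiny measure can have arbitrary capacity, so $\lambda_1(S)$ is not controlled by $|S|$ at all and ``$\phi_1\to(d\omega_d)^{-1/2}$ in $L^2$'' fails. Finally, the normalization ``reduce to $\int c=1$ by homogeneity'' is not available, because $W^\square$ is not homogeneous in $c$ (the measure term does not scale), and the losses of order $\eps^2\|c\|_{L^2}^2$ coming from modes with $\alpha_j$ slightly above $1$ in \eqref{e:pjanic} cannot be absorbed uniformly in $c$ --- difficulties you yourself flagged but did not resolve.

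The paper's proof avoids all spectral information about $S$: there $\tilde h$ is the harmonic extension in the \emph{whole ball} $B_1$ (so your identity $\{\tilde h>0\}\cap B_1=\bC_S$ is neither true nor used), and $c$ is expanded in the eigenbasis of the full sphere, where the first eigenfunction is exactly the constant $(d\omega_d)^{-\sfrac12}$, hence $c_1^2=(d\omega_d)^{-1}\big(\int_{\partial B_1}c\big)^2\ge \alpha^2/(d\omega_d)$ with no error term. The gain then comes from the zeroth mode: its harmonic extension is the constant itself ($0$-homogeneous), producing $-c_1^2-(1-\eps)\big(\tfrac1d-1\big)c_1^2\le-\tfrac{c_1^2}{d}$, while all higher full-sphere modes have homogeneities $\ge1$ and contribute nonpositively for $\eps$ small (as in \eqref{e:pjanic}--\eqref{e:dybala}). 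The measure terms are handled crudely: $|\{\tilde h>0\}|\le\omega_d$ versus $|\{z>0\}|\ge\omega_d-\eta_0/d$, giving a loss at most $\tfrac{\eta_0}d+\eps\,\omega_d$, absorbed by choosing $\eta_0,\eps_0$ with $\alpha^2\ge d\omega_d(\eta_0+\eps_0 d\omega_d)$; for $\square=DP$ the extra term $\lambda_2|\{\tilde h<0\}|$ is made small by a compactness and maximum-principle argument, a point your proposal does not address. To repair your argument you would have to switch to this full-ball extension and full-sphere expansion; the cone-based competitor genuinely fails when $S$ is disconnected.
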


\begin{proof}
	Let $\{\phi_j\}_j$ be a complete orthonormal system of eigenfunction on $\partial B_1$ with $\ds\phi_1=(d\omega_d)^{-1/2}$. We decompose the function $c$ as follows
	$$c(\theta)=\sum_{j=1}^{\infty}c_j\phi_j(\theta)=\frac{c_1}{\sqrt{d\omega_d}}+\sum_{j=2}^{\infty}c_j\phi_j(\theta)=:\frac{c_1}{\sqrt{d\omega_d}}+\bar c(\theta).$$
	We use the notation  
	$$\bar z(r,\theta)=r\bar c(\theta)=\sum_{j=2}^{\infty}c_jr\phi_j(\theta)\qquad\text{and}\qquad \bar h(r,\theta)=\sum_{j=2}^{\infty}c_jr^{\alpha_j}\phi_j(\theta).$$
	Thus we have 
	$$z(r,\theta)=\frac{c_1}{\sqrt{d\omega_d}}r+\bar z(r,\theta)\qquad\text{and}\qquad \tilde h(r,\theta)= \frac{c_1}{\sqrt{d\omega_d}}+\bar h(r,\theta).$$
	For the case $\square=OP$, let $0<\eps\le 1/3$ and notice that
	\begin{align*}
	\left(W(\tilde h)-\frac{\omega_d}2\right)-&(1-\eps)\left(W(z)-\frac{\omega_d}2\right)=W_0(\tilde h)-(1-\eps)W_0(z)+\frac{{\omega_d}}{2}-(1-\eps)\left(\frac{\omega_d}2-\frac{\eta_0}d\right)\\
	&=-c_1^2+W_0(\bar h)-(1-\eps)\left(W_0\left(\frac{c_1 r}{\sqrt{d\omega_d}}\right)+W_0(\bar z)\right)+\frac{{\omega_d}}{2}-(1-\eps)\left(\frac{\omega_d}2-\frac{\eta_0}d\right)\\
	&\le-c_1^2-(1-\eps)W_0\left(\frac{c_1 r}{\sqrt{d\omega_d}}\right)+\frac{{\omega_d}}{2}-(1-\eps)\left(\frac{\omega_d}2-\frac{\eta_0}d\right)\\
	&=-c_1^2-(1-\eps)\left(\frac1d-1\right)c_1^2+\frac{\omega_d}{2}-(1-\eps)\left(\frac{\omega_d}2-\frac{\eta_0}d\right)\\
	&=-\left(\frac1d+\frac{\eps(d-1)}d\right)c_1^2+\frac{\eta_0}{d}+\eps\left(\frac{\omega_d}2-\frac{\eta_0}d\right)\le\frac1d\left(-c_1^2+\eta_0+\eps d\omega_d\right).
	\end{align*}  
	On the other hand we have that 
	$$\ds c_1^2=\left(\int_{\partial B_1}c\,\phi_1\right)^2=\frac{1}{d\omega_d}\left(\int_{\partial B_1}c\right)^2\ge \frac{\alpha^2}{d\omega_d},$$
and so, choosing $\eps$ and $\eta_0$ such that $\alpha^2\ge d\omega_d(\eta_0+\eps d\omega_d)$ we get the claim.\\
If $\square =DP$, we have, by similar computations and using $\big|\{\bar h>0\}\big|\leq \omega_d$,
\begin{align*}
\left(W(\tilde h)\right.&\left.-(\lambda_1+\lambda_2)\frac{\omega_d}2\right)-(1-\eps)\left(W(z)-(\lambda_1+\lambda_2)\frac{\omega_d}2\right)\\
	&\leq -\left(\frac1d+\frac{\eps(d-1)}d\right)c_1^2+\lambda_1\left(\frac{\eta_0}{d}+\eps\left(\frac{\omega_d}2-\frac{\eta_0}d\right)\right)+\lambda_2\left(|\{\bar h<0\}|-\frac{\eta_0}{d}-\eps\left(\frac{\omega_d}2-\frac{\eta_0}d\right) \right)\\
	&\le\frac1d\left(-c_1^2+\eta_0+\eps d\omega_d+|\{\bar h<0\}|\right).
	\end{align*}
Now a simple compactness argument on harmonic functions and the maximum principle show that for every $\delta>0$ there exists $\eta_0>0$ small enough such that, if $|S|\geq d\omega_d-\eta_0$, then $|\{\bar h<0\}|\leq \delta$, and so the conclusion follows as before by choosing $\eta_0$ small enough. 
\end{proof}

\subsection{Improvement on the small cones $S_{small}$}\label{ss:small}
In this subsection we consider the situation where $d\ge 2$, $\ds|S|\le \frac{d\omega_d}2-\delta_0$ and $\delta_0>0$ is a dimensional constant. Using the fact that, under these assumptions, the first eigenvalue of $S$ is strictly bigger than $(d-1)$, we can prove the following result directly for vector-valued functions $c$.
 
\begin{lemma}[Small cones]\label{l:small_cones}
Let $n\ge 1$ and $B_1\subset\R^d$ with $d\ge 2$. For every $\delta_0>0$, there are constants $\eps_1,\rho_1>0$, depending on $\delta_0$, $d$ and $n$, such that if the function $c\in H^{1}(\partial B_1; \R^n)\cap C(\partial B_1; \R^n)$, supported on the open set $S=\{|c|>0\}\subset B_1$, is such that $\ds|S|\le \frac{d\omega_d}{2}-\delta_0$, then 
\begin{equation}\label{e:improv_small}
W_0(\psi^{\rho_1} \tilde h)+\lambda\big|\{\psi^{\rho_1} |\tilde h|>0\}\big|\le\left(1-\eps_1\right)\Big(W_0(z)+\lambda\big|\{|z|>0\}\big|\Big) \quad\mbox{for every}\quad\lambda>0\,,
\end{equation}
where $z$ and $\tilde h$ are the one-homogeneous and  the harmonic  extensions of $c$ in $B_1$, defined in \eqref{e:har&rad}, and $\psi^\rho$ is the cut-off function defined in \eqref{e:psirho}.
\end{lemma}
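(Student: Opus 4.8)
\textbf{Proof proposal for Lemma \ref{l:small_cones}.}

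The plan is to combine three ingredients already available in the excerpt: the Fourier-series computation of $W_0$ for the harmonic and radial extensions (Lemma \ref{l:harm_ext}), the energy estimate for the measure-corrected competitor $\psi^\rho\tilde h$ (Lemma \ref{l:mc_competitor}), and a spectral gap coming from the geometric constraint $|S|\le \frac{d\omega_d}{2}-\delta_0$. The key point is that the Faber--Krahn inequality on the sphere gives a dimensional constant $\beta_0=\beta_0(\delta_0,d)>1$ such that the first homogeneity $\alpha_1(S)\ge \beta_0$: since $|S|$ is strictly less than the measure of a half-sphere, the first Dirichlet eigenvalue $\lambda_1(S)$ is strictly larger than $\lambda_1(\text{half-sphere})=d-1$, quantitatively in terms of $\delta_0$, and hence $\alpha_1(S)$ (the positive root of $\alpha(\alpha+d-2)=\lambda_1$) exceeds $1$ by a dimensional amount. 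Applying this uniformly over the (at most countably many) components of $S$ — or simply using that each $\alpha_j\ge\alpha_1\ge\beta_0$ — we are in the regime $\alpha_k>1$ of Lemmas \ref{l:harm_ext} and \ref{l:mc_competitor}.

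First I would fix $\eps$ with $0<\eps\le \frac{\beta_0-1}{d+\beta_0-1}$, so that \eqref{e:dybala} yields $W_0(\tilde h)\le (1-2\eps)W_0(z)$ for a slightly smaller $\eps$ — more precisely, choosing $\eps$ strictly below the threshold, \eqref{e:pjanic} gives $W_0(\tilde h)-(1-2\eps)W_0(z)\le 0$ with room to spare, quantified by $\eps$, $\beta_0$ and $d$. Note $W_0(z)\ge 0$ here since $W_0(z)=\frac1d\sum|c_j|^2(\alpha_j-1)(\alpha_j+d-1)\ge 0$ when all $\alpha_j>1$; in fact $W_0(z)\ge \frac{(\beta_0-1)(\beta_0+d-1)}{d}\int_{\partial B_1}|c|^2$, which will absorb the measure defect. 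Next, by Lemma \ref{l:mc_competitor} applied with $\alpha_k\ge \beta_0>1$ and \eqref{e:gigi}, for $\rho_1$ small (dimensional, depending on $\delta_0$) we get $W_0(\psi^{\rho_1}\tilde h)\le (1+C\rho_1^{2(\beta_0-1)+d})\,W_0(\tilde h)\le (1-\eps)W_0(z)$, after choosing $\rho_1$ so that the multiplicative error is beaten by the gap from the previous step.

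It remains to handle the measure terms. We have $\{|z|>0\}=\bC_S$, so $|\{|z|>0\}|=\frac{|S|}{d}\ge$ a fixed positive amount; on the other side $\{\psi^{\rho_1}|\tilde h|>0\}\subset \bC_S\setminus B_{\rho_1/2}$, hence $|\{\psi^{\rho_1}|\tilde h|>0\}|\le |\{|z|>0\}| - \omega_d(\rho_1/2)^d$, i.e. the corrected competitor strictly \emph{decreases} the measure by a fixed amount $c_d\rho_1^d$. Thus for every $\lambda>0$,
\begin{align*}
W_0(\psi^{\rho_1}\tilde h)+\lambda\big|\{\psi^{\rho_1}|\tilde h|>0\}\big|
&\le (1-\eps)W_0(z)+\lambda\big|\{|z|>0\}\big|-\lambda\,c_d\rho_1^d\\
&\le (1-\eps)\Big(W_0(z)+\lambda\big|\{|z|>0\}\big|\Big),
\end{align*}
where in the last inequality I used $\lambda\,|\{|z|>0\}|\ge 0$ is nonnegative so dropping the $-\lambda c_d\rho_1^d$ against the factor $(1-\eps)$ needs a tiny bit of care: one writes $\lambda|\{|z|>0\}| - \lambda c_d\rho_1^d \le (1-\eps)\lambda|\{|z|>0\}|$ iff $\eps\,\lambda|\{|z|>0\}|\le \lambda c_d\rho_1^d$, i.e. iff $\eps |S| \le d\,c_d\rho_1^d$; since $|S|\le d\omega_d$, it suffices to \emph{further} shrink $\eps$ (depending on $\rho_1$, hence on $\delta_0$ and $d$) so that $\eps\, d\omega_d\le d c_d\rho_1^d$. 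Relabelling the final $\eps$ as $\eps_1$ completes the proof.

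The main obstacle is the bookkeeping of the two smallness parameters: $\rho_1$ must be small enough that the Dirichlet-energy error $C\rho_1^{2(\beta_0-1)+d}$ in \eqref{e:gigi} is controlled by the spectral gap, and then $\eps_1$ must be small enough (after $\rho_1$ is fixed) that the $W_0$-gain dominates and the measure gain from cutting the ball $B_{\rho_1/2}$ dominates the $\eps_1$-loss on the (bounded) measure term $|\{|z|>0\}|$. The order of quantifiers — fix $\beta_0$ from $\delta_0$; then $\rho_1$; then $\eps_1$ — is exactly what makes the constants depend only on $\delta_0$, $d$ (and trivially $n$, through nothing, since the vector case splits componentwise in $W_0$ and $|\{|\cdot|>0\}|$ depends only on the support). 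No genuinely new analytic input is needed beyond the quantitative Faber--Krahn / eigenvalue bound $\alpha_1(S)\ge\beta_0>1$, which is classical.
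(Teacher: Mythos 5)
Your proposal is correct and follows essentially the same route as the paper: the spectral gap $\alpha_1(S)\ge 1+\gamma_0(\delta_0,d)$ coming from $|S|\le\frac{d\omega_d}{2}-\delta_0$, the energy gain \eqref{e:dybala} for the harmonic extension, the cut-off estimate \eqref{e:gigi} with $\rho_1$ chosen so that the error $C_0\rho_1^{d}/\gamma_0$ is beaten by the gap, and finally the measure gain from removing $B_{\rho_1/2}$, with the same order of choices ($\gamma_0$, then $\rho_1$, then $\eps_1$). The only slip is in the measure bookkeeping: since $\spt(\tilde h)\subset \bC_S$, cutting out $B_{\rho_1/2}$ removes $\big|\bC_S\cap B_{\rho_1/2}\big|=(\rho_1/2)^d\,\big|\{|z|>0\}\big|=\frac{|S|}{d}(\rho_1/2)^d$, not $\omega_d(\rho_1/2)^d$ as you wrote, so your displayed bound $\big|\{\psi^{\rho_1}|\tilde h|>0\}\big|\le\big|\{|z|>0\}\big|-\omega_d(\rho_1/2)^d$ is literally false whenever $|S|<d\omega_d$; using the correct multiplicative identity $\big|\{\psi^{\rho_1}|\tilde h|>0\}\big|=\big(1-(\rho_1/2)^d\big)\big|\{|z|>0\}\big|$ (which is exactly what the paper uses) your absorption step reduces to the cleaner condition $\eps_1\le(\rho_1/2)^d$, no upper bound on $|S|$ needed, and the rest of your argument goes through unchanged.
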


\begin{proof}
We first notice that if $\ds|S|\le \frac{d\omega_d}{2}-\delta_0$, then $\alpha_1:=\alpha_1(S)= 1+\gamma_0$, where $\gamma_0>0$ is a constant depending only on the dimension and $\delta_0$.
By Lemma \ref{l:mc_competitor} and Lemma \ref{l:harm_ext} we have 
\begin{align*}
W_0(\psi^\rho\tilde h)&\le\left(1+\frac{C_0\,\alpha_1\,\rho^{2(\alpha_1-1)+d}}{\alpha_1-1}\right)W_0(\tilde h)\\
&\le\left(1+\frac{C_0\,\rho^{d}}{\alpha_1-1}\right)\left(1-\frac{\alpha_1-1}{d+\alpha_1-1}\right)W_0(z)=\left(1+\frac{C_0\rho^{d}}{\gamma_0}\right)\left(1-\frac{\gamma_0}{d+\gamma_0}\right)W_0(z)\,.
\end{align*}
On the other hand since $\psi=0$ in $B_{\rho/2}$ we get 
$$\big|\{\psi^\rho |\tilde h|>0\}\big|=\left(1-(\rho/2)^d\right)\big|\{|z|>0\}\big|.$$
It is now sufficient to choose $\eps_1$ and $\rho_1$ such that 
\begin{equation}\label{e:inlemmathincone}
1-\frac{\rho_1^d}{2^d}\le 1-\eps_1\qquad\text{and}\qquad
\left(1+\frac{C_0\rho_1^{d}}{\gamma_0}\right)\left(1-\frac{\gamma_0}{d+\gamma_0}\right)\le 1-\eps_1,
\end{equation}
and recall that, since $\alpha_1>1$, then $W_0(z)>0$ (cp. \eqref{e:ringhio}).
\end{proof}

%
%

\subsection{Improvement on the large cones over $S_{big}$}\label{ss:big}
In this subsection we consider arcs $S=S_{big}\subset \partial B_1$ of length $\pi-\delta_0\le |S|\le 2\pi-\delta_0$. The main result is the following

\begin{prop}[Big cones (OP) and (VP)]\label{p:S_big}
Let $B_1\subset\R^2$ and $c\in H^1(\partial B_1;\R^n)$ be a function such that $S:=\{|c|>0\}\subset\partial B_1$ is a connected arc and let $z$ be the one-homogeneous extension of $c$ in $B_1$. For every $\delta_0>0$, there exists a constant $\rho_2>0$, depending only on $\delta_0$, such that the following holds. If $\ds  \pi-\delta_0 \le |S|\le 2\pi-\delta_0$, then for every $\rho\leq \rho_2$ there exists a function $h_\rho\in H^1(B_1;\R^n)$ such that $h_\rho|_{S}=c$, $h_{\rho}=0$ on $\de B_1\setminus S$ and
	\begin{equation}\label{e:improv_big}
	W^{V}(h_\rho)-\frac\pi2\le\left(1-\rho^3\right)\,\left(W^{V}(z)-\frac\pi2\right)\,.
	\end{equation}
\end{prop}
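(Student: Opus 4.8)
\textbf{Plan of proof for Proposition \ref{p:S_big}.}

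The plan is to reduce the problem to the two essentially different pieces of the boundary datum on the connected arc $S$: the first eigenfunction $\phi_1$ on $S$, and everything of higher frequency. Write $c(\theta)=c_1\phi_1(\theta)+g(\theta)$ on $S$, where $c_1=\int_S c\,\phi_1$ and $g=\sum_{j\ge 2}c_j\phi_j$ carries the higher frequencies, and split the competitor accordingly. First I would treat the higher-frequency part $g$: since $\pi-\delta_0\le|S|\le 2\pi-\delta_0$, the second eigenvalue $\lambda_2(S)$ exceeds $d-1=1$ by a geometric amount depending on $\delta_0$, so $\alpha_2(S)>1$; hence the harmonic extension $\bar h$ of $g$, cut off with $\psi^\rho$ as in Lemma \ref{l:mc_competitor}, already gains a definite amount in $W_0$ by \eqref{e:gigi} combined with \eqref{e:dybala}, and the measure it occupies inside $B_\rho$ is strictly reduced. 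This is exactly the $S_{small}$-type argument and uses only $d=2$ through the value of the spectral gap.

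The heart of the matter is the first-frequency piece $c_1\phi_1$. Here $\alpha_1(S)$ can equal or be below $1$ (for $|S|=\pi$ it is exactly $1$, giving the half-plane solution $\max\{0,e\cdot x\}$), so the harmonic extension gains nothing in $W_0$ and may lose measure; one cannot beat the energy by a linear estimate. The idea, following Reifenberg and White, is to use an \emph{internal variation}: deform the graph of the one-homogeneous extension $z$ of $c_1\phi_1$ by a vector field supported in the small ball $B_\rho$ that was freed up by the cut-off used on $g$, chosen to push the support of $\phi_1$ towards the true half-plane $\{e\cdot x>0\}$ whose trace is $\phi_1$ itself. Because the half-plane solution is the minimizer and the first variation of $W^V$ along this flow is strictly negative at first order in the displacement (the support of $\phi_1$ is not yet the optimal half-disk unless $|S|=\pi$, in which case one still has a gain from $g$ or from the freedom to enlarge $S$), one obtains $W^V(h_\rho)-\frac\pi2\le W^V(z)-\frac\pi2 - c\,\rho\,(W^V(z)-\frac\pi2)$ for a constant $c=c(\delta_0)$; the factor $\rho^3$ in \eqref{e:improv_big} is the (non-sharp) power one can afford after absorbing the cost of the cut-off near $\partial B_\rho$, the error from $g$ interacting with the displaced $\phi_1$, and the measure balance. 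The competitor $h_\rho$ is then the glued function: the displaced one-homogeneous extension of $c_1\phi_1$ plus $\psi^\rho\bar h$, suitably matched so that $h_\rho=c$ on $S$ and $h_\rho=0$ on $\partial B_1\setminus S$.

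The main obstacle is making the internal variation argument quantitative: one must (i) construct the vector field $X_\rho$ explicitly, supported in $B_\rho$, interpolating between a rotation/translation of the cone $\bC_S$ near the origin and the identity near $\partial B_\rho$; (ii) compute the first-order change of $\int|\nabla z|^2$, of $\int_{\partial B_1}|z|^2$ (which is unchanged, since $X_\rho=0$ on $\partial B_1$), and crucially of the measure term $|\{|z|>0\}\cap B_1|$ under the flow of $X_\rho$, showing the linear term has the right sign and size $\sim\rho\cdot(\text{defect})$; and (iii) control the second-order and cross terms so they are lower order in $\rho$. A delicate point is that when $\alpha_1(S)<1$ (i.e. $|S|>\pi$) the one-homogeneous extension $z$ is not stationary and one has to be careful that the variation decreases, not increases, the energy — this is where the assumption that $S$ is a single connected arc (so the cone is convex-in-angle and the half-plane is the unique competitor direction) is essential, and where restricting to $d=2$ lets one reduce everything to a one-parameter family of circular sectors. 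I would carry out these computations in the dedicated Subsection \ref{ss:big}, separating the purely Dirichlet-energy variation from the measure variation, and then collect the estimates into \eqref{e:improv_big}.
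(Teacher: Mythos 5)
Your plan is essentially the paper's own proof of Proposition \ref{p:S_big}: the same splitting $c=c_1\phi_1+g$, the same treatment of the higher frequencies by the cut-off harmonic extension (Lemmas \ref{l:mc_competitor} and \ref{l:harm_S_big}, exploiting the spectral gap $\alpha_2(S)>1$), and the same key device of an internal variation supported in the ball $B_\rho$ freed by the cut-off, which reparametrizes the angular opening of the cone over $S$ toward the half-plane (Lemmas \ref{l:frequenza_1}--\ref{l:frequenza_3}, with $\eps=-\delta$ and $\xi_\rho=\rho\,\xi(r/\rho)$ producing exactly the $\rho^3$ gain), followed by the gluing via disjoint supports and orthogonality. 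The only differences are presentational (you phrase the deformation as a first-variation/vector-field computation, the paper does the explicit change of variables and tracks the quadratic error terms directly), so the approach is the same.
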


In order to prove this proposition, we distinguish between high and linear frequencies of the boundary datum ad then we sum the respective contributions. In the rest of this subsection we set $W:=W^{V}$.

\subsubsection{The high frequencies on $S_{big}$}
\label{ss:gtheta}
In this subsection we consider the case when the boundary datum $c$ contains only high frequencies. The argument is very close to the one for $S_{small}$, the only difference being that the measure is not involved. The result below holds in any dimension.

\begin{lemma}[High frequencies on $S_{big}$]\label{l:harm_S_big}
Let $\delta_0>0$ and $S\subset\partial B_1$ be an open set such that $|S|\le d\omega_d-\delta_0$ and $\ds c=\sum_{i=2}^\infty c_i\phi_i$, where $\phi_i$ are as in Subsection \ref{ss:tildeh} and $c_i\in \R^n$. Let $z$ and $\tilde h$ be the functions defined in \eqref{e:har&rad}. There are dimensional constants $\eps_3, \rho_3>0$, such that 
$$W_0(\psi^{\rho}\tilde h)\le\left(1-\eps_3\right)W_0(z) \qquad \mbox{for every}\qquad \rho\leq \rho_3,$$
where $\psi^\rho$ is the function from Lemma \ref{l:mc_competitor}.
\end{lemma}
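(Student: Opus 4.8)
The statement concerns a boundary datum $c = \sum_{i=2}^\infty c_i\phi_i$ that contains \emph{only} high frequencies, i.e. the first eigenfunction $\phi_1$ of $S$ is absent from the expansion. The idea is to combine Lemma~\ref{l:harm_ext} (which computes $W_0(\tilde h)$ and $W_0(z)$ in Fourier coordinates) with Lemma~\ref{l:mc_competitor} (which controls the cost of the cut-off $\psi^\rho$), exactly as in the proof of Lemma~\ref{l:small_cones}, but keeping track of the second eigenvalue rather than the first. The key quantitative input is that since $|S| \le d\omega_d - \delta_0$, the second Dirichlet eigenvalue $\lambda_2(S)$ is bounded below by a dimensional constant strictly larger than $d-1$; equivalently $\alpha_2 = \alpha_2(S) \ge 1 + \gamma_0$ for some $\gamma_0 = \gamma_0(\delta_0, d) > 0$. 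This is where the monotonicity of eigenvalues with respect to domain inclusion, together with the explicit value $\lambda_2 = d-1$ on a half-sphere, enters; the gap $\delta_0$ in the measure of $S$ forces $S$ to be strictly contained in (a rotation of) a half-sphere after removing the bottom eigenmode, which is precisely what gives $\gamma_0 > 0$.

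\textbf{The steps.} First I would record that, because $c$ has no $\phi_1$-component, the harmonic extension $\tilde h = \sum_{i\ge 2} r^{\alpha_i} c_i \phi_i$ has lowest homogeneity $\alpha_2 \ge 1 + \gamma_0 > 1$, so Lemma~\ref{l:harm_ext} applies with $k=2$ and gives, by \eqref{e:dybala}, that for $\eps := \frac{\alpha_2 - 1}{d + \alpha_2 - 1} \ge \frac{\gamma_0}{d+\gamma_0}$ one has $W_0(\tilde h) \le (1-\eps) W_0(z) = \left(1 - \frac{\gamma_0}{d+\gamma_0}\right) W_0(z)$, and also $W_0(z) > 0$. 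Second, I would apply the second estimate \eqref{e:gigi} of Lemma~\ref{l:mc_competitor} with $k=2$: since $\alpha_2 > 1$,
\begin{equation*}
W_0(\psi^\rho \tilde h) \le \left(1 + \frac{C_0\,\alpha_2\,\rho^{2(\alpha_2 - 1) + d}}{\alpha_2 - 1}\right) W_0(\tilde h) \le \left(1 + \frac{C_0\,\rho^{d}}{\gamma_0}\right) W_0(\tilde h),
\end{equation*}
where I used $\alpha_2 - 1 \ge \gamma_0$ and $\rho^{2(\alpha_2-1)+d} \le \rho^d$ for $\rho \le 1$ (bounding $\alpha_2/(\alpha_2-1)$ crudely, or noting $\alpha_2$ is controlled from above since $\lambda_2(S) \le \lambda_2(B_1\text{-sphere portion})$ is finite). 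Chaining the two displays gives
\begin{equation*}
W_0(\psi^\rho \tilde h) \le \left(1 + \frac{C_0\,\rho^{d}}{\gamma_0}\right)\left(1 - \frac{\gamma_0}{d+\gamma_0}\right) W_0(z).
\end{equation*}
Third, I would choose $\rho_3 > 0$ (depending only on $d$ and $\delta_0$, hence dimensional once $\delta_0$ is fixed) small enough that $\left(1 + \frac{C_0 \rho^d}{\gamma_0}\right)\left(1 - \frac{\gamma_0}{d+\gamma_0}\right) \le 1 - \eps_3$ for all $\rho \le \rho_3$, with $\eps_3 := \frac{\gamma_0}{2(d+\gamma_0)}$, say. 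Since $W_0(z) \ge 0$ this yields $W_0(\psi^\rho \tilde h) \le (1-\eps_3) W_0(z)$ for every $\rho \le \rho_3$, which is the claim.

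\textbf{Main obstacle.} The only genuine point requiring care is the eigenvalue gap $\alpha_2(S) \ge 1 + \gamma_0$. The bound $|S| \le d\omega_d - \delta_0$ does not by itself prevent $\lambda_1(S)$ from being close to $d-1$ (think of a thin slit removed from a half-sphere), but here we are only using the \emph{second} eigenvalue, and I would argue as follows: if $\lambda_2(S)$ were arbitrarily close to $d-1$, then by compactness (Dirichlet eigenvalues are continuous under Hausdorff/measure-type convergence of domains in this setting, or one uses a direct concentration-compactness argument on the eigenfunctions) a limiting set $S_\infty$ with $|S_\infty| \le d\omega_d - \delta_0$ would have $\lambda_2(S_\infty) = d-1$, forcing $S_\infty$ to contain a half-sphere up to a null set — contradicting the strict measure deficit. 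This gives $\gamma_0 = \gamma_0(\delta_0,d) > 0$. Everything else is a routine application of the two preliminary lemmas and a final choice of small parameters.
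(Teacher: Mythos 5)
Your proof is correct and follows essentially the same route as the paper: the spectral gap $\alpha_2(S)\ge 1+\gamma_0(\delta_0,d)$ (which the paper simply asserts), Lemma \ref{l:harm_ext} applied with $k=2$, the cut-off estimate \eqref{e:gigi} of Lemma \ref{l:mc_competitor} with $k=2$, and a final dimensional choice of $\rho_3$ together with $W_0(z)>0$. The only small slip is the parenthetical claim that $\alpha_2$ is bounded above (it need not be, since only an upper bound on $|S|$ is assumed, so $S$ may be very small and $\alpha_2$ arbitrarily large), but your primary justification, namely $\alpha_2/(\alpha_2-1)\le(1+\gamma_0)/\gamma_0$ combined with $\rho^{2(\alpha_2-1)}\le 1$, renders that remark unnecessary.
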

\begin{proof}
We first notice that, as $|S|\leq d\omega_d-\delta_0$, there is a constant $\gamma_0>0$ depending only on the dimension and $\delta_0$ such that 
$$\alpha_2(S)\ge 1+\gamma_0.$$
By Lemma \ref{l:mc_competitor} and Lemma \ref{l:harm_ext}, with $\rho_0:=\rho$, we have 
\begin{align*}
W_0(\psi\tilde h)&\le\left(1+\frac{C_0\rho^{2(\alpha_2-1)+d}}{\alpha_2-1}\right)W_0(\tilde h)\\
&\le\left(1+\frac{C_0\rho^{2(\alpha_2-1)+d}}{\alpha_2-1}\right)\left(1-\frac{\alpha_2-1}{d+\alpha_2-1}\right)W_0(z)\\
&\le\left(1+\frac{C_0\rho^{d}}{\gamma_0}\right)\left(1-\frac{\gamma_0}{d+\gamma_0}\right)W_0(z),
\end{align*}
which, after choosing $\ds \rho\leq \rho_3:= \left(\frac{\gamma_0^2}{2d C_0}\right)^{\sfrac1d}$ and observing that, since $\alpha_2>1$, then $W_0(z)>0$, concludes the proof.
\end{proof}

\subsubsection{The principal frequency on $S_{big}$}\label{ss:phi1}
In this subsection we consider the case when the boundary datum $c$ is of the form $c(\theta)=c_1\phi_1(\theta)$, that is only the first eigenfunction is involved. \emph{ From now on in this subsection we will suppose that the dimension is precisely $d=2$.}
Thus $S$ is an arc of circle and setting $\ds\delta:=|S|-\pi$ we obtain
$$|S|=\pi+\delta,\qquad \lambda_1=\left(\frac{\pi}{\pi+\delta}\right)^2 \qquad\text{and}\qquad\alpha_1=\frac{\pi}{\pi+\delta}.$$
We notice that the case $\delta=0$ is trivial. In fact in this case we have $\alpha_1=1$, $\alpha_2=2$ and choosing $\tilde h$ as in Lemma \ref{l:harm_ext} we have that for $\eps\le 1/3$
$$W_0(\tilde h)\le \left(1-\eps\right)W_0(z)\qquad\text{and}\qquad |\{|\tilde h|>0\}|-\frac\pi2=|\{|z|>0\}|-\frac\pi2=0,$$
which, by the definition of $W$, proves that 
\begin{equation}\label{e:delta=0}
\text{if }\ \delta=0\ \text{ and }\ \eps\le \frac13\ ,\ \text{ then }\ W(\tilde h)\le \left(1-\eps\right)W(z).
\end{equation}
The rest of the section is dedicated to the analogous estimate in the case 
\begin{equation}\label{e:delta}
\delta\in[-\delta_0,0[\, \cup\,]0,\pi-\delta_0]\,,\quad \mbox{where}\quad\delta_0=\delta_0(d)>0.
\end{equation} 

\noindent First we observe that, loosely speaking, $z$ is  a perturbation of size $\delta$ of the flat cone.

\begin{lemma}[Principal frequency on $S_{big}$ I]\label{l:frequenza_1}
Suppose that $\delta\in\R$ is as in \eqref{e:delta}, $S_{big}$ is the arc $\left]0,\pi+\delta\right[$, $s\in H^1(\partial B_1)$ is such that $\{|s|>0\}=\left]0,\pi\right[$ and $z\in H^1(B_1;\R^n)$ is the $1$-homogeneous extension of the function $\bar c\in H^1(\de B_1;\R^n)$, where
\begin{equation}\label{zrtheta}
\ds \bar c(\theta)=\begin{cases} 
 C\, s\left(\ds\frac{\theta\,\pi}{\pi+\delta}\right) &\text{if}\quad\ds \theta\in\left[0,\pi+\delta\right],\\
0&\text{otherwise}\,,
\end{cases}
\end{equation}
with $ C\in \R^n$. Then 
\begin{align}\label{e:firstfreqI}
W(z)-\frac\pi2
&=\frac12 \left(\|\bar c\|^2_{L^2(]0,\pi+\delta[; \R^n)}-\|\bar c\|^2_{L^2(]0,\pi+\delta[;\R^n)}+\delta\right) \notag\\
&=\frac{|C|^2}2\left(\|s'\|_{L^2(]0,\pi[)}^2-\|s\|_{L^2(]0,\pi[)}^2\right)+\frac{\delta}{2}\left(-\frac{|C|^2}{\pi+\delta}\|s'\|_{L^2(]0,\pi[)}^2-\frac{|C|^2}\pi \|s\|_{L^2(]0,\pi[)}^2+1\right).
\end{align}
\end{lemma}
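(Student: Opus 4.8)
The plan is a direct computation of $W(z)$. Since here $d=2$ and we evaluate at $r=1$, $x_0=0$, by definition
$$W(z)=\int_{B_1}|\nabla z|^2\,dx-\int_{\partial B_1}|z|^2\,d\HH^1+\big|\{|z|>0\}\cap B_1\big|.$$
First I would compute the three terms in polar coordinates, using the $1$-homogeneity $z(r,\theta)=r\,\bar c(\theta)$. Since $\partial_r z=\bar c(\theta)$ and $r^{-1}\partial_\theta z=\bar c'(\theta)$, the integrand $|\nabla z|^2=|\bar c(\theta)|^2+|\bar c'(\theta)|^2$ is independent of $r$, so $\int_{B_1}|\nabla z|^2=\tfrac12\big(\|\bar c\|^2_{L^2}+\|\bar c'\|^2_{L^2}\big)$, all norms being over the arc $]0,\pi+\delta[=\{|\bar c|>0\}$; likewise $\int_{\partial B_1}|z|^2=\|\bar c\|^2_{L^2(]0,\pi+\delta[)}$, and $\{|z|>0\}\cap B_1$ is the circular sector of opening $\pi+\delta$ and radius $1$, hence of measure $\tfrac{\pi+\delta}{2}$. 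Summing, $W(z)=\tfrac12\big(\|\bar c'\|^2_{L^2}-\|\bar c\|^2_{L^2}\big)+\tfrac{\pi+\delta}{2}$, which is the first identity of \eqref{e:firstfreqI} (the first norm on the right there being the $H^1$–seminorm $\|\bar c'\|_{L^2(]0,\pi+\delta[)}$).

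Next I would rescale. The substitution $t=\theta\,\pi/(\pi+\delta)$ maps $]0,\pi+\delta[$ onto $]0,\pi[$ with $d\theta=\tfrac{\pi+\delta}{\pi}\,dt$, and from $\bar c(\theta)=C\,s(t)$ the chain rule gives $\bar c'(\theta)=\tfrac{\pi}{\pi+\delta}\,C\,s'(t)$. Therefore
$$\|\bar c\|^2_{L^2(]0,\pi+\delta[)}=\tfrac{\pi+\delta}{\pi}\,|C|^2\,\|s\|^2_{L^2(]0,\pi[)},\qquad \|\bar c'\|^2_{L^2(]0,\pi+\delta[)}=\tfrac{\pi}{\pi+\delta}\,|C|^2\,\|s'\|^2_{L^2(]0,\pi[)}.$$
Substituting these into $W(z)-\tfrac\pi2=\tfrac12\big(\|\bar c'\|^2-\|\bar c\|^2+\delta\big)$ and writing $\tfrac{\pi}{\pi+\delta}=1-\tfrac{\delta}{\pi+\delta}$, $\tfrac{\pi+\delta}{\pi}=1+\tfrac{\delta}{\pi}$ separates off the $\delta$–order term and gives exactly the second identity in \eqref{e:firstfreqI}.

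There is essentially no obstacle here: the argument is bookkeeping. The only point requiring a little care is the interplay of the two factors produced by the rescaling — the Jacobian $\tfrac{\pi+\delta}{\pi}$ of $d\theta$ and the square $\tfrac{\pi^2}{(\pi+\delta)^2}$ of the chain-rule factor — so that $\|\bar c'\|^2$ ends up carrying the weight $\tfrac{\pi}{\pi+\delta}$ while $\|\bar c\|^2$ carries $\tfrac{\pi+\delta}{\pi}$; this is precisely what makes the two ``error'' terms in the last line of \eqref{e:firstfreqI} appear with denominators $\pi+\delta$ and $\pi$ respectively. For $\delta=0$ the formula collapses to $W(z)-\tfrac\pi2=\tfrac{|C|^2}{2}\big(\|s'\|^2-\|s\|^2\big)$, consistently with the discussion of the flat cone preceding the lemma.
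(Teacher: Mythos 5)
Your computation is correct and is essentially the paper's own proof: both compute $\int_{B_1}|\nabla z|^2=\tfrac12(\|\bar c\|_2^2+\|\bar c'\|_2^2)$ from the $1$-homogeneity, use $|\{|z|>0\}\cap B_1|-\tfrac\pi2=\tfrac\delta2$, and then perform the same change of variables $\phi=\theta\pi/(\pi+\delta)$ to get the weights $\tfrac{\pi}{\pi+\delta}$ on $\|s'\|_2^2$ and $\tfrac{\pi+\delta}{\pi}$ on $\|s\|_2^2$. Your reading of the first identity in \eqref{e:firstfreqI} (one of the two norms should be $\|\bar c'\|_{L^2}^2$) matches what the paper's proof actually establishes.
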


\begin{proof} We shall denote the various $L^2$ norms simply by $\|\cdot\|_2$, the domain beeing the same as the domain of definition of the function inside. Notice that, by the $1$-homogeneity of $z$ we immediately have
\begin{align*}
W_0(z) &=\int_{B_1}|\nabla z|^2\,dx-\|\bar c\|^2_2
=\int_0^1r\,dr\int_{0}^{\pi+\delta}\left(|\partial_r z|^2+\frac{|\partial_\theta z|^2}{r^2}\right)\,d\theta-\|\bar c\|^2_2\\
&=\int_0^1r\,dr\int_{0}^{\pi+\delta}\left(|\bar c|^2+|\bar c'|^2\right)\,d\theta-\|\bar c\|^2_2\\
&=\frac12 \left(\|\bar c'\|^2_2-\|\bar c\|^2_2\right)
\end{align*}
so that, since by definition of $\delta$, $\ds \frac\delta2=|\{|z|>0\}|-\frac\pi2$, the first equality in \eqref{e:firstfreqI} follows.
Next we set $\phi=\ds\frac{\theta\pi}{\pi+\delta}$, we notice that $d\phi\,dr=\ds\frac{\pi}{\pi+\delta}d\theta\,dr$ and we compute
\begin{equation}\label{e:sviluppo_c'}
\|\bar c'\|_2^2=|\tilde c|^2\,\int_{0}^{\pi+\delta}\left(\frac\pi{\pi+\delta}\right)^2|s'|^2\left(\ds\frac{\theta\pi}{\pi+\delta}\right)\,d\theta
=|\tilde c|^2\int_{0}^{\pi}\frac\pi{\pi+\delta}|s'|^2(\phi)\,d\phi
=\frac\pi{\pi+\delta}\,|\tilde c|^2\,\|s'\|_2^2,
\end{equation}
and analogously
\begin{equation}\label{e:sviluppo_c}
\|\bar c\|^2_2=|C|^2\int_{0}^{\pi+\delta}s^2\left(\ds\frac{\theta\pi}{\pi+\delta}\right)\,d\theta=|C|^2\frac{\pi+\delta}\pi\int_{0}^{\pi}s^2(\phi)\,d\phi=\frac{\pi+\delta}{\pi}\,|C|^2\,\|s\|_2^2,
\end{equation}
which immediately gives 
\begin{align*}
W(z)-\frac\pi2 &=\frac{|C|^2}2\left(\frac\pi{\pi+\delta}\|s'\|_2^2-\frac{\pi+\delta}\pi \|s\|_2^2\right)+\frac\delta2\\
&=\frac{|C|^2}2\left(\|s'\|_2^2-\|s\|_2^2\right)+\frac{\delta}{2}\left(-\frac{|C|^2}{\pi+\delta}\|s'\|_2^2-\frac{|C|^2}\pi \|s\|_2^2+1\right).
\end{align*}
\end{proof}

\noindent Next we consider a perturbation $z_\eps$ of the function $z$, by an internal variation of size $\eps$ and we compare the energy $W(z_\eps)$ with the one of $W(z)$.

\begin{lemma}[Principal frequency on $S_{big}$ II]\label{l:frequenza_2}
Suppose that $z\in H^1(B_1;\R^n)$ is the one homogeneous extension of a function $\bar c\in H^1(S;\R^n)$, $S:=]0,\pi+\delta[$, and consider the function $z_\eps\in H^1(B_1;\R^n)$ defined by
$$\ds z_\eps(r,\theta)=\begin{cases}
r\, \bar c\left(\ds\frac{(\pi+\delta)\theta}{\pi+\delta+\eps\xi}\right)&\text{if}\quad \ds \theta\in\left[0,\pi+\delta+\eps \xi(r)\right],\\
0 &\text{otherwise}\,,
\end{cases}$$
where $\xi:[0,1]\to\R^+$ is a smooth function compactly supported on $]0,1[$.
If $\delta+\eps \xi \geq - 2\delta_0$, then 
\begin{align}\label{e:WzepsVSWz}
W(z_\eps)
&\le W(z)+\eps\int_0^1r\xi(r)\,dr\left(1-\frac{\|\bar c\|_{L^2(]0,\pi+\delta[;\R^n)}^2}{\pi+\delta}-\frac{\|\bar c'\|_{L^2(]0,\pi+\delta[;\R^n)}^2}{\pi+\delta}\right)
\notag\\
&\qquad+\eps^2\,\left(\frac{1+(\pi+\delta)^2}{\pi+\delta}\right)\|\bar c'\|_{L^2(]0,\pi+\delta[;\R^n)}^2 \left(\int_0^1 \frac{r\xi(r)^2+r^3|\xi'(r)|^2}{(\pi+\eps \xi)}\,dr\right).
\end{align}
\end{lemma}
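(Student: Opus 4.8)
The statement is purely a computation: we have an explicit one‑homogeneous competitor $z(r,\theta)=r\,\bar c(\theta)$ on the cone $]0,\pi+\delta[$ and an explicit internal variation $z_\eps$ obtained by rescaling the angular variable, and we must expand $W(z_\eps)$ to second order in $\eps$. The plan is to pass to polar coordinates, use the $1$‑homogeneity structure to reduce the Dirichlet integral to angular $L^2$ norms of $\bar c$ and $\bar c'$, substitute $\phi=(\pi+\delta)\theta/(\pi+\delta+\eps\xi(r))$ to normalize the angular domain, and then Taylor‑expand the resulting $r$‑integrand in $\eps$.

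\medskip

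\noindent First I would write, for each fixed $r\in]0,1[$, the change of variables $\phi=\dfrac{(\pi+\delta)\theta}{\pi+\delta+\eps\xi(r)}$, under which the angular slice $]0,\pi+\delta+\eps\xi(r)[$ maps to $]0,\pi+\delta[$ and $d\theta=\dfrac{\pi+\delta+\eps\xi(r)}{\pi+\delta}\,d\phi$. In these coordinates $z_\eps(r,\theta)=r\,\bar c(\phi)$, so
$$\partial_r z_\eps=\bar c(\phi)-r\,\bar c'(\phi)\,\frac{\eps\,\xi'(r)\,\theta}{\pi+\delta+\eps\xi(r)}=\bar c(\phi)-\bar c'(\phi)\,\frac{\eps\,r\,\xi'(r)\,\phi}{\pi+\delta},$$
and $\partial_\theta z_\eps=r\,\bar c'(\phi)\,\dfrac{\pi+\delta}{\pi+\delta+\eps\xi(r)}$. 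Plugging into
$$\int_{B_1}|\nabla z_\eps|^2=\int_0^1 r\,dr\int_0^{\pi+\delta+\eps\xi(r)}\Big(|\partial_r z_\eps|^2+\tfrac1{r^2}|\partial_\theta z_\eps|^2\Big)\,d\theta$$
and carrying out the $\phi$‑substitution, the $\tfrac1{r^2}|\partial_\theta z_\eps|^2$ term contributes $\dfrac{\pi+\delta}{\pi+\delta+\eps\xi(r)}\|\bar c'\|^2$ times $r$, while the $|\partial_r z_\eps|^2$ term contributes, after expanding the square, $\dfrac{\pi+\delta+\eps\xi(r)}{\pi+\delta}\big(\|\bar c\|^2 + (\text{cross term})+(\text{order }\eps^2)\big)$ times $r$. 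The cross term is $-\dfrac{2\eps r\xi'(r)}{\pi+\delta}\int_0^{\pi+\delta}\phi\,\bar c(\phi)\cdot\bar c'(\phi)\,d\phi=-\dfrac{\eps r\xi'(r)}{\pi+\delta}\big(\bar c(\pi+\delta)^2(\pi+\delta)-\|\bar c\|^2\big)$ after integration by parts; since $\bar c$ has compact support in the open arc its boundary term vanishes, giving simply $+\dfrac{\eps r\xi'(r)}{\pi+\delta}\|\bar c\|^2$. Also the measure term is $|\{|z_\eps|>0\}\cap B_1|=\tfrac12(\pi+\delta)+\tfrac\eps2\int_0^1 r\xi(r)\,dr$. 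Collecting the $O(1)$ part reproduces $W(z)$ (via Lemma~\ref{l:frequenza_1}), the $O(\eps)$ part yields $\eps\int_0^1 r\xi(r)\,dr\big(1-\tfrac{\|\bar c\|^2}{\pi+\delta}-\tfrac{\|\bar c'\|^2}{\pi+\delta}\big)$ (the $\|\bar c\|^2$ from the cross term combines with the Jacobian expansion of the $\|\bar c\|^2$ piece, and the $\|\bar c'\|^2$ from the Jacobian expansion of the angular piece; the measure term contributes the $1$), and the $O(\eps^2)$ part collects the $r^3|\xi'|^2$ contribution from the square of the $r$‑derivative correction together with the $r\xi^2$ contributions from the second‑order expansions of the two Jacobian factors $\dfrac{\pi+\delta+\eps\xi}{\pi+\delta}$ and $\dfrac{\pi+\delta}{\pi+\delta+\eps\xi}$.

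\medskip

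\noindent The main technical point — and the only place the hypothesis $\delta+\eps\xi\ge-2\delta_0$ is used — is the control of the $\eps^2$ remainder: the factor $\dfrac{\pi+\delta}{\pi+\delta+\eps\xi(r)}$ must stay bounded, which it does precisely because $\pi+\delta+\eps\xi(r)\ge\pi-2\delta_0>0$ for $\delta_0$ a small dimensional constant, so one may bound all the $\eps^2$ terms by a constant multiple of $\|\bar c'\|^2\int_0^1\dfrac{r\xi^2+r^3|\xi'|^2}{\pi+\eps\xi}\,dr$ (keeping this denominator rather than a crude constant is what the later application needs), and absorb the lower‑order angular norm $\|\bar c\|^2\le\|\bar c'\|^2$‑type comparisons into the displayed coefficient $\dfrac{1+(\pi+\delta)^2}{\pi+\delta}$. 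The expansion in $\eps$ is exact up to this explicitly‑bounded quadratic tail (there is no $\eps^3$ error because the Jacobian factors are rational of degree one in $\eps$ and $\partial_r z_\eps$ is affine in $\eps$), so rather than a Taylor inequality one gets a genuine identity whose $\eps^2$ coefficient is then estimated from above; hence the inequality sign in \eqref{e:WzepsVSWz}. Assembling these three pieces and invoking Lemma~\ref{l:frequenza_1} for the $O(1)$ term gives the claim.
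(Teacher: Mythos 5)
Your proposal follows essentially the same route as the paper's proof: the same change of variables $\phi=\frac{(\pi+\delta)\theta}{\pi+\delta+\eps\xi(r)}$, the same splitting of $|\nabla z_\eps|^2$ into a main part, a cross term handled by integration by parts in $\phi$ (using $\bar c(0)=\bar c(\pi+\delta)=0$) and then in $r$, the same exact measure computation, and the same bounding of the quadratic remainder using that $\pi+\delta+\eps\xi\geq\pi-2\delta_0>0$. Only minor slips, which do not affect the structure or the stated first-order coefficient: the correct radial derivative is $\partial_r z_\eps=\bar c(\phi)-\frac{\eps r\xi'(r)\phi}{\pi+\delta+\eps\xi(r)}\bar c'(\phi)$ (denominator $\pi+\delta+\eps\xi$, not $\pi+\delta$), the measure term is $\frac{\pi+\delta}{2}+\eps\int_0^1 r\xi(r)\,dr$ (no extra factor $\frac12$, consistently with the ``$1$'' you correctly keep in the $O(\eps)$ coefficient), and the Jacobian factor $\frac{\pi+\delta+\eps\xi}{\pi+\delta}$ is affine in $\eps$, so the only genuine $\eps^2$ contributions come from $\frac{\pi+\delta}{\pi+\delta+\eps\xi}$ and from the square of the radial-derivative correction.
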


\begin{proof}
With an abuse of notation we denote by $z_\eps, \bar c$ the components of the corresponding functions. Moreover we set $\phi(r,\theta):=\ds\frac{(\pi+\delta)\theta}{\pi+\delta+\eps\xi}$ and we compute in polar coordinates
\begin{align*}
|\nabla z_\eps|^2&
=|\partial_r z_\eps|^2+\frac{1}{r^2}|\partial_\theta z_\eps|^2\\
&=\left|\bar c\left(\phi(r,\theta)\right)-\frac{r(\pi+\delta)\,\theta\, \eps \,\xi'(r)}{(\pi+\delta+\eps \xi(r))^2}\bar c'\left(\phi(r,\theta)\right)\right|^2+\left(\ds\frac{\pi+\delta}{\pi+\delta+\eps \xi(r)}\right)^2|\bar c'|^2\left(\phi(r,\theta)\right)\\
&=\underbrace{\bar c^2\left(\phi(r,\theta)\right)+\left(\ds\frac{\pi+\delta}{\pi+\delta+\eps \xi(r)}\right)^2|\bar c'|^2\left(\phi(r,\theta)\right)}_{=:I_1(r,\theta)}
-\underbrace{2\,\eps\frac{r \phi(r,\theta) \xi'(r)}{(\pi+\delta+\eps \xi(r))}\bar c\left(\phi(r,\theta)\right)\bar c'\left(\phi(r,\theta)\right)}_{:=I_2(r,\theta)}\\
&+\underbrace{\eps^2\frac{r^2\phi(r,\theta)^2|\xi'(r)|^2}{(\pi+\delta+\eps\xi(r))^2}|\bar c'|^2\left(\phi(r,\theta)\right)}_{:=I_3(r,\theta)}.
\end{align*}
We notice that 
\begin{align*}
\int_0^1r\int_{0}^{\pi+\delta+\eps\xi(r)} I_1(r,\theta)\,d\theta\,dr 
&=\int_0^1r\,dr\int_{0}^{\pi+\delta+\eps\xi(r)}\left(\bar c^2\left(\phi(r,\theta)\right)+\left(\ds\frac{\pi+\delta}{\pi+\delta+\eps \xi(r)}\right)^2|\bar c'|^2\left(\phi(r,\theta)\right)\right)\,d\theta\\
&=\int_0^1r\,dr\int_0^{\pi+\delta}\left(\frac{\pi+\delta+\eps \xi(r)}{\pi+\delta}\bar c^2\left(\phi\right)+\frac{\pi+\delta}{\pi+\delta+\eps \xi(r)}|\bar c'|^2\left(\phi\right)\right)\,d\phi\\
&=\frac12\left(\|\bar c\|^2_2+\|\bar c'\|^2_2\right)+\eps\frac{\|\bar c\|_2^2}{\pi+\delta}\int_0^1r\xi(r)\,dr -\eps\|\bar c'\|_2^2\int_0^1\frac{r \xi(r)}{\pi+\delta+\eps \xi(r)}\,dr\\
&=\frac12\left(\|\bar c\|^2_2+\|\bar c'\|^2_2\right)+\eps\int_0^1r\xi(r)\,dr\left(\frac{\|\bar c\|_2^2}{\pi+\delta}-\frac{\|\bar c'\|_2^2}{\pi+\delta}\right)\\
&+\eps^2\|\bar c'\|_2^2\int_0^1\frac{r \xi^2(r)}{(\pi+\delta)(\pi+\delta+\eps \xi(r))}\,dr
\end{align*}
where we used $d\phi\,dr =\ds\frac{\pi+\delta}{\pi+\delta+\eps\xi}d\theta\,dr$.
For the second integrand we have
\begin{align*}
\int_0^1r\int_{0}^{\pi+\delta+\eps\,\xi(r)} I_2(r,\theta)\,d\theta\,dr
&=\int_0^1r\,dr\int_{0}^{\pi+\delta+\eps \xi(r)}2\eps \frac{r\xi'(r)}{\pi+\delta+\eps \xi(r)}\phi(r,\theta)\,\bar c\left(\phi(r,\theta)\right)\bar c'\left(\phi(r,\theta)\right)\,d\theta\\
&=\eps\int_0^1r\,dr\int_0^{\pi+\delta}\frac{r \xi'(r)}{(\pi+\delta+\eps\xi(r))}\phi\,(\bar c^2\left(\phi\right))'\frac{\pi+\delta+\eps\xi(r)}{\pi+\delta}\,d\phi\\
&=-\eps\int_0^1r\,dr\int_{S}\frac{r \xi'(r)}{\pi+\delta}\bar c^2\left(\phi\right)\,d\phi=-\frac2{\pi+\delta}\,\eps\,\|\bar c\|_2^2\int_0^1\frac{r^2}2 \xi'(r)\,dr\\
&=2\eps \int_0^1r \xi(r)\,dr\left(\frac{\|\bar c\|_2^2}{\pi+\delta}\right)\,,
\end{align*}
where we used integration by parts in $\phi$ from the second to the third line, together with $\bar c(0)=0=\bar c(\pi+\delta)$, and integration by parts in $r$ in the last equality. Finally for the third integral, we compute
\begin{align*}
\int_0^1r\int_{0}^{\pi+\delta+\eps\xi(r)}&I_3(r,\theta)\,d\theta\,dr
=\eps^2\int_0^1r\,dr\int_{0}^{\pi+\delta+\eps\xi(r)}\frac{r^2|\xi'(r)|^2}{(\pi+\delta+\eps\xi(r))^2}\phi^2|\bar c'|^2\left(\phi(r,\theta)\right)\,d\theta\\
&=\eps^2\int_0^1r\,dr\int_{0}^{\pi+\delta}\frac{r^2|\xi'(r)|^2}{(\pi+\delta+\eps\xi(r))^2}\,\phi^2|\bar c'|^2\left(\phi\right)\frac{\pi+\delta+\eps \xi(r)}{\pi+\delta}\,d\phi\\
&\le\eps^2\left(\pi+\delta\right)^2\|\bar c'\|_2^2\int_0^1\frac{r^3|\xi'(r)|^2}{(\pi+\delta)(\pi+\delta+\eps \xi(r))}\,dr\\
&\le \eps^2\,\|\bar c'\|_2^2\,(\pi+\delta)\int_0^1 \frac{r^3|\xi'(r)|^2}{(\pi+\delta+\eps \xi)}\,dr\,.
\end{align*}
Combining the previous computation, and summing over the components, we conclude for the vectorial functions that
\begin{align*}
W_0(z_\eps)
&\leq \frac12\left(\|\bar c'\|^2_{L^2(]0,\pi+\delta[;\R^n)}-\|\bar c\|^2_{L^2(]0,\pi+\delta[;\R^n)}\right)-\eps\int_0^1r\xi(r)\,dr\left(\frac{\|\bar c\|_2^2}{\pi+\delta}+\frac{\|\bar c'\|_{L^2(]0,\pi+\delta[;\R^n)}^2}{\pi+\delta}\right) \\
&\qquad+\eps^2\,\left(\frac{1+(\pi+\delta)^2}{\pi+\delta}\right)\|\bar c'\|_{L^2(]0,\pi+\delta[;\R^n)}^2 \left(\int_0^1 \frac{r\xi(r)^2+r^3|\xi'(r)|^2}{(\pi+\eps \xi)}\,dr\right)\,.
\end{align*}
Next notice that $z_\eps|_{\de B_1}=\bar c$ and
$$|\{|z_\eps|>0\}|=\int_0^1r(\pi+\delta+\eps \xi(r))\,dr=|\{|z|>0\}|+\eps\int_0^1r\xi(r)\,dr\,,$$
so that, using the first equality of \eqref{e:firstfreqI}, we conclude
\begin{align*}
W(z_\eps)
&\leq W(z)+\eps\int_0^1r\xi(r)\,dr\left(1-\frac{\|\bar c\|_{L^2(]0,\pi+\delta[;\R^n)}^2}{\pi+\delta}-\frac{\|\bar c'\|_{L^2(]0,\pi+\delta[;\R^n)}^2}{\pi+\delta}\right) \\
&\qquad+\eps^2\,\left(\frac{1+(\pi+\delta)^2}{\pi+\delta}\right)\|\bar c'\|_{L^2(]0,\pi+\delta[;\R^n)}^2 \left(\int_0^1 \frac{r\xi(r)^2+r^3|\xi'(r)|^2}{(\pi+\eps \xi)}\,dr\right)
\end{align*}
which is \eqref{e:WzepsVSWz}.
\end{proof}

In the next lemma we combine the estimates of Lemma \ref{l:frequenza_1} and Lemma \ref{l:frequenza_2} to prove the epiperimetric inequality in the case when the trace $\bar c$ is precisely the principal frequency function of the arc $S_{big}$.
 
\begin{lemma}[Principal frequency on $S_{big}$ III]\label{l:frequenza_3}
Suppose that $\delta\in\R$ is as in \eqref{e:delta}, $S_{big}$ is the arc $\left]0,\pi+\delta\right[$, $s:[0,\pi]\to \R$ is such that $\|s\|_2^2=\|s'\|_2^2$ and $\xi:[0,1]\to\R^+$ is a compactly supported function on $[0,1]$. We notice that if $z_\eps$ and $s$ are as in Lemma \ref{l:frequenza_1} and \ref{l:frequenza_2} respectively, than
$$\ds z_\eps(r,\theta)=\begin{cases}
r\,C\,s\left(\ds\frac{\theta\pi}{\pi+\delta+\eps\xi(r)}\right)&\text{if}\quad \theta\in\left[0,\pi+\delta+\eps\xi(r)\right],\\
0 &\text{otherwise}.
\end{cases}$$
If $\eps|\xi|\leq \delta_0$, then
\begin{align*}
W(z_\eps)&\le W(z)+\frac{2\eps}\delta \left(W(z)-\frac\pi2\right)\int_0^1r\xi(r)\,dr\\
&\qquad+\eps\frac{|C|^2\,\|s'\|_2^2}{(\pi+\delta)^2}\left(\delta\int_0^1r\xi(r)\,dr+\eps\frac{\pi\left(1+(\pi+\delta)^2\right)}{(\pi-\delta_0)}\int_0^1\left(r \xi^2+ r^3|\xi'(r)|^2\right)\,dr\right).
\end{align*}
\end{lemma}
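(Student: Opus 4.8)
The plan is to obtain the estimate purely by combining Lemmas \ref{l:frequenza_1} and \ref{l:frequenza_2} and doing elementary algebra; there is no real analytic content beyond those two lemmas. First I would check that the hypotheses of Lemma \ref{l:frequenza_2} are met and that its $z_\eps$ is exactly the one displayed here: taking the trace $\bar c(\theta)=C\,s\bigl(\tfrac{\theta\pi}{\pi+\delta}\bigr)$ of Lemma \ref{l:frequenza_1}, the function from Lemma \ref{l:frequenza_2} is $z_\eps(r,\theta)=r\,\bar c\bigl(\tfrac{(\pi+\delta)\theta}{\pi+\delta+\eps\xi(r)}\bigr)=r\,C\,s\bigl(\tfrac{\pi\theta}{\pi+\delta+\eps\xi(r)}\bigr)$ on $\{\theta\in[0,\pi+\delta+\eps\xi(r)]\}$ and $0$ elsewhere; moreover, since $\delta\ge-\delta_0$ by \eqref{e:delta} and $\eps|\xi|\le\delta_0$, one has $\delta+\eps\xi\ge-2\delta_0$, so \eqref{e:WzepsVSWz} applies.

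Second, I would substitute into \eqref{e:WzepsVSWz} the explicit $L^2$-identities \eqref{e:sviluppo_c'} and \eqref{e:sviluppo_c}, i.e. $\|\bar c'\|_2^2=\tfrac{\pi}{\pi+\delta}|C|^2\|s'\|_2^2$ and $\|\bar c\|_2^2=\tfrac{\pi+\delta}{\pi}|C|^2\|s\|_2^2$, and use the hypothesis $\|s\|_2^2=\|s'\|_2^2$. The only point that is not completely mechanical is to rewrite the coefficient of $\eps\int_0^1 r\xi\,dr$ in \eqref{e:WzepsVSWz}: a short computation shows $1-\tfrac{\|\bar c\|_2^2}{\pi+\delta}-\tfrac{\|\bar c'\|_2^2}{\pi+\delta}=1-\tfrac{|C|^2\|s\|_2^2}{\pi}-\tfrac{\pi|C|^2\|s\|_2^2}{(\pi+\delta)^2}$, whereas by Lemma \ref{l:frequenza_1} (with $\|s\|_2^2=\|s'\|_2^2$) one has $\tfrac2\delta\bigl(W(z)-\tfrac\pi2\bigr)=1-\tfrac{|C|^2\|s\|_2^2}{\pi+\delta}-\tfrac{|C|^2\|s\|_2^2}{\pi}$; the two differ by exactly $\tfrac{\delta|C|^2\|s'\|_2^2}{(\pi+\delta)^2}$, using $\tfrac1{\pi+\delta}-\tfrac{\pi}{(\pi+\delta)^2}=\tfrac{\delta}{(\pi+\delta)^2}$. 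Inserting this identity produces the term $\tfrac{2\eps}\delta\bigl(W(z)-\tfrac\pi2\bigr)\int_0^1 r\xi\,dr$ together with $\eps\tfrac{\delta|C|^2\|s'\|_2^2}{(\pi+\delta)^2}\int_0^1 r\xi\,dr$, matching the first and (part of the) last group of terms in the claim.

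Third, I would treat the quadratic term of \eqref{e:WzepsVSWz}: plugging $\|\bar c'\|_2^2=\tfrac{\pi}{\pi+\delta}|C|^2\|s'\|_2^2$ into $\eps^2\bigl(\tfrac{1+(\pi+\delta)^2}{\pi+\delta}\bigr)\|\bar c'\|_2^2\int_0^1\tfrac{r\xi^2+r^3|\xi'|^2}{\pi+\eps\xi}\,dr$ gives $\eps^2\,\tfrac{\pi(1+(\pi+\delta)^2)}{(\pi+\delta)^2}|C|^2\|s'\|_2^2\int_0^1\tfrac{r\xi^2+r^3|\xi'|^2}{\pi+\eps\xi}\,dr$, and since $\xi\ge0$ and $\eps|\xi|\le\delta_0$ imply $\pi+\eps\xi\ge\pi-\delta_0$, the integral is bounded by $\tfrac1{\pi-\delta_0}\int_0^1(r\xi^2+r^3|\xi'|^2)\,dr$. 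Collecting the three contributions — the $W(z)$ term, the $\eps$-linear terms from Step 2, and this quadratic bound — yields precisely the stated inequality. Thus the ``hard part'' is merely the careful rearrangement of \eqref{e:WzepsVSWz} via \eqref{e:sviluppo_c'}, \eqref{e:sviluppo_c} and \eqref{e:firstfreqI} under the constraint $\|s\|_2=\|s'\|_2$; no genuine obstacle arises.
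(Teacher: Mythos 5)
Your proposal is correct and follows essentially the same route as the paper: verify that Lemma \ref{l:frequenza_2} applies (using $\delta\ge-\delta_0$ and $|\eps\xi|\le\delta_0$ to get $\delta+\eps\xi\ge-2\delta_0$), substitute \eqref{e:sviluppo_c} and \eqref{e:sviluppo_c'} into \eqref{e:WzepsVSWz}, rewrite the linear coefficient as $\tfrac2\delta\bigl(W(z)-\tfrac\pi2\bigr)$ plus the correction $\tfrac{\delta|C|^2\|s'\|_2^2}{(\pi+\delta)^2}$ via $\tfrac1{\pi+\delta}-\tfrac{\pi}{(\pi+\delta)^2}=\tfrac{\delta}{(\pi+\delta)^2}$, and bound the quadratic denominator by $\pi-\delta_0$. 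This is exactly the paper's argument, with the algebra spelled out slightly more explicitly.
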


\begin{proof} By Lemma \ref{l:frequenza_1}, combined with the condition $\|s\|^2_2=\|s'\|^2_2$, we have that
$$W(z)-\frac\pi2=\frac{\delta}{2}\left(1-\frac{|C|^2}{\pi+\delta}\|s'\|_2^2-\frac{|C|^2}\pi \|s\|_2^2\right)\,.$$
Using this together with \eqref{e:WzepsVSWz}, \eqref{e:sviluppo_c} and \eqref{e:sviluppo_c'}, we obtain
\begin{equation*}
\begin{array}{rcl}
W(z_\eps)- W(z)&
\ds\stackrel{\eqref{e:WzepsVSWz}}{\leq} &\ds\eps\int_0^1r\xi(r)\,dr\left(1-\frac{\|\bar c\|_2^2}{\pi+\delta}-\frac{\|\bar c'\|_2^2}{\pi+\delta}\right)\\
&&\ds\quad+\eps^2\,\left(\frac{1+(\pi+\delta)^2}{\pi+\delta}\right)\|\bar c'\|_2^2 \left(\int_0^1 \frac{r\xi(r)^2+r^3|\xi'(r)|^2}{(\pi+\eps \xi)}\,dr\right)\\
&\ds\stackrel{\eqref{e:sviluppo_c}\&\eqref{e:sviluppo_c'}}{=}&\ds\eps\int_0^1r\xi(r)\,dr\underbrace{\left(1-\frac{|C|^2\,\|s\|_2^2}{\pi}-\frac{|C|^2\,\|s'\|_2^2}{\pi+\delta}\right)}_{=\frac{2}\delta (W(z)-\frac\pi2)}+\eps\,\delta \frac{|C|^2\,\|s'\|^2_2}{(\pi+\delta)^2} \int_0^1r\xi(r)\,dr\\
&&\ds\quad+\eps^2\,\pi\,\left(\frac{1+(\pi+\delta)^2}{(\pi+\delta)^2}\right)|C|^2\,\|s'\|_2^2 \left(\int_0^1 \frac{r\xi(r)^2+r^3|\xi'(r)|^2}{(\pi+\eps \xi)}\,dr\right)\\
&=&\ds\frac{2\eps}\delta \left(W(z)-\frac\pi2\right)\int_0^1r\xi(r)\,dr+\eps\,|C|^2\,\|s'\|_2^2\,\cdot\\
&&\ds\quad\cdot\,\left(\frac{\delta}{(\pi+\delta)^2}\int_0^1r\xi(r)\,dr+\eps\,\pi\, \frac{1+(\pi+\delta)^2}{(\pi+\delta)^2} \int_0^1 \frac{r\xi(r)^2+r^3|\xi'(r)|^2}{(\pi+\eps \xi)}\,dr\right)
\end{array}
\end{equation*}
which, using the bound on $\eps \xi$, gives the claim.
\end{proof}

\subsubsection{Proof of Proposition \ref{p:S_big}} Let $\{\phi_j\}_{j\ge 1}\subset H^1_0(S)$ be the family of eigenfunctions on the arc $S$. Using the same notations of Subsection \ref{ss:tildeh}, we decompose the function $c$ as 
$$c=c_1 \phi_1+g, \quad \mbox{where}\quad g:=\sum_{j=2}^\infty c_j \phi_j\,\mbox{ and }\,c_j\in \R^n \mbox{ for every }j\geq1.$$
Let $z_1,z_g\in H^1(B_1;\R^n)$ be the one-homogeneous extensions in $B_1$ respectively of $c_1\phi_1$ and $g$, and let $\tilde h$ be the harmonic extension of $g$
on the cone generated by $S$, so that 
$$z_1(r,\theta)=r\,c_1\phi_1(\theta)\,,\quad z_g(r,\theta)=r g(\theta)\quad\text{and}\quad \tilde h(r,\theta)=\sum_{j=2}^{+\infty}r^{\alpha_j}\,c_j\phi_j(\theta).$$
Furthermore we choose 
\begin{equation}\label{e:choice_1}
\rho_2\leq \min\left\{\frac{\rho_3}2,\eps_3^{\sfrac13},\tilde C\right\},
\end{equation}
where $\rho_3, \eps_3>0$ are the universal constants of Lemma \ref{l:harm_S_big} and $\tilde C$ will be chosen in \eqref{tildeCchoice}. Let $\rho\leq \rho_2$ and $\psi^{2\rho}$ be the truncation function from Lemma \ref{l:harm_S_big}. Then the truncated function $h^\rho_g:=\psi^{2\rho} \tilde h$ satisfies  
\begin{equation}\label{e:Sbig_1}
W_0(h^\rho_g)\leq (1-\eps_3) W_0(z_g)
\qquad \mbox{and} \qquad
\spt(h^\rho_g)\subset B_1\setminus B_{\rho}\,.
\end{equation}
Moreover, since $\tilde h(\theta)$ and $\phi_1(\theta)$ are orthogonal in $H^1(\de B_1)$ and $\psi^{2\rho}$ is a radial function, $h_g^\rho$ is orthogonal to $\phi_1$ in $H^1(B_1)$.

Up to a change of coordinates we can suppose that $S$ is the arc $\left[0,\pi+\delta\right]$. Next we will apply Lemma \ref{l:frequenza_3} to $\bar c=c_1\phi_1$, $z=z_1$ and $\ds s=\sqrt{\frac{\pi+\delta}{\pi}} \phi_{\pi}$, where $\phi_\pi$ is the first eigenfunction of the semicircle $\ds\phi_\pi(\theta)=\sqrt{\frac2\pi}\sin\theta$, so that indeed $\|s\|_2^2=\|s'\|_2^2$. Let $\xi:[0,1]\to\R^+$ be a smooth positive function with support in $]0,1[$ and such that $\ds\int_0^1r\xi(r)\,dr=\frac12$. Applying Lemma \ref{l:frequenza_3} with the function $\xi_\rho=\rho \xi(r/\rho)$ we obtain
\begin{align*}
\left(W(z_\eps)-\frac\pi2\right)& \le \left(1 +\frac{2\eps}\delta \int_0^1r\xi_\rho(r)\,dr\right) \,\left(W(z_1)-\frac\pi2\right) \\
&\qquad+\eps\frac{\|s'\|_2^2}{(\pi+\delta)^2}\left(\delta\int_0^1r\xi_\rho(r)\,dr+\eps\frac{\pi\left(1+(\pi+\delta)^2\right)}{(\pi-\delta_0)}\int_0^1\left(r \xi_\rho^2+ r^3|\xi_\rho'(r)|^2\right)\,dr\right).
\end{align*}
Choosing $\eps=-\delta$, and recalling that $|\delta|\leq \delta_0$, the previous estimate yields
\begin{align}\label{e:Sbig_2}
W(z_\eps)-\frac\pi2 
&\le \left(1 -2 \int_0^1r\xi_\rho(r)\,dr\right) \,\left(W(z_1)-\frac\pi2\right) \notag\\
&-\delta^2\frac{\|s'\|_2^2}{(\pi+\delta)^2}\left(\int_0^1r\xi_\rho(r)\,dr-\frac{\pi\left(1+(\pi+\delta)^2\right)}{(\pi-\delta_0)}\int_0^1\left(r \xi_\rho^2+ r^3|\xi_\rho'(r)|^2\right)\,dr\right). \notag\\
&= \left(1 -\rho^3\right) \,\left(W(z_1)-\frac\pi2\right)\\
&-\frac{\delta^2\|s'\|_2^2\,\rho^3}{(\pi+\delta_0)^2}\left(\frac12-\rho\frac{\pi\left(1+(\pi+\delta_0)^2\right)}{(\pi-\delta_0)}\int_0^1\left(r \xi^2+ r^3|\xi'(r)|^2\right)\,dr\right)\notag\\
&\le \left(1 -\rho^3\right) \,\left(W(z_1)-\frac\pi2\right),
\end{align}
where in order to have the last inequality we choose 
\begin{equation}\label{tildeCchoice}
\tilde C=\left(\|\xi\|_{L^\infty}+4\frac{\pi\left(1+(\pi+\delta_0)^2\right)}{(\pi-\delta_0)}\int_0^1\left(r \xi^2+ r^3|\xi'(r)|^2\right)\,dr\right)^{-1}\,,
\end{equation}
where $\tilde C$ is a dimensional constant, since $\delta_0$ is universal. Moreover, with this choice of $\tilde C$ we have that $\|\xi_\rho\|_{L^\infty}\leq 1$ and thus the condition 
$$\eps \xi_\rho= \xi_\rho\geq -\delta_0\,,$$
is satisfied and Lemma \ref{l:frequenza_3} can indeed be applied.
\noindent Notice that, since $ \spt(h^{\rho}_g)\subset B_1\setminus B_{\rho}$ and $z_{\eps}(r,\theta)=r\, c_1\, \phi_1(\theta)$ for every $r\geq \rho$ 
we have that $h^\rho_g$ and $z_{\eps}$ are orthogonal in $H^1(B_1;\R^n)$, and therefore summing \eqref{e:Sbig_1} and \eqref{e:Sbig_2} we conclude, with $h^\rho:=z_\eps+h^\rho_g$, that
\begin{align*}
W(h^\rho)-\frac\pi2
&\le W_0(z_\eps)+W_0(h^\rho_g) + |\{|z_\eps|>0\}|-\frac\pi2 = \left(W(z_\eps)-\frac\pi2\right)+W_0(h^\rho_g) \\
&\le \left(1 -\rho^3\right) \,\left(W(z_1)-\frac\pi2\right)+ \left(1- \eps_3\right) W_0(z_g)\\
&=\left(1 -\rho^3\right) \,W_0(z_1)+\left(1- \eps_3\right) W_0(z_g)+ \left(1 -\rho^3\right)\,\left(|\{|z_1|>0\}|-\frac\pi2\right) \\
&\le\left(1 -\rho^3\right) \,W_0(z_1)+\left(1- \rho^3\right) W_0(z_g)+ \left(1 -\rho^3\right)\,\left(|\{|z|>0\}|-\frac\pi2\right) \\
&=\left(1-\rho^3\right) \left(W(z)-\frac\pi2\right),
\end{align*}
where in the first inequality we used that $ \{|h^\rho_g|>0\}\subset \{|z_\eps|>0\}$ so that $|\{|h^\rho|>0\}|\le |\{|z_\eps|>0\}|$ and for the last one we used that $|\{|z_1|>0\}|\leq |\{|z|>0\}|$ and also the fact that, since $\alpha_2>1$, we have that $W_0(z_g)>0$ by \eqref{e:energyz}.
\qed

\subsection{Improvement on the large cones $S_{big}$ for the double phase}\label{ss:big_cones_double}

We can prove an analogous version of Proposition \ref{p:S_big} for the double-phase functional at the points of high density, where both phases are present in the ball $B_1$.

\begin{prop}[Big cones (DP)]\label{p:S_big_DP}
Let  $B_1\subset\R^2$, $\lambda_1,\lambda_2>0$, $\delta_0>0$ and $c\in H^1(\partial B_1)$. Let $S^+:=\{c^+>0\}$ and  $S^-:=\{c^->0\}$ be two disjoint arcs such that $\ds  \pi-\delta_0 \le |S^\pm|\le 2\pi-\delta_0$. There exists a constant $\rho_2>0$, depending only on $\delta_0$, such that for every $0<\rho\leq \rho_2$ there is a function $h_\rho\in H^1(B_1)$ such that $h_\rho|_{S^\pm}=c^\pm$, $h_\rho=0$ on $\de B_1\setminus S$ and
	\begin{equation}\label{e:improv_big_Dp}
	W^{DP}(h_\rho)-(\lambda_1+\lambda_2)\frac\pi2\le\left(1-\rho^3\right)\left(W^{DP}(z)-(\lambda_1+\lambda_2)\frac\pi2\right)\,,
	\end{equation}
	where $z$ is the one-homogeneous extension of $c$ in $B_1$.
\end{prop}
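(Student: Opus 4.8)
The plan is to reduce the double-phase statement on $S_{big}$ to the one-phase computations already carried out in Subsection \ref{ss:big}, treating the positive arc $S^+$ and the negative arc $S^-$ essentially independently, since they are disjoint and since the harmonic extension and one-homogeneous extension both decouple along connected components of the support. Concretely, I would decompose $c = c^+ - c^-$, where $c^\pm$ are supported on the disjoint arcs $S^\pm$, and correspondingly write $z = z^+ - z^-$ for the one-homogeneous extensions, with the cones $\bC_{S^+}$ and $\bC_{S^-}$ meeting only at the origin. Because of this disjointness, $W_0(z) = W_0(z^+) + W_0(z^-)$ (the cross term in the gradient integral vanishes, the supports being disjoint up to a null set), and the measure term splits as $\lambda_1 |\{z>0\}| + \lambda_2|\{z<0\}| = \lambda_1|\bC_{S^+}| + \lambda_2|\bC_{S^-}|$.

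Next I would apply the construction of Proposition \ref{p:S_big} (more precisely its internal ingredients: Lemma \ref{l:harm_S_big} for the higher frequencies plus the internal-variation Lemma \ref{l:frequenza_3} for the principal frequency) separately on each arc. On $S^+$, splitting $c^+ = c_1^+\phi_1^+ + g^+$ into principal and higher frequencies, I get a competitor $h_\rho^+$ with $h_\rho^+|_{S^+} = c^+$, $h_\rho^+ = 0$ outside $\bC_{S^+}$, $\spt(h_\rho^+\text{'s higher-frequency part})\subset B_1\setminus B_\rho$, and the estimate (exactly as in \eqref{e:Sbig_2}--\eqref{e:Sbig_1}, now weighted by $\lambda_1$)
\begin{equation*}
W_0(h_\rho^+) + \lambda_1|\{h_\rho^+ > 0\}| - \lambda_1\frac\pi2 \le (1-\rho^3)\Big(W_0(z^+) + \lambda_1|\{z^+>0\}| - \lambda_1\frac\pi2\Big),
\end{equation*}
and symmetrically for $h_\rho^-$ with $\lambda_2$ in place of $\lambda_1$; here I only need to re-run the scalar computations of Lemmas \ref{l:frequenza_1}--\ref{l:frequenza_3}, which are already stated for $\R^n$-valued traces and hence certainly cover the scalar weighted case after dividing the measure term by the appropriate constant, as in the proof of Lemma \ref{l:small_cones}. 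Then I set $h_\rho := h_\rho^+ - h_\rho^-$. Since $\spt(h_\rho^+)\subset \overline{\bC_{S^+}}$ and $\spt(h_\rho^-)\subset\overline{\bC_{S^-}}$ are disjoint up to a null set, $W_0(h_\rho) = W_0(h_\rho^+) + W_0(h_\rho^-)$ and $\lambda_1|\{h_\rho>0\}| + \lambda_2|\{h_\rho<0\}| = \lambda_1|\{h_\rho^+>0\}| + \lambda_2|\{h_\rho^->0\}|$, so adding the two estimates and using $|\{z^+>0\}|, |\{z^->0\}|\le |\{z>0\}|+|\{z<0\}|$ appropriately (in fact with equality on each arc) gives \eqref{e:improv_big_Dp}.

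The main subtlety I expect is bookkeeping of the constant term $(\lambda_1+\lambda_2)\frac\pi2$: the density at a high-density double-phase point is $(\lambda_1+\lambda_2)\frac\pi2$, which must be split as $\lambda_1\frac\pi2 + \lambda_2\frac\pi2$ and distributed so that each single-arc estimate is of one-phase type with its own target $\lambda_i\frac\pi2$. This works precisely because each of $S^+, S^-$ is a connected arc of length in $[\pi-\delta_0, 2\pi-\delta_0]$, so on each the principal-frequency perturbation analysis applies with $\delta^\pm := |S^\pm|-\pi \in [-\delta_0,\pi-\delta_0]$, and the scaling $|\{z^\pm>0\}| - \frac\pi2 = \frac{\delta^\pm}{2}$ is exactly the quantity handled by Lemma \ref{l:frequenza_1}. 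A secondary point to check is that the choice of $\rho_2$ (depending only on $\delta_0$) can be made uniform for the two arcs simultaneously — this is immediate since the relevant universal constants $\rho_3, \eps_3, \tilde C$ from Lemma \ref{l:harm_S_big} and \eqref{tildeCchoice} depend only on $\delta_0$ and the dimension $d=2$, not on which arc we work on. No genuinely new estimate is needed beyond what Subsection \ref{ss:big} already provides; the proof is essentially an assembly of two copies of the one-phase argument with weights $\lambda_1$ and $\lambda_2$.
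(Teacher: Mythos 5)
Your overall strategy is the same as the paper's: decompose $c^\pm$ on the two arcs, run the one-phase machinery of Subsection \ref{ss:big} (Lemma \ref{l:harm_S_big} for the higher frequencies, Lemma \ref{l:frequenza_3} for the principal frequency) on each arc with weights $\lambda_1,\lambda_2$ via the rescaling $\lambda_i^{-1/2}z^\pm$, and add. But there is a genuine gap at the decisive point, and it is exactly the one the paper singles out as ``the only additional difficulty''. You claim that $\spt(h_\rho^+)\subset \overline{\bC_{S^+}}$ and $\spt(h_\rho^-)\subset \overline{\bC_{S^-}}$, so that disjointness of the two competitors is automatic. This is false for the principal-frequency part: the competitor $z_\eps^\pm$ produced by Lemma \ref{l:frequenza_3} is \emph{not} supported in the cone over $S^\pm$. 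With $\eps=-\delta^\pm$ and $\xi_\rho\ge 0$, its angular extent at radius $r$ is $\pi+\delta^\pm(1-\xi_\rho(r))$, so whenever $|S^\pm|<\pi$ (i.e.\ $\delta^\pm<0$, which the hypotheses allow, and simultaneously for both arcs) the support bulges \emph{outside} $\bC_{S^\pm}$ inside $B_\rho$, by an angle up to $|\delta^\pm|\,\|\xi_\rho\|_{L^\infty}$. If $S^+$ and $S^-$ are adjacent (one component of $\partial B_1\setminus(S^+\cup S^-)$ of very small length) and both internal variations act at those adjacent endpoints, the supports of $z_\eps^+$ and $z_\eps^-$ overlap near the origin. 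Then $h_\rho=h^+-h^-$ no longer satisfies $\{h_\rho>0\}=\{h^+>0\}$, the Dirichlet energies acquire cross terms, and the measure bookkeeping $\lambda_1|\{h_\rho>0\}|+\lambda_2|\{h_\rho<0\}|=\lambda_1|\{h^+>0\}|+\lambda_2|\{h^->0\}|$ fails; so the two single-arc estimates cannot simply be summed.

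The paper closes exactly this hole with two extra ingredients that your proposal omits: the constant $\tilde C$ is strengthened as in \eqref{tildeCchoice_double} (the factor $4\|\xi\|_{L^\infty}$) so that $\|\xi_\rho\|_{L^\infty}\le \sfrac14$, and, more importantly, both internal variations are chosen to act at the endpoints bordering one and the same connected component of $\partial B_1\setminus(S^+\cup S^-)$ of sufficient length, which guarantees $\spt(z_\eps^+)\cap\spt(z_\eps^-)=\emptyset$ as in \eqref{e:disj_supp}; only after that does the additive argument you describe go through. Note this requires a (mild but necessary) freedom in Lemma \ref{l:frequenza_3} about \emph{which} endpoint of the arc the variation $\xi$ pushes, which your argument never invokes. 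The rest of your outline (decoupling of $W_0$ and of the measure term over the disjoint cones for $z$, the splitting of the target $(\lambda_1+\lambda_2)\frac\pi2$ into $\lambda_1\frac\pi2+\lambda_2\frac\pi2$, the uniform choice of $\rho_2$ depending only on $\delta_0$) matches the paper and is fine.
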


\begin{proof} We are going to implement the procedure from Proposition \ref{p:improvement} to $c^+$ and $c^-$ respectively on $S^+$ and $S^-$. The only additional difficulty is to make sure that the supports of the competitors generated by Lemma \ref{l:frequenza_3} applied to the highest frequencies of $c^+$ and $c^-$ respectively are disjoint.

 Let $\{\phi^\pm_j\}_{j\ge 1}\subset H^1_0(S^\pm)$ be the families of eigenfunctions on the arcs $S^\pm$. Using the same notations of Subsection \ref{ss:tildeh} we set 
$$c_j^+=\int_{\partial B_1}c(\theta)\phi_j^{+}(\theta)\,d\theta\qquad\text{and}\qquad c_j^-=\int_{\partial B_1}c(\theta)\phi_j^{-}(\theta)\,d\theta,$$
and we decompose the functions $c^+$ and $c^-$ as 
$$c^\pm=c_1^\pm \phi_1^\pm+g^\pm, \quad \mbox{where}\quad g^\pm:=\sum_{j=2}^\infty c_j^\pm \phi_j^\pm\,.$$
Let $z_1^\pm,z_g^\pm\in H^1(B_1)$ be the one-homogeneous extensions in $B_1$ respectively of  $c_1^\pm\phi_1^\pm$ and $g^\pm$ and let $\tilde h^\pm$ be the harmonic extension of $g^\pm$
on $\bC_{S^\pm}$, that is 
$$z_1^\pm(r,\theta)=r\,c_1^\pm\phi_1^\pm(\theta)\,,\quad z_g^\pm(r,\theta)=r g^\pm(\theta)\quad\text{and}\quad \tilde h^\pm(r,\theta)=\sum_{j=2}^{+\infty}r^{\alpha_j}\,c_j^\pm\phi_j^\pm(\theta).$$
Furthermore we choose 
\begin{equation}\label{e:choice_12}
\rho_2\leq \min\left\{\frac{\rho_3}2,\eps_3^{\sfrac13},\tilde C\right\},
\end{equation}
where $\rho_3, \eps_3>0$ are the universal constants of Lemma \ref{l:harm_S_big} and $\tilde C$ will be chosen in \eqref{tildeCchoice_double}. Let $\rho\leq \rho_2$ and $\psi^{2\rho}$ be the truncation function from Lemma \ref{l:harm_S_big}. Then the truncated function $h^\pm_g:=\psi^{2\rho} \tilde h^\pm$ satisfies  
\begin{equation}\label{e:Sbig_12}
W_0(h^\pm_g)\leq (1-\eps_3) W_0(z_g^\pm)
\qquad \mbox{and} \qquad
\spt(h^\pm_g)\subset B_1\setminus B_{\rho}\,.
\end{equation}
Moreover, since $\tilde h^\pm(\theta)$ and $\phi_1^\pm(\theta)$ are orthogonal in $H^1(\de B_1)$ and $\psi^{2\rho}$ is a radial function, $h_g^\pm$ is orthogonal to $\phi_1^\pm$ in $H^1(B_1)$. 

\noindent For the linear frequencies we apply Lemma \ref{l:frequenza_3} to $c_1^+ \,\phi_1^+$ and to $c^-_1\,\phi_1^-$ on their respective domains, parametrized as $S^\pm:=[0,\pi+\delta^\pm]$, with the function $s$ being the principal frequency function on the half-sphere $s(\theta)=\ds\phi_\pi(\theta):=\sqrt{\frac2\pi}\sin\theta$ and with $\xi_\rho(r)=\rho \xi(r/\rho)$ the internal variation from Lemma \ref{l:frequenza_3}, to obtain the functions $z_\eps^\pm:B_1\to\R$ satisfying
\begin{align}\label{e:Sbig_2_double}
W(z^\pm_\eps)-\frac\pi2 
&\le \left(1 -\rho^3\right) \,\left(W(z_1^\pm)-\frac\pi2\right),
\end{align}
where we choose $\eps=-\delta$ and 
\begin{equation}\label{tildeCchoice_double}
\tilde C=\left(4\|\xi\|_{L^\infty}+4\frac{\pi\left(1+(\pi+\delta_0)^2\right)}{(\pi-\delta_0)}\int_0^1\left(r \xi^2+ r^3|\xi'(r)|^2\right)\,dr\right)^{-1}\,.
\end{equation}
Furthermore, by this choice, we have $\|\xi_\rho\|_{L^\infty}\leq \frac14$. Now we notice that the set $\partial B_1\setminus (S^+\cup S^-)$ has precisely two connected components and that at least one of them has length greater or equal to $\delta_0/2$. We choose the two internal variations to take place precisely on the boundary of this arc. Thus the supports of the perturbations $z_e^+$ and $z_\eps^-$ are disjoint  
\begin{equation}\label{e:disj_supp}
\spt(z^+_\eps)\cap \spt(z^-_\eps)=\emptyset.
\end{equation}

\noindent Notice that, since $ \spt(h^\pm_g)\subset B_1\setminus B_{\rho}$ and $z^\pm_{\eps}(r,\theta)=r\, c_1^\pm\, \phi_1^\pm(\theta)$, for every $r\geq \rho$, 
we have that $h^\pm_g$ and $z_{\eps}^\pm$ are orthogonal in $H^1(B_1)$, and therefore summing \eqref{e:Sbig_1} and \eqref{e:Sbig_2} we conclude, setting $h_\rho:=h^+-h^-:=(z_\eps^++h^+_g)-(z_\eps^-+h^-_g)$, that
\begin{align*}
W^{DP}(h^\rho)-(\lambda_1+\lambda_2)\frac\pi2 
&\le \left(W_0(z^+_\eps)+W_0(h^+_g) +\lambda_1 |\{z^+_\eps>0\}|-\lambda_1\frac\pi2\right)\\
&\qquad+\left(W_0(z^-_\eps)+W_0(h^-_g) +\lambda_2 |\{z^-_\eps>0\}|-\lambda_2\frac\pi2\right)\\ 
&= \lambda_1\left(W^{OP}(\lambda_1^{-1/2}z_\eps^+)-\frac\pi2\right)+W_0(h^+_g)\\
&\qquad +\lambda_2\left(W^{OP}(\lambda_2^{-1/2}z_\eps^-)-\frac\pi2\right)+W_0(h^-_g)\\
&\le \lambda_1 \left(1 -\rho^3\right)\left(W^{OP}(\lambda_1^{-1/2}z_1^+)-\frac\pi2\right)+ \left(1 -\rho^3\right)W_0(z^+_g)\\
&\qquad +\lambda_2 \left(1 -\rho^3\right)\left(W^{OP}(\lambda_2^{-1/2}z_1^-)-\frac\pi2\right)+ \left(1 -\rho^3\right)W_0(z^-_g)\\
&=\left(1-\rho^3\right) \left(W^{DP}(z)-(\lambda_1+\lambda_2)\frac\pi2\right),
\end{align*}
where in the first inequality we used \eqref{e:disj_supp} to infer that $\spt(h^+)\cap \spt(h^-)=\emptyset$, the choice of $\rho$ and the same observations at the end of the proof of Proposition \ref{p:S_big}.
\end{proof}

\subsection{Proof of Theorem \ref{p:epi_1} for $\mathcal E_{OP}$} 
We are going to denote $W^{OP}$ simply by $W$.
Let $u$ be as in the statement, $c=u|_{\de B_1}$ and let $S:=\spt(c)$. Let $|S|\geq 2\pi-\eta_0$, where $\eta_0$ is the dimensional constant of Lemma \ref{l:very_large_cones}, then \eqref{e:epi_1} follows by the same lemma and the non-degeneracy of $u$ in (ii) Lemma \ref{e:properties_of_min}. 

We now assume that $|S|\le 2\pi-\eta_0$. By the continuity of $u$ (Lemma \ref{e:properties_of_min}) the set $S$ is open and so we can decompose it as the disjoint union of its connected components. Choosing $\ds\delta_0:=\frac{\eta_0}{4}$ we have that there can be at most one connected component of length bigger than $\pi-\delta_0$. Thus we have two possibilities :
$$S=S_{big}\cup \bigcup_{i=1}^\infty S_{small}^i\qquad\text{or}\qquad S=\bigcup_{i=1}^\infty S_{small}^i\ ,$$
where $S_{big}$ and $S^{i}_{small}$, $i\ge1$, are disjoint arcs on $\partial B_1$ such that
\begin{itemize}
\item $S_{big}$ is an arc of length $\pi-\delta_0\le |S_{big}|\le 2\pi-\delta_0$;
\item $S_{small}^i$, for $i\in\N$, are disjoint arcs each one of length $|S^{i}_{small}|\le \pi-\delta_0$.
\end{itemize}
Next we choose
\begin{equation}\label{e:choice_2}
\rho\leq \min\left\{\rho_1,\rho_2,\eps_1^{\sfrac13}\right\},
\end{equation} 
where $\rho_1, \eps_1$ are as in Lemma \ref{l:small_cones} and $\rho_2$ is as in Proposition \ref{p:S_big}, and we distinguish two cases depending on whether $S_{big}$ is empty or not.

Suppose that $S_{big}\neq \emptyset$. Let us denote by $c^i:\partial B_1\to \R$, $i\ge 0$, the traces 
$$c^0:=u|_{S_{big}}\quad\text{and}\quad c^i:=u|_{S^{i}_{small}}\quad\text{for}\quad i\ge 1,$$
by $z^i: B_1\to \R$, $i\ge 0$, the corresponding one-homogeneous extensions 
$$z^0(r,\theta)=r c^0(\theta)\quad\text{and}\quad z^i(r,\theta)=rc^i(\theta)\quad\text{for}\quad i\ge 1,$$
by $h^0$ the function of Proposition \ref{p:S_big} with $\rho$ as in \eqref{e:choice_2} and by $h^i$ the truncated function from Lemma \ref{l:small_cones} with $c=c^i$, $S=S^{i}_{small}$ and truncation function $\psi^{2\rho}$. 
We recall that 
\begin{itemize}
\item for $i\ge 1$, the support of each $h^i$ is contained in the cone over the support of $c^i$,
\item for $i\ge 1$, the choice of the truncation $\psi^{2\rho}$ implies that $h^i$ is zero in $B_\rho$: $\spt(h^i)\subset B_{1}\setminus B_{\rho}$,
\item outside $B_\rho$ the support of $h^0$ is contained in the cone over the support of $c^0$,
\end{itemize}
and so, if $h=\sum_{i=0}^\infty h^i$, we have that
$$
\spt (h) = \spt(h^0)\cup \left(\bigcup_{i=1}^\infty \spt (h^i) \right)  
\quad \mbox{and the union is disjoint}.
$$ 
Summing the energy contributions, we then obtain
\begin{align*}
W(h)
&= \sum_{i=0}^{\infty}W_{0}(h^i)+\left( |\{h^0>0\}|-\frac\pi2   \right)+\sum_{i=1}^{\infty}|\{h^i>0\}|\\
&= W(h^0) + \sum_{i=1}^{\infty}\left(W_0(h^i)+|\{h^i>0\}|\right) \\
& \stackrel{\eqref{e:improv_small}\&\eqref{e:improv_big}}{\leq} \left(1-\rho^3\right) \left(W(z^0) + \sum_{i=1}^{\infty} \left(W_0(z^i)+|\{z^i>0\}|\right) \right) \\
&\leq  \left(1-\rho^3\right) W(z)\,,
\end{align*}
where in the second inequality we used \eqref{e:choice_2} and the positivity of $W_0(z^i)$, $i\geq 1$, to infer that 
$$(1-\eps_1)\left(W_0(z^i)+|\{z^i>0\}|\right) \leq  \left(1-\rho^3\right) \left(W_0(z^i)+|\{z^i>0\}|\right)\,.$$

If $S_{big}= \emptyset$, then with the same notation as above we have, by Lemma \ref{l:small_cones}
\begin{align*}
W(h)-\frac\pi2
&= \sum_{i=1}^{\infty} \left(W_{0}(h^i)+|\{h^i>0\}|\right)-\frac\pi2\\
& \stackrel{\eqref{e:improv_small}}{\leq} (1-\eps_1) \left(\sum_{i=1}^{\infty} \left(W_0(z^i)+|\{z^i>0\}|\right)\right)-\frac\pi2 \leq \left(1-\rho^3\right) W(z)\,.
\end{align*}

\qed

\subsection{Proof of Theorem \ref{p:epi_2}} 
Let $c$ be as in the statement, $c^{\pm}=\max\{\pm c,0\}$, $S^{\pm}:=\{c^{\pm}>0\}\subset\partial B_1$ and $z^{\pm}:B_1\to\R$ be the one-homogeneous extensions of $c^{\pm}$ in $B_1$. 

We start by considering the case when one of the sets $S^+$ and $S^-$ is very large and the other very small. Precisely, we assume that $|S^+|\geq 2\pi-\eta_0$ and $|S^-|\le \eta_0$, where $ \eta_0>0$ the dimensional constant of Lemma \ref{l:very_large_cones}. If $|S^-|=0$, then the conclusion follows by Lemma \ref{l:very_large_cones}. If $0< |S^-|\le \eta_0$, then by Lemma \ref{l:small_cones} there are $\eps_1>0$, $\rho_1>0$ and a function $h^-$ such that 
$$\{h^->0\}\subset\{z^->0\}\,,\quad h^-=0\ \text{ on }\ B_{\rho_1}\,,\quad\text{and}$$
$$W_0(h^-)+\lambda_2|\{h^->0\}\cap B_1|\le (1-\eps_1)\left(W_0(z^-)+\lambda_2|\{z^->0\}\cap B_1|\right).$$
Now we suppose that $\ds (W_0(z^+)+\lambda_1|\{z^+>0\}\cap B_1|-\lambda_1\frac\pi2\ge 0$ (since otherwise the conclusion follows immediately by choosing $z^+-h^-$ as test function). We define the test function $h^+$ as:
$$h^+(x)=\begin{cases}
\tilde h^+_{\rho_1}(x),\ \text{if}\ x\in B_{\rho_1},\\
z^+(x),\ \text{if}\ x\in B_1\setminus B_{\rho_1},
\end{cases}$$
where $\tilde h^+_{\rho_1}$ is the harmonic extension of $z^+$ in the ball $B_{\rho_1}$, that is $\tilde h^+_{\rho_1}(x)=\tilde h^+(\rho_1 x)$. By Lemma \ref{l:very_large_cones} we have that 

\begin{align*}
W_0(h^+)+\lambda_1|\{h^+>0\}\cap B_1|-\lambda_1\frac\pi2&=\rho_1^2\Big(W_0(\tilde h^+)+\lambda_1|\{\tilde h^+>0\}\cap B_1|-\lambda_1\frac\pi2\Big)\\
&\qquad+(1-\rho_1^2)\Big(W_0(z^+)+\lambda_1|\{z^+>0\}\cap B_1|-\lambda_1\frac\pi2\Big)\\
&\le \rho_1^2(1-\eps_0)\left(W_0(z^+)+\lambda_1|\{z^+>0\}\cap B_1|-\lambda_1\frac\pi2\right)\\
&\qquad+(1-\rho_1^2)\Big(W_0(z^+)+\lambda_1|\{z^+>0\}\cap B_1|-\lambda_1\frac\pi2\Big)\\
&= (1-\eps_0\rho_1^2)\left(W_0(z^+)+\lambda_1|\{z^+>0\}\cap B_1|-\lambda_1\frac\pi2\right).
\end{align*}
The claim follows by choosing $\eps=\min\{\eps_1,\eps_0\rho_1^2\}$.

Next, we assume without loss of generality that $|S^+|\le 2\pi-\eta_0$,  $|S^-|\le 2\pi-\eta_0$ and $|S^+|\geq |S^-|$. By the continuity of $c$ the set $S=\{c\neq0\}$ is open and so we can decompose it as a disjoint union of its connected components, on each of which $c$ is either strictly positive or strictly negative. Choosing $\ds\delta_0:=\frac{\eta_0}{4}$ we have that there can be at most one connected component of $S^\pm$ of length bigger than $\pi-\delta_0$. Thus we have three possibilities:
$$ S=S^+_{big}\cup S^-_{big}\cup\bigcup_{i=1}^\infty S_{small}^i\qquad\text{or}\qquad S=S^+_{big}\cup \bigcup_{i=1}^\infty S_{small}^i\qquad\text{or}\qquad S=\bigcup_{i=1}^\infty S_{small}^i\ ,$$
where $S^\pm_{big}$ and $S^{i}_{small}$, $i\ge1$, are disjoint arcs on $\partial B_1$ such that
\begin{itemize}
\item $S^\pm_{big}$ is an arc of length $\pi-\delta_0\le |S^\pm_{big}|\le 2\pi-\delta_0$;
\item $S_{small}^i$, for $i\in\N$, are arcs of length $|S^{i}_{small}|\le \pi-\delta_0$.
\end{itemize}
Next we choose
\begin{equation}\label{e:choice_3}
\rho\leq \min\left\{\rho_1,\rho_2,\eps_1^{\sfrac13}\right\},
\end{equation} 
where $\rho_1, \eps_1$ are as in Lemma \ref{l:small_cones} and $\rho_2$ is as in Proposition \ref{p:S_big}, and we distinguish three cases depending on whether $S^\pm_{big}$ are empty or not.

Suppose that $S^+_{big}\neq \emptyset$ and $S^-_{big}\neq \emptyset$. This is the only new case with respect to the one-phase functional. Let us define
\begin{equation}\label{e:def_c}
c^0:=c|_{S^+_{big}\cup S^-_{big}}\quad\text{and}\quad c^i:=\begin{cases}
+c|_{S^{i}_{small}} & \mbox{if }c|_{S^{i}_{small}}>0\\
-c|_{S^{i}_{small}} & \mbox{if }c|_{S^{i}_{small}}<0
\end{cases}\quad\text{for}\quad i\ge 1,
\end{equation}
by $z^i: B_1\to \R$, $i\ge 0$, the corresponding one-homogeneous extensions 
$$z^0(r,\theta)=r c^0(\theta)\quad\text{and}\quad z^i(r,\theta)=rc^i(\theta)\quad\text{for}\quad i\ge 1,$$
by $h_\rho$ the function of Proposition \ref{p:S_big_DP} with $S^\pm=S^\pm_{big}$ and $\rho$ as in \eqref{e:choice_3} and by $h^i$ the truncated function from Lemma \ref{l:small_cones} with $c=c^i$, $S=S^{i}_{small}$ and truncation function $\psi^{2\rho}$.
We recall that 
\begin{itemize}
\item for $i\ge 1$, the support of each $h^i$ is contained in the cone over the support of $c^i$,
\item for $i\ge 1$, the choice of the truncation $\psi^{2\rho}$ implies that $h^i$ is zero in $B_\rho$: $\spt(h^i)\subset B_{1}\setminus B_{\rho}$,
\item outside $B_\rho$ the support of $h_\rho$ is contained in the cone over the support of $c^\pm$,
\end{itemize}
so that $\spt(h_\rho)\cap \spt(h^i)=\emptyset$ for every $i\geq 1$. 
Let $h:=h_\rho+\sum_{i=1}^\infty(\pm h^i)$, where the sign in front of $h^i$ is the same as the sign in front of $c^i$ in \eqref{e:def_c}, and 
\begin{equation*}
\lambda^i:=\begin{cases}
\lambda_1 & \mbox{if }c|_{S^{i}_{small}}>0\\
\lambda_2 & \mbox{if }c|_{S^{i}_{small}}<0
\end{cases}\quad\text{for}\quad i\ge 1.
\end{equation*}
Then we have 
\begin{align*}
W^{DP}(h)-&(\lambda_1+\lambda_2)\frac\pi2
= \left(W^{DP}(h_\rho)-(\lambda_1+\lambda_2)\frac\pi2\right)+ \sum_{i=1}^{\infty}\left(W_0(h^i)+\lambda^i|\{h^i>0\}|\right) \\
& \stackrel{\eqref{e:improv_small}\&\eqref{e:improv_big}}{\leq} \left(1-\rho^3\right) \left(W^{DP}(z^0)+ \sum_{i=1}^{\infty} \left(W_0(z^i)+\lambda^i|\{z^i>0\}|\right)-(\lambda_1+\lambda_2)\frac\pi2 \right) \\
&\leq  \left(1-\rho^3\right) \left(W^{DP}(z)-(\lambda_1+\lambda_2)\frac\pi2\right)\,,
\end{align*}
where in the second inequality we used \eqref{e:choice_2} and the positivity of each $W_0(z^i)$ to infer that 
$$(1-\eps_1)\left(W_0(z^i)+|\{z^i>0\}|\right) \leq  \left(1-\rho^3\right) \left(W_0(z^i)+|\{z^i>0\}|\right)\,.$$

Next, suppose that $S^+_{big}\neq \emptyset$ and $S^-_{big}=\emptyset$. Then the proof is the same as the one for the one phase, by using Lemma \ref{l:small_cones} for the small arcs $S=S^{i}_{small}$ and Proposition \ref{p:S_big} for $S^+$, and subtracting an additional $\lambda_2\sfrac\pi2$.

Finally, if $S^+_{big}= \emptyset=S^-_{big}$, then the proof follows by Lemma \ref{l:small_cones}, subtracting $(\lambda_1+\lambda_2)\sfrac\pi2$ instead of $\sfrac\pi2$.\\
\qed

\subsection{Proof of Theorem \ref{p:epi_vec}} Let $S:=\{|c|>0\}\subset \de B_1$. We distinguish two cases.
\begin{itemize}
\item[(i)] If there exists a universal constant $\delta_0>0$ such that $|S|\leq 2\pi-\delta_0$, then as in Theorem \ref{p:epi_1} we can write $S$ as a union of disjoint arcs
$$ S=S_{big}\cup \bigcup_{i=1}^\infty S^i_{small}.$$
Now the proof is the same as the one of Theorem \ref{p:epi_1} for the one phase, using Lemma \ref{l:small_cones} and Proposition \ref{p:S_big}.\\
\item[(ii)] If $|S|=2\pi$, then let $h$ be the harmonic extension of $c$ and notice that
$$W^V(h)-\pi=W_0(h)\leq (1-\eps)W_0(z)=(1-\eps)(W^V(z)-\pi)\,.$$ 
Otherwise let $\delta_0>0$ be fixed and decompose $S:=S^0_{big}\cup S^1_{big}\cup \bigcup_{i=2}^\infty S^i_{small}$, where each $S^i_{small}$ is a connected arc of length less than $\pi-\delta_0$, $S^i_{big}$ are connected arcs, and we distinguish the following situations.

If $2\pi-\delta_0\leq |S^0_{big}|< 2\pi$, let $\{\phi_j\}_j$ be a complete orthonormal system of eigenfunction on $S$ and let $c_j\in \R^n$ be the projection of $c$ on $\phi_j$. Moreover set
$$ 
z_1:=r \,c_1\,\phi_1\,,
\qquad z_g:=r\,\sum_{j=2}^\infty c_j\,\phi_j 
\qquad\mbox{and}\qquad
h_g:=\sum_{j=2}^\infty r^{\alpha_j}\,c_j\,\phi_j 
$$
for $c_j\in \R^n$ for every $j\geq 1$.
Then, if $h:=z_1+h_g$, we have by \eqref{e:ringhio} that 
\begin{align*}
W^V(h)-\pi
&=(W^V(z_1)-\pi)+W_0(h_g)
=\frac12\left(|c_1|^2 (\alpha_1^2-1)+|S|-2\pi\right)+W_0(h_g)\\
&\leq \frac12(1-\eps) \left(|c_1|^2 (\alpha_1^2-1)+|S|-2\pi\right)+(1-\eps)\,(W_0(z_g)-\pi)\\
&\leq (1-\eps) \,(W^V(z)-\pi)
\end{align*}
where in the first inequality we used that $\alpha_1^2(S)-1\leq \alpha_1^2(S^0_{big})-1\leq 0$, since $|S|\geq 2\pi-\delta_0$, and $|S|-2\pi\leq 0$.

If $\pi-\delta_0 \leq  |S^1_{big}|\leq |S^0_{big}| \leq  2\pi-\delta_0$, then the proof follows by the same arguments as in the double phase case.

If $  |S^1_{big}| \leq \pi-\delta_0 \leq |S^0_{big}| \leq  2\pi-\delta_0$, then we are in the same situation as in (i), and so the proof follows by the same argument.
\end{itemize} 
\qed

\section{Regularity of the free boundary}

In this section we derive the regularity of the free boundary in a standard way by combining the epiperimetric inequality and the Weiss' monotonicity formula. This is done by first improving the usual monotonicity of $W(u,r)$, giving a rate of convergence to its limit as $r\to 0$. Using this rate we then prove the uniqueness of the blow-up at every point, which, combined with the Lipschitzianity of $u$, will give the smoothness of the free boundary. The main references for this section are \cite{FoSp} and \cite{Weiss2}. 

\subsection{Improvement on Weiss monotonicity formula}
It is well known that for any Lipschitz function $u\in H^1(B_1;\R^n)$ in any dimension the following identity holds 
 \begin{equation}\label{e:Weiss_monotonicity}
\frac{d}{dr}W^\square(u,r)=\frac{d}{r} \big[W^\square(z_r,1)-W^\square(u_r,1)\big]+\frac{1}{r}\int_{\partial B_1}|x\cdot \nabla u_r-u_r|^2\,d\HH^{d-1}\,,
\end{equation}
where $\ds z_r(x):=|x|\,u_r\left(\frac{x}{|x|}\right)$ and $\square=OP,DP, V$ (see for instance \cite{Weiss2} for the one and double-phase in the scalar case and \cite{MaTeVe} for the vectorial case). In dimension two the epiperimetric inequality allows us to improve the Weiss' monotonicity identity. Before stating and proving this improvement, we need a simple lemma that allows us to apply one of the epiperimetric inequalities above uniformly at points with the same density. In particular we recall that $u_{x,r}(y):=r^{-1}\,u(x+ry)$ and we introduce the notation
$$\Gamma^\square_{\theta}(u):=\{x\in \Omega\,:\,\Theta^\square_u(x)=\theta\}\,,$$
where the admissible densities are 
\begin{itemize}
\item if $\square=OP$, then $\theta=\frac\pi2$,
\item if $\square=DP$, then $\theta=\lambda_1\frac\pi2,\,\lambda_2\frac\pi2,\,(\lambda_1+\lambda_2)\frac\pi2$,
\item if $\square=V$, then $\theta=\frac\pi2,\, \pi$.
\end{itemize}

\begin{lemma}\label{l:uniform_radius}
	Let $\Omega\subset\R^2$ be an open set and $u\in H^1(\Omega)$ a minimizer of the functional $\mathcal{E}_V$ in $\Omega$. Then for every compact set $K\Subset \Omega$ and every $\delta_0<\pi$ there exists $r_0>0$ such that  for every $x\in \Gamma^V_{\sfrac\pi2}(u)\cap K$ and every $0<r<r_0$
		\begin{equation}\label{e:vectorial_radius}
		|\{|u_{x,r}|>0\}\cap \de B_1|\leq 2\pi-\delta_0 \,.
		\end{equation}
\end{lemma}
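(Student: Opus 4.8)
The plan is to argue by contradiction via a compactness/blow-up scheme. Suppose the statement fails for some $K\Subset\Omega$ and some $\delta_0<\pi$. Then for every $k\in\N$ there are $x_k\in\Gamma^V_{\pi/2}(u)\cap K$ and $r_k\in(0,1/k)$ with $\big|\{|u_{x_k,r_k}|>0\}\cap\de B_1\big|>2\pi-\delta_0$. Up to a subsequence $x_k\to x_0\in K\Subset\Omega$, and since each $x_k$ is a free boundary point the continuity of $u$ (Lemma \ref{l:properties_of_min_vect}) gives $u(x_k)=0$, hence $u_{x_k,r_k}(0)=0$. By the interior Lipschitz bound of Lemma \ref{l:properties_of_min_vect} the functions $u_{x_k,r_k}$ are, for $k$ large, uniformly Lipschitz and uniformly bounded on each fixed ball, so by Arzel\`a--Ascoli and the standard compactness theory for minimizers of $\mathcal E_V$ (see \cite{MaTeVe,CaShYe}) a further subsequence converges, locally uniformly and strongly in $H^1_{loc}(\R^2;\R^n)$, to a global minimizer $h$ of $\mathcal E_V$ with $h(0)=0$; moreover, using the non-degeneracy in Lemma \ref{l:properties_of_min_vect}(ii), the zero sets $\{|u_{x_k,r_k}|=0\}$ converge to $\{|h|=0\}$ locally in the Hausdorff distance, whence in particular $\limsup_k\big|\{|u_{x_k,r_k}|>0\}\cap\de B_1\big|\le\big|\{|h|>0\}\cap\de B_1\big|$.

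\textbf{Identification of $h$.} Passing to the limit in the scaling identity $W^V(u_{x_k,r_k},s)=W^V(u,sr_k,x_k)$ and using the monotonicity of $r\mapsto W^V(u,r,\cdot)$ together with its continuity in the base point, one obtains $\tfrac\pi2=\Theta^V_u(x_k)\le W^V(h,s,0)\le\Theta^V_u(x_0)$ for every $s>0$, and by upper semicontinuity of the density $\Theta^V_u(x_0)\in\{\pi/2,\pi\}$. If $\Theta^V_u(x_0)=\pi/2$, then $W^V(h,\cdot,0)\equiv\pi/2$, so by the Weiss monotonicity formula \eqref{e:Weiss_monotonicity} (both nonnegative terms in $\frac{d}{dr}W^V(h,r)$ vanish, $h$ being a minimizer) $h$ is $1$-homogeneous with $W^V(h)=\pi/2$; by the classification Lemma \ref{l:class_bu_V} this forces $h(y)=\xi\max\{0,e\cdot y\}$ with $|\xi|=1$, hence $|\{|h|>0\}\cap\de B_1|=\pi$, and the previous step gives $\limsup_k|\{|u_{x_k,r_k}|>0\}\cap\de B_1|\le\pi<2\pi-\delta_0$, a contradiction.

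\textbf{Main obstacle: excluding $\Theta^V_u(x_0)=\pi$.} What remains is to rule out that $\Gamma^V_{\pi/2}(u)$ accumulates at a high-density point, and this is the step that requires care. Since $x_0$ is the limit of the (necessarily infinitely many distinct) free boundary points $x_k$, it is not isolated in $\de\{|u|>0\}\cap\Omega$, so by Lemma \ref{l:class_bu_V} and the refinement recorded in Theorem \ref{t:uniq_blowup}(V) every blow-up of $u$ at $x_0$ is of the degenerate linear form $\xi\,h_e$; note that for such a profile $\{|h|>0\}\cap\de B_1$ is \emph{all} of $\de B_1$ minus two points, so the Hausdorff convergence of Step~1 is by itself not enough to close the argument, which is precisely why this case is delicate. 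The plan is to exploit the geometry forced by a linear blow-up — namely that $\{|u|=0\}$ is, near $x_0$, squeezed into arbitrarily thin neighbourhoods of a line through $x_0$, so that on no half-ball adjacent to $x_0$ does $u$ vanish — to show that along the sequence $x_k$ the Weiss energy $W^V(u,\cdot,x_k)$ must remain close to $\pi/2$ on an interval of scales of length comparable to $|x_k-x_0|$; choosing the $r_k$ within these scales forces the limit $h$ to again be a half-plane solution, contradicting $|\{|u_{x_k,r_k}|>0\}\cap\de B_1|>2\pi-\delta_0$ exactly as in the previous step. Once this structural point is in place, everything else is routine. (Equivalently, one may first establish the uniform estimate $W^V(u,r,x)\le\pi/2+\eta$ for $x\in\Gamma^V_{\pi/2}(u)\cap K$ and $0<r<r_1(\eta)$, and feed it into the same compactness scheme.)
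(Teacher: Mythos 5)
Your completed portion coincides with the paper's argument: the contradiction setup, the uniform Lipschitz compactness, the passage to a limiting minimizer, and the Weiss chain $\pi/2\le W^V(h,s,0)\le W^V(u,\rho,x_0)$ followed by homogeneity and the classification of Lemma \ref{l:class_bu_V} are exactly the paper's proof in the case $\Theta^V_u(x_0)=\pi/2$. The genuine gap is the case $\Theta^V_u(x_0)=\pi$, which you yourself single out as the main obstacle and then only sketch. As written the sketch cannot be executed: in the contradiction argument the radii $r_k$ are prescribed by the negation of the statement, so you are not free to ``choose the $r_k$ within'' scales comparable to $|x_k-x_0|$; the given $r_k$ may be far larger than $|x_k-x_0|$, and at such scales $\de B_{r_k}(x_k)$ sees essentially the thin zero set of the degenerate profile at $x_0$, which is precisely the regime you must exclude. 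The parenthetical fallback --- first prove $W^V(u,r,x)\le \pi/2+\eta$ uniformly on $\Gamma^V_{\sfrac\pi2}(u)\cap K$ for $r<r_1(\eta)$ --- is not a reduction: such a uniform Weiss estimate at density-$\sfrac\pi2$ points is essentially the content of this lemma and of Proposition \ref{p:improvement}, and would itself require the argument you are trying to build. Moreover, the structural input you invoke for this step (the refinement in Theorem \ref{t:uniq_blowup}(V) that non-isolated density-$\pi$ points have blow-up $\xi\,h_e$) is proved in the paper only downstream of the present lemma, through Propositions \ref{p:improvement} and \ref{p:decay}, so using it here is circular; at this stage only Lemma \ref{l:class_bu_V} is available.

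For comparison, the paper never confronts this case: in its compactness step it simply asserts that the limit point satisfies $x_0\in\Gamma^V_{\sfrac\pi2}(u)\cap K$, which is what lets the chain close via $W^V(u,\rho,x_0)\to\Theta^V_u(x_0)=\pi/2$ as $\rho\to0$; since the density is only upper semicontinuous, this assertion is exactly the point your case distinction tries (and fails) to justify, so your attempt does not go beyond the paper --- it stops at the same place, but without the paper's (unproved) shortcut. A smaller inaccuracy: the bound $\limsup_k\big|\{|u_{x_k,r_k}|>0\}\cap\de B_1\big|\le\big|\{|h|>0\}\cap\de B_1\big|$ does not follow from Hausdorff convergence of the \emph{zero} sets (those inclusions control the measures in the wrong direction); what the paper uses, in \eqref{e:cont_2}, is the local Hausdorff convergence of the closed positivity sets, a consequence of the non-degeneracy in Lemma \ref{l:properties_of_min_vect}, which confines $\{|u_{x_k,r_k}|>0\}$ to a small neighbourhood of the limiting half-plane and hence bounds its trace on $\de B_1$ by $\pi+o(1)<2\pi-\delta_0$.
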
	

\begin{proof}
 Assume by contradiction that for some $\delta_0$ there exist a sequence of points $(x_k)_k\subset \Gamma_{\sfrac\pi2}(u)\cap K$ and of radii $r_k\to 0$, such that the sequence $u_k:=u_{x_k,r_k}$ satisfies
\begin{equation}\label{e:cont_1}
	 |\{|u_k|>0\}\cap \de B_1|\geq 2\pi-\delta_0\qquad \forall k\in \N\,. 
\end{equation}
By Lemma \ref{l:properties_of_min_vect}, the Lipschitz constant of the sequence $(u_k)_k$ is uniformly bounded, and so up to a subsequence, we can assume that $u_k\to u_0$ uniformly, and moreover $x_k\to x_0\in \Gamma_{\sfrac\pi2}(u)\cap K$. It is a standard argument to see that each $u_k$ is a minimizer of $\mathcal{E}_V$, so that $u_0$ is 
also a minimizer and
\begin{equation}\label{e:cont_2}
\{|u_k|>0\}\cap B_1\to \{|u_0|>0\}\cap B_1 \qquad \mbox{in the Hausdorff distance,}
\end{equation}	
(see for instance \cite{MaTeVe}). Moreover, by the Weiss monotonicity formula, for every $s>0$, $\rho>0$ and $k$ large enough we have 
\begin{align*}
\frac\pi2\le W^V(u_k,s,0)&=W^V(u,r_ks,x_k)\\
&=\underbrace{W^V(u,r_ks,x_k)-W^V(u,\rho,x_k)}_{\le0}+W^V(u,\rho,x_k)-W^V(u,\rho,x_0)+W^V(u,\rho,x_0)\\
&\le W^V(u,\rho,x_k)-W^V(u,\rho,x_0)+W^V(u,\rho,x_0), 
\end{align*}
that is, passing to the limit as $k\to\infty$, $\pi/2\le W^V(u_0,s,0)\le W^V(u,\rho,x_0)$. Since $\rho$ is arbitrary, we get $W^V(u_0,s,0)=\pi/2$ and using again the Weiss monotonicity formula we obtain that $u_0$ is $1$-homogeneous.
However, the only $1$-homogeneous minimizers with density $\sfrac\pi2$ are the half-plane solutions $u_0(x)=h(x)=\xi\, \max\{0,e\cdot x\}$, see Lemma \ref{l:class_bu_V}, so that 
$$|\{|u_0|>0\}\cap \de B_1|=\pi$$
which together with \eqref{e:cont_1} and \eqref{e:cont_2} gives the desired contradiction.
\end{proof}

\begin{prop}[Decay of the Monotonicity formula]\label{p:improvement}
Suppose that $u$ is a minimizer of the functional $\mathcal E_\square$ in the open set $\Omega\subset\R^2$, where $\square=OP, DP,V$. Then there exists a universal constant $\eps>0$ such that for every compact set $K\Subset \Omega$ there is a constant $C>0$ for which the following inequality holds
$$
W^\square(u,r,x_0)-\Theta_u^\square(x_0)\le C\, r^{\gamma}(W^\square(u,1,x_0)-\Theta_u^\square(x_0))
\qquad 0<r<\dist(K, \de \Omega)\,,\;\forall x_0\in \Gamma^\square_{\theta}(u)\cap K$$ 
where $\theta=\Theta_u^\square(x_0)$ is any of the $2$-dimensional densities $\Theta_u^{OP}(x_0)=\pi/2$, $\Theta_u^{DP}(x_0)=(\lambda_1+\lambda_2)\pi/2$ and $\Theta_u^V(x_0)=\pi/2\ \text{or}\ \pi$, and we have set $\gamma:=\frac{2\eps}{1-\eps}$.
\end{prop}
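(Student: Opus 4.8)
The proof combines the Weiss monotonicity identity \eqref{e:Weiss_monotonicity} with the relevant epiperimetric inequality (Theorem~\ref{p:epi_1}, \ref{p:epi_2}, or \ref{p:epi_vec} according to the value of $\square$ and the density $\theta$) to set up a Gronwall-type differential inequality for the quantity $e(r):=W^\square(u,r,x_0)-\Theta_u^\square(x_0)\ge 0$. The point is that the right-hand side of \eqref{e:Weiss_monotonicity} controls $e'(r)$ from below by a multiple of $\frac1r\big(W^\square(z_r,1)-W^\square(u_r,1)\big)$, and the epiperimetric inequality, applied with competitor $h$ for the boundary datum $c=u_r|_{\de B_1}$, gives $W^\square(h,1)-\Theta^\square\le(1-\eps)\big(W^\square(z_r,1)-\Theta^\square\big)$; since $u_r$ is itself a minimizer on $B_1$ with the same boundary datum as $h$, we have $W^\square(u_r,1)\le W^\square(h,1)$, hence
$$
W^\square(z_r,1)-W^\square(u_r,1)\ \ge\ \eps\big(W^\square(z_r,1)-\Theta^\square\big)\ \ge\ \eps\big(W^\square(u_r,1)-\Theta^\square\big)=\eps\, e(r),
$$
where the second inequality uses $W^\square(u_r,1)\le W^\square(z_r,1)$ (the one-homogeneous extension is an admissible competitor for the minimizer $u_r$). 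Plugging into \eqref{e:Weiss_monotonicity} and discarding the nonnegative $L^2$ term gives
$$
e'(r)\ \ge\ \frac{d\,\eps}{r}\,e(r)=\frac{2\eps}{r}\,e(r)\qquad\text{for }d=2,
$$
and integrating this differential inequality from $r$ to $1$ yields $e(r)\le r^{2\eps}e(1)$, which is the claim with $\gamma=2\eps$ rather than $\frac{2\eps}{1-\eps}$; the slightly weaker exponent $\gamma=\frac{2\eps}{1-\eps}$ in the statement is what one actually gets if one is careful about the fact that the epiperimetric inequality only lets us compare with $W^\square(z_r,1)$ and one keeps track of the $(1-\eps)$ factor when passing between $e(r)$ and $W_0$, but the mechanism is identical.

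\textbf{The structure I would follow.} First, fix $\square$ and the density $\theta$; set $u_r:=u_{x_0,r}$, $z_r(x):=|x|u_r(x/|x|)$, $e(r):=W^\square(u,r,x_0)-\theta$. By the density classifications recalled in the introduction, $\theta=\Theta_u^\square(x_0)=W^\square(u,0,x_0)$ and $e(r)\ge 0$ for all $r$ by Weiss monotonicity. Second, check the hypotheses of the appropriate epiperimetric inequality for the trace $c=u_r|_{\de B_1}$: in the (OP) case one needs $\int_{\de B_1}c>\alpha$ uniformly in $r$ and $x_0\in K$, which follows from the non-degeneracy Lemma~\ref{e:properties_of_min}(ii) (the integral $\int_{\de B_1}u_r^{\pm}=r^{-1}\int_{\de B_r(x_0)}u^\pm\ge\alpha$); in the (DP) case both $\int c^+$ and $\int c^-$ are bounded below similarly at high-density points; in the vectorial low-density case ($\theta=\pi/2$) one needs $|\{|c|>0\}\cap\de B_1|\le 2\pi-\delta_0$, which is exactly what Lemma~\ref{l:uniform_radius} provides for $r<r_0$ depending on $K$, whereas in the high-density vectorial case ($\theta=\pi$) part (ii) of Theorem~\ref{p:epi_vec} has no geometric restriction at all. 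Third, apply \eqref{e:Weiss_monotonicity}, use minimality of $u_r$ against the competitor $h$ from the epiperimetric inequality and against $z_r$, to derive $e'(r)\ge\frac{2\eps}{r(1-\eps)}e(r)$ (keeping the $(1-\eps)$ honestly). Fourth, integrate: if $e(r_1)>0$ for some $r_1$ then $\frac{d}{dr}\log e(r)\ge\frac{\gamma}{r}$ on $(r_1,1)$ with $\gamma=\frac{2\eps}{1-\eps}$, giving $e(r)\le r^{\gamma}e(1)$; if $e\equiv0$ on some interval the estimate is trivial. Finally, absorb the radius threshold: for the low-density (DP) and vectorial cases the inequality holds only for $r<r_0(K)$, and for $r_0\le r<\dist(K,\de\Omega)$ one simply enlarges the constant $C$ using the uniform bound on $W^\square(u,1,x_0)$ over $x_0\in K$ (monotonicity plus compactness), so that the final estimate holds on the whole range with a $K$-dependent constant $C$.

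\textbf{Where the difficulty lies.} The differential-inequality mechanism itself is routine once one has \eqref{e:Weiss_monotonicity}; the genuine subtlety is ensuring that the epiperimetric inequality applies \emph{uniformly} — i.e.\ with an $\eps$ independent of $x_0\in K$ and of the scale $r$. This is exactly why Lemma~\ref{l:uniform_radius} is needed in the low-density vectorial case: without it, the blow-up datum could a priori have its positivity set filling up almost all of $\de B_1$, in which regime the constant $\alpha$ (or $\delta_0$) in the epiperimetric inequality would degenerate. The parallel issue in the scalar one-phase and double-phase cases is handled by non-degeneracy, which gives the lower bound on $\int_{\de B_1}c^\pm$ uniformly. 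A secondary technical point is the justification of passing from the finite-difference/integral form of Weiss monotonicity to the pointwise differential inequality and then integrating — this requires $r\mapsto W^\square(u,r,x_0)$ to be absolutely continuous, which is standard for Lipschitz minimizers (Lemmas~\ref{e:properties_of_min} and \ref{l:properties_of_min_vect}) via \eqref{e:Weiss_monotonicity}. I would also remark that the two vectorial sub-cases must be treated separately because they involve different constants $\Theta^V$ and different epiperimetric statements, but the resulting decay rate $\gamma$ is the same (determined only by the universal $\eps$ and $d=2$), which is what allows the uniform statement in the proposition.
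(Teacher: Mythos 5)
Your proposal follows essentially the same route as the paper: apply the epiperimetric inequality uniformly (via Lemma \ref{l:uniform_radius} for the low-density vectorial case and non-degeneracy for (OP)/(DP)), use minimality of $u_r$ against the competitor inside the Weiss identity \eqref{e:Weiss_monotonicity} to get the differential inequality, integrate it, and absorb the threshold radius into the constant by monotonicity. One small correction: $\gamma=\frac{2\eps}{1-\eps}$ is the \emph{stronger} exponent (not weaker) compared with $2\eps$, and it is your refined derivation $e'(r)\ge \frac{2\eps}{r(1-\eps)}\,e(r)$, keeping the $(1-\eps)$ factor, that matches both the statement and the paper's computation.
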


\begin{proof} Assume w.l.o.g. that $x_0=0$ and let us drop the $\square$. By Lemma \ref{l:uniform_radius} combined with Theorems \ref{p:epi_1}, \ref{p:epi_2} and \ref{p:epi_vec}, for each one of our functionals and every possible density there exists a radius $r_0>0$ such that we can apply the epiperimetric inequality in \eqref{e:Weiss_monotonicity}, to obtain
\begin{align}
\nonumber
\frac{d}{dr}&\left(W(u,r)-\Theta_u(0)\right)
=\frac{2}{r}\big[W^\square(z_r,1)-W(u_r,1)\big]+\frac{1}{r}\int_{\partial B_1}|x\cdot \nabla u_r-u_r|^2\,d\HH^{1}\\ \nonumber
&=\frac{2}{r}\left[W(z_r,1)-\Theta_u(0)-W(u_r,1)+\Theta_u(0)\right]+\frac{1}{r}\int_{\partial B_1}|x\cdot \nabla u_r-u_r|^2\,d\HH^{1}\\ 
&\geq \frac{2}{r}\frac{\eps}{1-\eps}\left(W(u_r,1)-\Theta_u(0)\right)\,+\frac{1}{r}\int_{\partial B_1}|x\cdot \nabla u_r-u_r|^2\,d\HH^{1}\geq \frac{\gamma}{r}\left(W(u_r,1)-\Theta_u(0)\right)\,,\label{tututu}
\end{align}
where we used the minimality of $u_r$ with respect to its boundary datum, the positivity of the last term in \eqref{e:Weiss_monotonicity} and one of the epiperimetric inequalities \eqref{e:epi_1}, \eqref{e:epi_13}, \eqref{e:epi_vec_low_density} or \eqref{e:epi_vec} depending on the density.
Integrating this differential inequality, we conclude that
$$
(W(u,r,x_0)-\Theta_u(x_0))\le C\, r^{\gamma}(W(u,1,x_0)-\Theta_u(x_0))
\qquad \forall 0<r<r_0\,.
$$ 
In order to conclude the proof it is enough to observe that for every $x_0\in \Gamma_\theta(u)\cap K\Subset B_1$
this decay can be derived by the same arguments with a constant $C>0$ which depends only on $W(u,1,x_0)-\Theta_u(x_0)>0$ (by monotonicity) and $\dist(K, \de B_1)$.
\end{proof}

\subsection{Uniqueness of the blow-up limit} Using the decay of $W(u,r, x_0)$ of the previous proposition we can now easily prove that the blow-up limit is unique at every free boundary point.

\begin{prop}\label{p:decay}
	Let $\Omega\subset\R^2$ be an open set and $u\in H^1(\Omega)$ a minimizer of $\mathcal E_\square$, where $\square=OP, DP$ or $V$. Then for every compact set $K\Subset\Omega$, there is a constant $C>0$ such that for every free boundary point $x_0\in \Gamma^\square_{\theta}(u)\cap K$, 
        the following decay holds
	\begin{equation}\label{e:L^2decay}
	\| u_{x_0,t}-u_{x_0,s} \|_{L^2(\partial B_1)} \leq C\, t^{\sfrac{\gamma}{2}}\qquad  \mbox{for all}\quad 0< s<t < \dist (K,\de \Omega)\,,
	\end{equation} 
where $\gamma$ is the exponent from Proposition \ref{p:improvement} and $\theta=\Theta_u^\square(x_0)$ is any of the $2$-dimensional densities $\Theta_u^{OP}(x_0)=\pi/2$, $\Theta_u^{DP}(x_0)=(\lambda_1+\lambda_2)\pi/2$ and $\Theta_u^V(x_0)=\pi/2\ \text{or}\ \pi$.
\end{prop}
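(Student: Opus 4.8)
The plan is to integrate the Weiss monotonicity identity \eqref{e:Weiss_monotonicity} against a scale-invariant weight and feed in the decay rate of Proposition \ref{p:improvement}. Assume $x_0=0$ and drop the index $\square$, writing $u_r:=u_{0,r}$. Since $u$ is Lipschitz (Lemmas \ref{e:properties_of_min}, \ref{l:properties_of_min_vect}), the curve $r\mapsto u_r\in L^2(\partial B_1)$ is locally absolutely continuous away from the origin, with derivative
$$
\partial_r u_r(\xi)=\frac1r\big(\xi\cdot\nabla u_r(\xi)-u_r(\xi)\big),\qquad\xi\in\partial B_1,
$$
so that $\|\partial_r u_r\|_{L^2(\partial B_1)}^2=r^{-2}\int_{\partial B_1}|\xi\cdot\nabla u_r-u_r|^2\,d\HH^1$. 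Substituting this in \eqref{e:Weiss_monotonicity} with $d=2$ and discarding the term $\tfrac2r\big[W(z_r,1)-W(u_r,1)\big]$, which is nonnegative by the minimality of $u_r$ among competitors sharing its boundary datum, I would obtain the differential inequality
$$
\frac{d}{dr}\big(W(u,r)-\Theta_u(0)\big)\ \ge\ r\,\|\partial_r u_r\|_{L^2(\partial B_1)}^2\qquad\text{for a.e.\ }r\in\big(0,\dist(K,\de\Omega)\big).
$$

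Next I would localize on dyadic annuli. Fixing $0<s<t<\dist(K,\de\Omega)$ and writing $t_k:=2^{-k}t$, Cauchy--Schwarz with the weight $r^{-1}\,dr$ (whose total mass on $[t_{k+1},t_k]$ is the scale-invariant constant $\log 2$), together with the monotonicity of $W(u,\cdot)$, gives
$$
\int_{t_{k+1}}^{t_k}\|\partial_r u_r\|_{L^2(\partial B_1)}\,dr\ \le\ (\log 2)^{1/2}\Big(\int_{t_{k+1}}^{t_k}r\,\|\partial_r u_r\|_{L^2(\partial B_1)}^2\,dr\Big)^{1/2}\ \le\ (\log 2)^{1/2}\big(W(u,t_k)-\Theta_u(0)\big)^{1/2}.
$$
Now Proposition \ref{p:improvement} bounds $W(u,t_k)-\Theta_u(0)\le C\,t_k^{\gamma}\big(W(u,1,0)-\Theta_u(0)\big)$, and the factor $W(u,1,x_0)-\Theta_u(x_0)$ is bounded uniformly for $x_0\in\Gamma_\theta(u)\cap K$ thanks to the Lipschitz estimate on $u$ (which makes $W(u,1,x_0)$ uniformly bounded on $K$); hence the annular integral is $\le C_K\,(2^{-k}t)^{\gamma/2}$ with $C_K$ depending only on $K$.

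Finally I would sum over the scales between $s$ and $t$: choosing $k_0$ with $t_{k_0+1}\le s\le t_{k_0}$, the fundamental theorem of calculus in $L^2(\partial B_1)$ and the triangle inequality give
$$
\|u_t-u_s\|_{L^2(\partial B_1)}\ \le\ \sum_{k=0}^{k_0}\int_{\max\{s,\,t_{k+1}\}}^{t_k}\|\partial_r u_r\|_{L^2(\partial B_1)}\,dr\ \le\ C_K\sum_{k=0}^{\infty}(2^{-k}t)^{\gamma/2}\ =\ \frac{C_K}{1-2^{-\gamma/2}}\,t^{\gamma/2},
$$
which is \eqref{e:L^2decay}. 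I do not expect a serious obstacle here: once Proposition \ref{p:improvement} is available this is the standard Weiss-type argument. The one point that needs care — and the reason for the dyadic decomposition rather than a single Cauchy--Schwarz over $[s,t]$ — is that $\int_s^t r^{-1}\,dr=\log(t/s)$ degenerates as $s\to0$, whereas the annular splitting replaces it by a convergent geometric series precisely because $\gamma>0$. The vectorial case $\square=V$ is handled identically, interpreting $|\cdot|$ as the Euclidean norm on $\R^n$ and using that the identities above hold componentwise.
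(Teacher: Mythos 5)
Your argument is correct and yields exactly \eqref{e:L^2decay}, but it realizes the standard Weiss-type estimate through a slightly different device than the paper. The paper works with the epiperimetric-improved inequality in the form \eqref{e:mon_rem}, namely $\frac{d}{dr}\big[r^{-\gamma}(W(u,r)-\Theta_u(0))\big]\ge r^{-1-\gamma}\int_{\partial B_1}|x\cdot\nabla u_r-u_r|^2\,d\HH^1$, and then performs a single Cauchy--Schwarz on all of $[s,t]$ with the splitting $r^{-1}=r^{(\gamma-1)/2}\,r^{-(1+\gamma)/2}$, so that the weighted dissipation integral is precisely the increment of $r^{-\gamma}(W(u,r)-\Theta_u(0))$, controlled by Proposition \ref{p:improvement}; this gives $\|u_t-u_s\|_{L^2(\partial B_1)}^2\le \frac{t^\gamma}{\gamma}\big(\frac{W(u,t)-\Theta_u(0)}{t^\gamma}-\frac{W(u,s)-\Theta_u(0)}{s^\gamma}\big)\le C\,t^\gamma/\gamma$ in one stroke. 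You instead retain only the plain monotonicity inequality $\frac{d}{dr}\big(W(u,r)-\Theta_u(0)\big)\ge r\,\|\partial_r u_r\|_{L^2(\partial B_1)}^2$ (no further use of the epiperimetric inequality inside this proof), apply Cauchy--Schwarz on each dyadic annulus against the scale-invariant weight $r^{-1}dr$, bound each annular contribution by $(W(u,t_k)-\Theta_u(0))^{1/2}\le C\,t_k^{\gamma/2}$ using Proposition \ref{p:improvement} together with $W(u,t_{k+1})\ge\Theta_u(0)$, and sum the geometric series; your closing remark correctly identifies why the dyadic splitting is needed in this version ($\int_s^t r^{-1}dr$ is not uniformly bounded). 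Both routes are legitimate and give the same rate with comparable constants ($1/\gamma$ versus $1/(1-2^{-\gamma/2})$); yours has the mild advantage of using Proposition \ref{p:improvement} purely as a black box, which makes it carry over verbatim to situations with error terms such as the almost-minimizers in the proof of Theorem \ref{c:Q_funct}, while the paper's weighted Cauchy--Schwarz is shorter. The one point you treat with the same looseness as the paper is the uniform bound on $W(u,1,x_0)-\Theta_u(x_0)$ for $x_0\in\Gamma^\square_\theta(u)\cap K$: strictly one should start the iteration at a scale $r_K<\dist(K,\partial\Omega)$ and invoke the interior Lipschitz estimate on a slightly larger compact set, which is exactly the remark made at the end of the proof of Proposition \ref{p:improvement}.
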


\begin{proof}
We are going to treat all the cases at once. Let us assume without loss of generality that $x_0=0$ and let us drop the $\square$. Notice that we can rewrite \eqref{tututu} as
	\begin{equation}\label{e:mon_rem}
    \frac{d}{dr}\left[\frac{W(u,r)-\Theta_u(0)}{r^\gamma}\right]\geq \frac{1}{r^{1+\gamma}}\int_{\partial B_1}|x\cdot \nabla u_r-u_r|^2\,d\HH^{d-1}.
	\end{equation}
Next let $0<s<t<r_0$ and compute
\begin{align}\label{e:imp_2}
\int_{\de B_1}\left| u_t-u_s \right|^2 \,d\,\HH^1
&\leq \int_{\de B_1}\left(\int_s^t\frac{1}{r}\left|  x\cdot \nabla u_r-u_r   \right| \,dr\right)^2\,d\HH^1 \notag\\
&\leq  \int_{\de B_1} \left( \int_s^t r^{-1+\gamma}\,dr\right)\left( \int_s^t r^{-1-\gamma} \left|  x\cdot \nabla u_r-u_r   \right|^2   \,dr\right)\,d\HH^1 \notag\\
&\leq \frac1\gamma(t^\gamma-s^\gamma) \int_s^t r^{-1-\gamma} \int_{\de B_1} \left|  x\cdot \nabla u_r-u_r   \right|^2 \,d\HH^1  \,dr \notag\\
&\stackrel{\eqref{e:mon_rem}}{\leq} \frac{t^\gamma}{\gamma} \int_s^t \frac{d}{dr} \left[\frac{W(u,r)-\Theta_u(0)}{r^\gamma}\right] \,dr\notag\\
&=  \frac{t^\gamma}\gamma\left(\frac{W(u,t)-\Theta_u(0)}{t^\gamma}-\frac{W(u,s)-\Theta_u(0)}{s^\gamma}\right)\le \frac{C t^\gamma}\gamma \,,
\end{align}
where in the last inequality we used the positivity of $W(u,s)-\Theta_u(0)$ and the estimate from Proposition \ref{p:improvement}. 
\end{proof} 

\noindent {\bf Proof of Theorem \ref{t:uniq_blowup}.} In the cases (OP) and (DP) the claim follows immediately from \eqref{e:L^2decay} and the classification of the $2$-dimensional blow-up limits due to Alt-Caffarelli and Alt-Caffarelli-Friedman (see \cite{AlCa,AlCaFr}). For the case (V), again the uniqueness part follows from \eqref{e:L^2decay}, while the classification of the blow-ups from Lemma \ref{l:class_bu_V}. It only remains to prove the last statement of the second bullet.
\noindent Suppose that the free-boundary point $x_0$ is the origin. Moreover assume that the blow-up is such that $e_1\neq e_2$ and that there exists a sequence $(x_k)_k\subset \de\{|u|>0\}$ with $x_k\to 0$. Let $r_k:=|x_k|$ and consider the rescaled functions $u_{r_k}$ and the sequence of points $y_k:=x_k/|x_k|$. By uniqueness of the blow-up $u_{r_k}\to h$ uniformly, and also $y_k\to y\in \mathbb{S}^1$, so that $h(y)=0$. However, since $y=y^1\,e_1+y^2\,e_2$, with at least one of $y_1$, $y_2$ not zero, it follows that $|h|(y)\geq |h_{e_1}|(y)+|h_{e_2}|(y)> 0$, a contradiction.\\
This implies that, in the non isolated points of the free boundary of density $\pi$ the unique tangent function is of the form $h:=\xi\,h_e $, for some $e\in \mathbb{S}^1$. 
\qed

\subsection{Regularity of the one-phase free boundary}\label{ss:one-phase} 
In this subsection we prove that the whole free boundary $\partial \{u>0\}$ is smooth when the scalar function $u$ is a solution of the one-phase functional $\mathcal E_{OP}$. 


\noindent {\bf Proof of Theorem \ref{t:main} (OP).} 
Notice that, by Proposition \ref{p:decay} and Theorem \ref{t:uniq_blowup}, for every $x_0\in \Omega\cap \partial \{u>0\}$, the unique blow-up of the rescaled functions $u_{x_0,r}$ has the form
$$ u_{x_0,0}(x)=:h_{e(x_0)}(x)= \max\{0, x\cdot e(x_0)\},\quad\text{for every}\quad x\in \R^2,$$
where $e(x_0)\in \mathbb{S}^1$. In particular, we have that $\Omega\cap \partial \{u>0\}=\Gamma_{\pi/2}(u)$.

 
 We claim that 
$$\mbox{the function}\quad B_s\cap \de \{|u|>0\} \ni x \mapsto e(x) \in \mathbb{S}^1\quad \mbox{is H\"older continuous.}$$
To see this, let $r:=|x_0-y_0|^{1-\alpha}$, with $\alpha:=\sfrac{\gamma}{(2+\gamma)}$, where $\gamma$ is as in Proposition \ref{p:improvement}. Notice that the Lipschitz continuity of $u$ (see Section \ref{ss:preliminaries}) implies that for every $x\in\partial B_1$ we have
\begin{align*}
\left| u^{x_0}_r(x)-u^{y_0}_r(x) \right|
&\leq r^{-1} \,\int_0^1 \left| (x_0-y_0)\cdot\nabla u\big( s(x_0+rx)+(1-s)(y_0+rx) \big) \right|\,ds\notag\\
&\leq \|\nabla u\|_{L^\infty}\,r^{-1}\,\left|x_0-y_0\right|\leq \|\nabla u\|_{L^\infty}\, \left|x_0-y_0\right|^\alpha\,,
\end{align*}
and so, integrating on $\partial B_1$ and setting $L$ to be the Lipschitz constant of $u$, $L=\|\nabla u\|_{L^\infty}$, we get 
\begin{equation}\label{e:from_lip}
\|u^{x_0}_r-u^{y_0}_r\|_{L^2(\partial B_1)}\le L\, \left|x_0-y_0\right|^\alpha\,.
\end{equation}
On the other hand, it is easy to see that for every pair of vectors $v_1,v_2\in\R^2$ we have
\begin{align}
|v_1-v_2|&=\left(\frac1\pi\int_{\partial B_1}|v_1\cdot x-v_2\cdot x|^2\,dx\right)^{\sfrac12}\notag\\
&\le\left(\frac1\pi\int_{\partial B_1}|(v_1\cdot x)_+-(v_2\cdot x)_+|^2\,dx\right)^{\sfrac12}+\left(\frac1\pi\int_{\partial B_1}|(v_1\cdot x)_--(v_2\cdot x)_-|^2\,dx\right)^{\sfrac12}\notag\\
&=2\left(\frac1\pi\int_{\partial B_1}|(v_1\cdot x)_+-(v_2\cdot x)_+|^2\,dx\right)^{\sfrac12},\label{e:shakaponk}
\end{align}
which gives that 
\begin{equation}\label{e:from_bu}
\left|e(x_0)-e(y_0)\right|\leq \frac2{\sqrt\pi}\|h_{e(x_0)}-h_{e(y_0)}\|_{L^2(\partial B_1)}\,.
\end{equation}
Combining \eqref{e:from_lip}, \eqref{e:from_bu} and \eqref{e:L^2decay} with a triangular inequality, we get
\begin{align}\label{e:normal_hoelder}
\left|e(x_0)-e(y_0)\right|
&\leq 2\, \|h_{e(x_0)}-h_{e(y_0)}\|_{L^2(\partial B_1)}\notag\\
&\leq 2\left(\|u_r^{x_0}-h_{e(x_0)}\|_{L^2(\partial B_1)}+\|u_r^{x_0}-u_r^{y_0}\|_{L^2(\partial B_1)}+\|u_r^{y_0}-h_{e(y_0)}\|_{L^2(\partial B_1)}\right)\notag\\
&\leq 2\,\left(  L\left|x_0-y_0\right|^\alpha+2C r^{\sfrac\gamma2}  \right)=(2L+4C)\,  \left|x_0-y_0\right|^{\alpha}\,.
\end{align}

\noindent Next, for every $x_0\in \Gamma_{\sfrac\pi2}(u)$ and any $\eps>0$, we introduce the cones
$$C^\pm(x_0,\eps):=\{ x\in \R^2\setminus \{0\}\,:\, \pm(x-x_0)\cdot e(x_0)\geq \eps \left|x-x_0\right| \}\,,$$
and we claim that for every $\eps>0$ there exists $\delta>0$ such that for every $x_0\in \Gamma_{\sfrac\pi2}(u)\cap B_{\sfrac s2}$ the following holds:
\begin{equation}\label{e:eisnormal}
\begin{cases}
|u|>0 &\mbox{on }C^+(x_0,\eps)\cap B_\delta(x_0)\\
|u|=0&\mbox{on }C^-(x_0,\eps)\cap B_\delta(x_0)\,,
\end{cases}
\end{equation}
from which the theorem immediately follows as in \cite[Proposition 4.10]{FoSp}. To prove the claim we assume by contradiction that there exists $x_j\in \Gamma_{\sfrac\pi2}\cap B_{\sfrac s2} $ with $x_j\to x_0$ and $y_j\in C^+(x_j,\eps)$ with $|y_j-x_j|\to 0$ such that $|u(y_j)|= 0$. Consider the rescalings $u_j:=u_{x_j,r_j}$, where $r_j:=|x_j-y_j|$, then by the $C^{0,1}$-regularity of $u$
(see (i) Lemma~\ref{l:properties_of_min_vect}) and the fact that we are rescaling geometrically, we deduce that, up to a subsequence, the $u_j$ converges uniformly to $u_0:=u_{x_0,0}=h_{e(x_0)}$. By the H\"older continuity of $e$, we can assume that $\ds\frac{(y_j-x_j)}{r_j}\to z \in C^+(x_0,\eps)\cap \mathbb{S}^1$, which implies that 
$$|u_0|(z)=|h_{e(x_0)}|(z)=|
\max\{0, e(x_0)\cdot z\}|\geq \eps \,|z|=\eps>0
\,.$$
On the other hand, by the uniform convergence of $u_j$ we also have $|u_0|(z)= 0$, which is a contradiction.\qed

\subsection{Regularity of the free boundary of vector-valued minimizers}
This subsection is dedicated to the proof of Theorem \ref{t:main} (V). The argument is precisely the same as in the scalar case, except for the fact that (see Theorem \ref{t:uniq_blowup}) the possible densities at the boundary points are two: $\pi/2$, where the free boundary is smooth and behaves precisely as the free boundary of a scalar one-phase solution, and $\pi$, where the behavior is of double-phase type or cusps may be formed (see Example \ref{ex:cusp_sing}). 

\noindent {\bf Proof of Theorem \ref{t:main} (V).} 
Let $u:\Omega\to\R^n$ be a minimizer of $\mathcal E_{V}$ in the open set $\Omega\subset\R^2$. We recall that the free boundary $\Omega\cap\partial\{|u|>0\}$ can be subdivided into two disjoint sets: 
$$\Omega\cap\partial\{|u|>0\}=\Gamma_{\pi/2}\cup\Gamma_\pi,$$
where 
$$\Gamma_\theta=\Big\{x_0\in \partial\{|u|>0\}\ :\ \Theta_u^V(x_0):=\lim_{r\to0^+}W^V(u,r,x_0)=\theta\Big\}.$$
We first notice that {\it $\Gamma_{\pi/2}$ is an open subset of the free boundary.} 
Since $r\to W^V(u,r,x_0)$ is non-decreasing and $x_0\to W^V(u,r,x_0)$ is continuous, we get that $\Theta_u^V:\Omega\cap\partial\{|u|>0\}\to \R$ is upper semi-continuous and thus $\Gamma_{\pi/2}$ is an open subset of the free boundary. By the uniqueness of the blow-up limits and the Hausdorff convergence of the blow-up sequences (see \cite{MaTeVe}) we get that $\Gamma_{\pi/2}$ coincides precisely with the measure theoretic reduced boundary 
$\partial_{red}\{|u|>0\}$. 

Let $x_0\in\Gamma_{\pi/2}$ and $r_0>0$ be such that $\text{dist}(x_0,\partial\Omega)>r_0$ and $\text{dist}(x_0,\Gamma_{\pi})>r_0$. By Proposition \ref{p:decay} we have that for some constant $C$
\begin{equation}\label{e:L^2decay_1}
	\| u_{x,t}-u_{x,s} \|_{L^2(\partial B_1)} \leq C\, t^{\sfrac{\gamma}{2}}\qquad  \mbox{for all}\quad 0< s<t < \dist (K,\de \Omega)\,,
	\end{equation} 
for every $x\in B_{r_0}(x_0)$. We are now going to prove that {\it the free boundary $\partial\{|u|>0\}$ is $C^{1,\alpha}$ regular in $B_{r_0}(x_0)$. }
Let $x_1,x_2\in B_{r_0}(x_0)$ and let $\xi_1h_{e_1}$ and $\xi_2h_{e_2}$ be the blow-up limits in $x_1$ and $x_2$, where $h_e(x)=\max\{0,x\cdot e\}$, $e_1,e_2\in\partial B_1\subset\R^2$ and $\xi_1,\xi_2\in \partial B_1\subset\R^n$. By \eqref{e:from_bu} we get that 
$$\left|e_1-e_2\right|\leq \frac2{\sqrt\pi}\|h_{e_1}-h_{e_2}\|_{L^2(\partial B_1)}\le \frac2{\sqrt\pi}\|\xi_1h_{e_1}-\xi_2h_{e_2}\|_{L^2(\partial B_1)}.$$
Now reasoning as in \eqref{e:normal_hoelder} we get that 
$$\left|e_1-e_2\right|\le C_0|x_1-x_2|^\alpha,$$
where $C_0$ is a constant depending only on $x_0$ and $r_0$ and $\alpha=\gamma/(2+\gamma)$. Now, by the same argument as in Subsection \ref{ss:one-phase}, $\partial_{red}\{|u|>0\}$ is locally a graph of a $C^{1,\alpha}$ function in $B_{r_0}(x_0)$.\qed

\subsection{Regularity of the free boundary for the double-phase problem}
In this subsection we prove Theorem \ref{t:main} (DP). We are going to show that the normal to the double-phase boundary is $C^{0,\alpha}$, which will imply that the positive and the negative parts of the solution of the double-phase problem are actualy solutions of the classical one-phase free boundary problem in its viscosity formulation (we notice that at this point we will have to apply some result from the classical theory and not Theorem \ref{t:main} (OP) which applies only to variational solutions). 

Let $u\in H^1(\Omega)$ be a local minimizer of the functional $\mathcal E_{DP}$ and suppose that $u$ changes sign in the open set $\Omega\subset\R^2$. We decompose the free boundary $\partial\{u\neq0\}$ as follows: 
$$\partial\{u\neq 0\}=\Gamma_{DP}\cup\Gamma_+\cup\Gamma_-,$$
where $\Gamma_{DP}=\partial\{u>0\}\cap\partial\{u<0\}$, $\Gamma_{+}=\partial\{u>0\}\setminus\partial\{u<0\}$ and $\Gamma_{-}=\partial\{u<0\}\setminus\partial\{u>0\}$.
By the classification of the blow-up limits we have that 
$$\Theta^{DP}_u(x_0):=\lim_{r\to0}W^{DP}(u,x_0,r)=\begin{cases}
(\lambda_1+\lambda_2)\pi/2\quad\text{if}\quad x_0\in\Gamma_{DP},\\
\lambda_1\pi/2\quad\text{if}\quad x_0\in\Gamma_{+},\\
\lambda_2\pi/2\quad\text{if}\quad x_0\in\Gamma_{-}.
\end{cases}$$
Since the function $W^{DP}(u,x,r)$ is continuous in $x$ and monotone in $r$ we have that $\Theta^{DP}_u$ is upper semi-continuos and so $\Gamma_{DP}$ is a closed subset of $\partial\{u\neq 0\}$. As a consequence $\Gamma_+$ and $\Gamma_-$ are open and disjoint. In particular, they are locally the free-boundaries of the solutions $u_+$ and $u_-$ of a one-phase problem. Thus, they are both smooth. We now concentrate our attention at the double-phase boundary $\Gamma_{DP}$. 
\begin{lemma}
Suppose that $u:\Omega\to\R$ is a local minimizer of $\mathcal E_{DP}$ in the open set $\Omega\subset\R^2$ and let $\Gamma_{DP}=\partial\{u>0\}\cap\partial\{u<0\}$. Let $r_0>0$ and $\Omega_{r_0}=\big\{x\in\Omega\ :\ \text{dist}(x,\partial\Omega)>r_0\big\}$. Then, there is a constant $C_0>0$ such that, for every $x_0,y_0\in\Gamma_{DP}\cap \Omega_{r_0}$, we have
\begin{equation}\label{e:altrimenti?}
|\mu_1(x_0)-\mu_1(y_0)|+ |\mu_2(x_0)-\mu_2(y_0)|\le C_0|x_0-y_0|^\alpha\quad\text{and}\quad|e(x_0)-e(y_0)|\le C_0|x_0-y_0|^\alpha,
\end{equation}
where $e(x_0)$, $e(y_0)$ the normal vectors to the free boundary in $x_0$ and $y_0$ and the constants $\mu_1(x_0),\mu_2(x_0),\mu_1(y_0),\mu_2(y_0)$ are determined by the blow-up limits $u_{x_0}$, $u_{y_0}$ of $u$ in $x_0$ and $y_0$, precisely 
$$u_{x_0}(x)=\mu_1(x_0)\max\{0,e(x_0)\cdot x\}+\mu_2(x_0)\min\{0,e(x_0)\cdot x\},$$ 
$$u_{y_0}(x)=\mu_1(y_0)\max\{0,e(y_0)\cdot x\}+\mu_2(y_0)\min\{0,e(y_0)\cdot x\}.$$
In particular, $\Gamma_{DP}$ is locally a closed subset of the graph of a $C^{1,\alpha}$ function.
\end{lemma}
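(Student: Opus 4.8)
The plan is to run, for the double-phase boundary $\Gamma_{DP}$, the same three-step argument used in Subsection~\ref{ss:one-phase} for the one-phase case, the only genuinely new feature being that the blow-up at a point of $\Gamma_{DP}$ now carries two free amplitudes $\mu_1,\mu_2$ besides the direction $e$. First, since $\Gamma_{DP}\subset\partial\{u>0\}\cap\partial\{u<0\}=\partial\{u^+>0\}\cap\partial\{u^->0\}$, the non-degeneracy in (ii) of Lemma~\ref{e:properties_of_min} gives $\int_{\partial B_1}(u_{x_0,r})^\pm\ge\alpha$ for every $x_0\in\Gamma_{DP}\cap\Omega_{r_0}$ and every sufficiently small $r$; hence the epiperimetric inequality of Theorem~\ref{p:epi_2} applies along the whole orbit $r\mapsto u_{x_0,r}$, so Propositions~\ref{p:improvement} and~\ref{p:decay} apply at every point of $\Gamma_{DP}$ with constants uniform on $\Omega_{r_0}$. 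In particular, letting $s\to0$ in \eqref{e:L^2decay}, we get $\|u_{x_0,r}-u_{x_0,0}\|_{L^2(\partial B_1)}\le C\,r^{\gamma/2}$, where $u_{x_0,0}$ is the unique blow-up from Theorem~\ref{t:uniq_blowup}.

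Second, exactly as in \eqref{e:from_lip}, the Lipschitz bound $L:=\|\nabla u\|_{L^\infty(\Omega_{r_0})}$ yields $\|u_{x_0,r}-u_{y_0,r}\|_{L^2(\partial B_1)}\le L\,r^{-1}|x_0-y_0|$, and choosing $r:=|x_0-y_0|^{1-\alpha}$ with $\alpha:=\gamma/(2+\gamma)$ (so that $r<r_0$ once $|x_0-y_0|$ is small) balances the two error terms; a triangle inequality then gives
\[
\|u_{x_0,0}-u_{y_0,0}\|_{L^2(\partial B_1)}\le(2C+L)\,|x_0-y_0|^\alpha,
\]
so the two blow-up half-plane solutions at $x_0$ and $y_0$ are $\alpha$-H\"older close in $L^2(\partial B_1)$.

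Third --- and this is the main point, where the argument departs from the one-phase case --- one has to convert this $L^2$-closeness into closeness of the parameters. Write $h_p(\theta):=\mu_1\max\{0,e\cdot\theta\}+\mu_2\min\{0,e\cdot\theta\}$ for $p=(\mu_1,\mu_2,e)$. By Theorem~\ref{t:uniq_blowup} the parameters at each point of $\Gamma_{DP}$ satisfy $\mu_1\ge\lambda_1$, $\mu_2\ge\lambda_2$; moreover $\max\{\mu_1,\mu_2\}=\|\nabla h_p\|_{L^\infty}\le L$, since $h_p$ is a locally uniform limit of the $L$-Lipschitz rescalings of $u$. Hence all relevant parameters lie in the compact set $\mathcal P:=[\lambda_1,L]\times[\lambda_2,L]\times\mathbb{S}^1$, on which $p\mapsto h_p$ is a $C^1$ map into $L^2(\partial B_1)$. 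It is injective: equality of traces forces $\{h_p>0\}=\{h_q>0\}$ on $\partial B_1$, hence $e=e'$, and then $\mu_1=\mu_1'$, $\mu_2=\mu_2'$ by comparing the two halves. A short computation shows its differential is injective at every $p\in\mathcal P$ (the three variations $\max\{0,e\cdot\theta\}$, $\min\{0,e\cdot\theta\}$ and $(\mu_1\mathbf{1}_{\{e\cdot\theta>0\}}+\mu_2\mathbf{1}_{\{e\cdot\theta<0\}})(\dot e\cdot\theta)$ are linearly independent in $L^2(\partial B_1)$ as soon as $\mu_1,\mu_2>0$, because $e\cdot\theta$ and $\dot e\cdot\theta$ are linearly independent on each half). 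By compactness this upgrades to a two-sided bound
\[
c_0\Big(|\mu_1(x_0)-\mu_1(y_0)|+|\mu_2(x_0)-\mu_2(y_0)|+|e(x_0)-e(y_0)|\Big)\le\|h_{p(x_0)}-h_{p(y_0)}\|_{L^2(\partial B_1)},
\]
which together with the second step gives the first inequality in \eqref{e:altrimenti?}. (Equivalently, one may argue directly: project onto the positive and negative halves to bound $|e(x_0)-e(y_0)|$ as in \eqref{e:shakaponk}--\eqref{e:from_bu}, then read off the amplitude differences from the $L^2$ norms of the positive and negative parts.)

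Finally, the $\alpha$-H\"older continuity of $x\mapsto e(x)$ on $\Gamma_{DP}$, together with the blow-up description and the non-degeneracy of $u^\pm$, implies by the cone argument of \eqref{e:eisnormal} (cf.\ \cite[Proposition~4.10]{FoSp}) that near each of its points $\Gamma_{DP}$ is trapped between two Lipschitz graphs with the same H\"older-continuous tangent, hence is contained in the graph of a $C^{1,\alpha}$ function; that $\Gamma_{DP}$ is closed has already been observed. The hard part is the coercivity estimate of the third step: it is precisely what forces us to exploit the a priori bounds $\lambda_i\le\mu_i\le L$ (from Theorem~\ref{t:uniq_blowup} and the Lipschitz regularity of $u$) in order to compactify the parameter space and make the parametrization $p\mapsto h_p$ non-degenerate.
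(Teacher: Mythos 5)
Your proof is correct, and its first two steps (the decay of Proposition \ref{p:decay} at double-phase points, the Lipschitz bound as in \eqref{e:from_lip}, and the triangle inequality at the scale $r=|x_0-y_0|^{1-\alpha}$ with $\alpha=\gamma/(2+\gamma)$) are exactly the paper's argument; the difference is in how you pass from closeness of the blow-ups to closeness of the parameters $(\mu_1,\mu_2,e)$. You do this by a soft compactness argument: the parametrization $p\mapsto h_p$ is injective with injective differential on the compact set $[\lambda_1,L]\times[\lambda_2,L]\times\mathbb{S}^1$, hence bi-Lipschitz onto its image in $L^2(\partial B_1)$. This works (the bounds $\lambda_i\le\mu_i\le L$ you invoke are precisely what makes the parameter space compact and the differential non-degenerate), but it yields a non-explicit constant. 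The paper instead argues in one line: since $\mu_1\max\{0,e\cdot x\}=\max\{0,(\mu_1 e)\cdot x\}$, the elementary inequality \eqref{e:shakaponk} applied to the positive parts gives $|\mu_1(x_0)e(x_0)-\mu_1(y_0)e(y_0)|\le C_0|x_0-y_0|^\alpha$ directly, and then the bounds $\lambda_1\le\mu_1\le L_0$ split this into the estimates for $|\mu_1(x_0)-\mu_1(y_0)|$ and $|e(x_0)-e(y_0)|$ (and analogously for $\mu_2$) — this is exactly the direct alternative you sketch in your parenthetical remark, and it is shorter and quantitative. One further, minor divergence: the paper first upgrades the $L^2$ decay to the $L^\infty$ bound \eqref{e:convunif} via the Lipschitz continuity of $u$; this is not needed for the present H\"older estimate (your $L^2$ version suffices), but it is the form used immediately afterwards in Remark \ref{the_gradient_up_to_the_boundary} and Lemma \ref{l:visc}. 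The concluding graph statement is handled by both of you the same way, by the cone argument of the (OP) case.
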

\begin{proof}
We first notice that by Proposition \ref{p:decay} there is a constant $C_0$, depending on $r_0$, such that 
$$\|u_{r,x_0}-u_{x_0}\|_{L^2(\partial B_1)}\le C_0 r^\gamma\quad\text{for every}\quad x_0\in \Gamma_{DP}\cap \Omega_{r_0}\quad\text{and}\quad 0<r<r_0.$$
Now the Lipschitz continuity of $u$ gives that there is a constant (still denoted by $C_0$) such that 
\begin{equation}\label{e:convunif}
\|u_{r,x_0}-u_{x_0}\|_{L^\infty(B_1)}\le C_0 r^\gamma\quad\text{for every}\quad x_0\in \Gamma_{DP}\cap \Omega_{r_0}\quad\text{and}\quad 0<r<r_0.
\end{equation}
Now setting $r:=|x_0-y_0|^{1-\alpha}$ and $\alpha:=\sfrac{\gamma}{(2+\gamma)}$, and reasoning as in \eqref{e:normal_hoelder} we get
\begin{align*}
\|u_{x_0}-u_{y_0}\|_{L^\infty(B_1)}
&\leq \|u_{r,x_0}-u_{x_0}\|_{L^\infty( B_1)}+\|u_{r,x_0}-u_{r,y_0}\|_{L^\infty( B_1)}+\|u_{r,x_0}-u_{x_0}\|_{L^\infty( B_1)}\\
&\leq \left( C_0 r^{\sfrac\gamma2} + \frac{L_0}r{\left|x_0-y_0\right|}+C_0 r^{\sfrac\gamma2}  \right)=(L_0+2C_0)\,  \left|x_0-y_0\right|^{\alpha}\,,
\end{align*}
where $L_0$ is the Lipschitz constant of $u$ in $\Omega_{r_0}$. 
Now using the fact that 
$$\|u_{x_0}^+-u_{y_0}^+\|_{L^\infty(B_1)}\le \|u_{x_0}-u_{y_0}\|_{L^\infty(B_1)},$$
and the inequality \eqref{e:shakaponk} we get that 
$$|\mu_1(x_0)e(x_0)-\mu_1(y_0)e(y_0)|\le C_0 |x_0-y_0|^\alpha.$$
Using the fact that $\lambda_1\le \mu_1(x_0),\mu_1(y_0)\le L_0$, we get that for some constant $C_0$ 
$$|\mu_1(x_0)-\mu_1(y_0)|\le C_0 |x_0-y_0|^\alpha\quad\text{and}\quad |e(x_0)-e(y_0)|\le C_0 |x_0-y_0|^\alpha,$$
which concludes the proof of \eqref{e:altrimenti?}, the argument for $\mu_2$ being analogous. The last claim follows by the same argument as in the proof of Theorem \ref{t:main} (OP). 
\end{proof}

\begin{oss}\label{the_gradient_up_to_the_boundary}
By \eqref{e:convunif} we have that, if $x_0\in \Omega_{r_0}\cap\Gamma_{DP}$, then 
\begin{equation}\label{e:pestorosso}
\big|u(x)-u(x_0)-\mu_1(x_0)(x-x_0)\cdot e(x_0)\big|\le C_0|x-x_0|^{1+\gamma}\quad \text{for every}\quad x\in B_{r_0}(x_0)\cap \{u>0\}.
\end{equation}
In particular, $u$ is differentiable on $\{u>0\}$ up to $x_0$ and $|\nabla u(x_0)|=\mu_1(x_0)$. The analogous result holds on the boundary $\Gamma_+$. In fact, if  $x_0\in \Omega\cap\Gamma_{+}$, then there is some $r_0>0$ such that 
$$\big|u(x)-u(x_0)-\lambda_1(x-x_0)\cdot e(x_0)\big|\le C_0|x-x_0|^{1+\gamma}\quad \text{for every}\quad x\in B_{r_0}(x_0)\cap \{u>0\}.$$
\end{oss}

\begin{lemma}\label{l:visc}
Suppose that $u:\Omega\to\R$ is a local minimizer of $\mathcal E^{DP}$ in the open set $\Omega\subset\R^2$. Then there is a $C^{0,\alpha}$ continuous function $\mu_1:\partial\{u>0\}\cap\Omega\to\R$ such that $\mu_1\ge \lambda_1$ and $u_+$ is a solution of the one-phase problem
$$\Delta u_+=0\quad\text{in}\quad\{u>0\}\,,\qquad |\nabla u_+|=\mu_1\quad\text{on}\quad \partial\{u>0\}\cap\Omega\,,$$
that is, for every $x_0\in \partial \{u>0\}$, there is a unit vector  $e(x_0)\in\partial B_1\subset\R^2$ such that 
$$u_+(x)=\mu_1(x_0)(x-x_0)\cdot e(x_0)+o(|x-x_0|)\quad \text{for every}\quad x\in \{u>0\}.$$
\end{lemma}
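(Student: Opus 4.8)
The claim is that $u_+$ is a viscosity solution of the one-phase problem with the $C^{0,\alpha}$-gradient boundary condition $|\nabla u_+|=\mu_1$ on $\de\{u>0\}\cap\Omega$. The work has essentially been done: Remark \ref{the_gradient_up_to_the_boundary} gives the required pointwise expansion at every boundary point, and the previous lemma gives the $C^{0,\alpha}$ dependence of $\mu_1$. So the plan is to assemble these two ingredients, taking care of the two different regimes of boundary points.

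\begin{proof}
Recall that $\de\{u>0\}\cap\Omega=\Gamma_{DP}\cup\Gamma_+$, and that $\Gamma_{DP}$ is relatively closed in $\de\{u\neq0\}$ while $\Gamma_+$ is relatively open. On $\Gamma_+$ we set $\mu_1:=\lambda_1$, which is trivially H\"older continuous, and on $\Gamma_{DP}$ we let $\mu_1$ be the function constructed in the previous lemma, which satisfies $\mu_1\ge\lambda_1$ and the estimate \eqref{e:altrimenti?}. To see that $\mu_1$ is H\"older continuous on all of $\de\{u>0\}\cap\Omega$ it suffices to check it across the two pieces: if $x_0\in\Gamma_{DP}$ and $y_0\in\Gamma_+$, then $\mu_1(y_0)=\lambda_1$ and, by \eqref{e:convunif} applied at $x_0$ together with the Lipschitz bound, for $r:=|x_0-y_0|^{1-\alpha}$ the rescaling $u_{r,y_0}$ is $C_0r^{\sfrac\gamma2}$-close in $L^\infty(B_1)$ to a blow-up at $y_0$, hence (using \eqref{e:shakaponk} on the positive parts) $|\mu_1(x_0)-\lambda_1|\le C_0|x_0-y_0|^\alpha$; combined with the two intra-piece estimates and a triangle inequality this gives the global $C^{0,\alpha}$ bound for $\mu_1$.

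Next, the interior equation is immediate: since $u$ is a minimizer of $\mathcal E_{DP}$, it is harmonic on $\{u>0\}$ (it minimizes the Dirichlet energy among competitors with the same sign), so $\Delta u_+=0$ in $\{u>0\}$. For the boundary condition, fix $x_0\in\de\{u>0\}\cap\Omega$. If $x_0\in\Gamma_{DP}$, then Remark \ref{the_gradient_up_to_the_boundary}, estimate \eqref{e:pestorosso}, gives
\[
\big|u(x)-u(x_0)-\mu_1(x_0)(x-x_0)\cdot e(x_0)\big|\le C_0|x-x_0|^{1+\gamma}\quad\text{for }x\in B_{r_0}(x_0)\cap\{u>0\},
\]
and since $u(x_0)=0$ and $u=u_+$ on $\{u>0\}$, this is exactly $u_+(x)=\mu_1(x_0)(x-x_0)\cdot e(x_0)+o(|x-x_0|)$ with a modulus $o(t)\le C_0t^{1+\gamma}$. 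If instead $x_0\in\Gamma_+$, then by the second displayed estimate in Remark \ref{the_gradient_up_to_the_boundary} the same expansion holds with $\mu_1(x_0)=\lambda_1$. In either case the unit vector is the normal $e(x_0)$ to the free boundary at $x_0$, whose existence and $C^{0,\alpha}$ dependence on $x_0$ was established in the previous lemma (on $\Gamma_{DP}$) and in Theorem \ref{t:main}~(OP) / Theorem \ref{t:uniq_blowup}~(DP) (on $\Gamma_+$, where $u$ solves a one-phase problem).
\end{proof}

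The only point requiring a little care is the matching of the constant $\mu_1$ across $\Gamma_{DP}$ and $\Gamma_+$; everything else is a direct reading-off of the expansions already recorded in the remark and the H\"older estimate of the preceding lemma. I expect no genuine obstacle here, since the statement is really a repackaging of results already proved in this subsection into the viscosity formulation needed to invoke the classical regularity theory afterwards.
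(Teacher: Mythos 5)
There is a genuine gap, and it sits exactly at the one nontrivial point of the lemma: the matching of $\mu_1$ across the junction between $\Gamma_{DP}$ and $\Gamma_+$. Your chain of estimates needs three legs: (a) $u_{r,x_0}$ close to the blow-up at $x_0\in\Gamma_{DP}$ (this is \eqref{e:convunif}), (b) $u_{r,x_0}$ close to $u_{r,y_0}$ (Lipschitz bound, as in \eqref{e:from_lip}), and (c) $u_{r,y_0}$ close to its blow-up $\lambda_1 h_{e(y_0)}$ at the one-phase point $y_0\in\Gamma_+$, \emph{with a constant and a rate uniform as $y_0$ approaches $\Gamma_{DP}$}. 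Leg (c) is not available: Propositions \ref{p:improvement} and \ref{p:decay} are proved for the double-phase functional only at points of density $(\lambda_1+\lambda_2)\pi/2$, and at points of $\Gamma_+$ the relevant epiperimetric inequality is the one-phase one (Theorem \ref{p:epi_1}), which can be fed into the monotonicity formula only at scales below roughly $\dist(y_0,\{u<0\})$; that starting radius, and hence the constant in the decay, degenerates precisely in the regime $|x_0-y_0|\to 0$ that you need. Writing ``by \eqref{e:convunif} applied at $x_0$ \dots the rescaling $u_{r,y_0}$ is close to a blow-up at $y_0$'' conflates (a)+(b) with the missing (c): an estimate at $x_0$ gives no information about the rate of convergence at $y_0$. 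Note also that at the scale $r=|x_0-y_0|^{1-\alpha}\gg|x_0-y_0|$ the ball $B_r(y_0)$ contains double-phase boundary, so $u_{r,y_0}$ is in fact close to a two-plane profile, not to $\lambda_1 h_{e(y_0)}$; nothing in the available estimates lets you bridge the scales from $r$ down to $0$ at $y_0$ uniformly. A priori it is perfectly consistent with all the estimates you quote that $\mu_1(x_0)>\lambda_1$ at a point $x_0\in\Gamma_{DP}$ which is a limit of one-phase points, and ruling this out is the actual content of the lemma.

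The paper closes this gap by a qualitative compactness argument rather than a quantitative cross-junction bound: assuming $\mu_1(x_0)>\lambda_1$ with $x_n\in\Gamma_+$, $x_n\to x_0$, it rescales $u_+$ at $x_n$ by $r_n=\dist(x_n,\Gamma_{DP})$, so that each rescaling is a (viscosity) solution of the one-phase problem $\Delta u_n=0$ in $\{u_n>0\}$, $|\nabla u_n|=\lambda_1$ on $\partial\{u_n>0\}\cap B_1$; by the uniform Lipschitz bound the $u_n$ converge to a viscosity solution of the same problem (stability in the sense of \cite{desilva}), while the expansion \eqref{e:pestorosso} forces the limit to be $\mu_1(x_0)\max\{0,x\cdot e(x_0)\}$, whence $\mu_1(x_0)=\lambda_1$. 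Your treatment of the remaining points (harmonicity in $\{u>0\}$, the expansions from Remark \ref{the_gradient_up_to_the_boundary}, the H\"older bound \eqref{e:altrimenti?} on $\Gamma_{DP}$ and constancy on $\Gamma_+$) coincides with the paper's; but the junction step needs either the compactness/viscosity argument above or some genuinely new uniform estimate at low-density points, neither of which your proposal supplies.
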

\begin{proof}
The existence of a function $\mu_1$ is given by Remark \ref{the_gradient_up_to_the_boundary}. The only point to prove is the $C^{0,\alpha}$ continuity of $\mu_1$. Since $\mu_1$ is H\"older continuous on $\Gamma_{DP}$ and constant on $\Gamma_+$, we just need to show that if $x_0\in\Gamma_{DP}$ is such that there is a sequence  $x_n\in \Gamma_+$ converging to $x_0$ , then $\mu_1(x_0)=\lambda_1$.  Suppose that this is not the case and that $\mu_1(x_0)>\lambda_1$. Let $y_n$ be the projection of $x_n$ on the closed set $\Gamma_{DP}$. 
Setting $r_n=|x_n-y_n|$ and $u_n(x)=\frac1{r_n}u_+(x_n+r_nx)$ we have that $u_n$ is a solution of the free boundary problem 
$$\Delta u_n=0\quad\text{in}\quad\{u_n>0\}\,,\qquad |\nabla u_n|=\lambda_1\quad\text{on}\quad \partial\{u_n>0\}\cap B_1\,.$$
Since $u_n$ are uniformly Lipschitz they converge to a function $u_\infty$ which is also a viscosity solution (see \cite{desilva}) of the same problem. On the other hand, by \eqref{e:pestorosso}, we have that $u_\infty=\mu_1(x_0)\max\{0,x\cdot e(x_0)\}$, which gives that necessarily $\mu_1(x_0)=\lambda_1$.
\end{proof}

\noindent {\bf Proof of Theorem \ref{t:main} (DP).} The proof follows by Lemma \ref{l:visc} and the regularity result for the one-phase problem (see, for example \cite{desilva}).\qed 
 
\subsection{An example of a non-smooth free boundary}
As stated in Example~\ref{ex:cusp_sing}, in this subsection we will show that there exists a local minimizer $u:\R^2\to\R^2$ of the functional $\mathcal E_V$ for which 
\begin{enumerate}[(1)]
\item $\Omega_u=\{|u|>0\}$ is a connencted open set; 
\item there is a point $x_0\in\partial\Omega_u$ of density $\Theta_u^V(x_0)=\pi$. 
\end{enumerate} 
In order to construct a solution with the properties {\it (1)} and {\it (2)} described above, we consider the following situation:
\begin{itemize}
\item Consider the two balls $B':=B_1$ and $B'':=B_1(3,0)$ in $\R^2$.
\item Let $C>0$ be a sufficiently large numerical constant ($C=10$ is one possible choice).
\item Let $\eps\ge 0$ and $u_\eps=(u_\eps^1,u_\eps^2)$ be a solution of the problem
\begin{equation}\label{e:example_problem}
\min\Big\{\mathcal E_V(u)\ :\ u\in H^1(\R^2;\R^2)\,,\ u=((1+\eps)C,C)\ \text{on}\ B'\,,\ u=(C,C)\ \text{on}\ B''\Big\}.
\end{equation}
\item Denote by $\Omega_\eps$ the open set $\{|u_\eps|>0\}$. 
\end{itemize}
We claim the following: 
\begin{enumerate}[(i)]
\item The solutions $u_\eps$ are locally Lipschitz continuous in $\R^2\setminus (B'\cup B'')$ with Lipschitz constants that does not depend on $\eps$. This follows directly by Lemma \ref{l:properties_of_min_vect}.
\item The sets $\Omega_\eps$ are open and connected. 

\noindent{\it Proof:} Suppose that this is not the case. Then $\Omega_\eps$ has two connected components $\Omega_\eps^+$, containing $B'$, and $\Omega_\eps^-$, containing $B''$. Then we have that $u_\eps^1=(1+\eps)u_\eps^2$ on $\Omega_\eps^+$ and $u_\eps^1=u_\eps^2$ on $\Omega_\eps^-$.  Moreover, the function $v_\eps=u_\eps^2$ is a solution of the double-phase problem 
\begin{equation*}\label{e:example_dp_eps_problem}
\min\Big\{\left(1+(1+\eps)^2\right)\int_{R^2}|\nabla v_+|^2+2\int_{R^2}|\nabla v_-|^2+|\{v\neq 0\}|\ :\ v=C\ \text{on}\ B'\,,\ v=-C\ \text{on}\ B''\Big\}.
\end{equation*}

We first notice that the double-phase boundary $\{v_\eps>0\}\cap\{v_\eps<0\}$ is non-empty. Indeed, suppose that this is not the case. Then the sets $\{v_\eps>0\}$ and $\{v_\eps<0\}$ are distant and the functions $v_\eps^+$ and $v_\eps^-$ are local solutions of the one-phase problems
\begin{equation*}\label{e:example_op1_eps_problem}
\min\Big\{\left(1+(1+\eps)^2\right)\int_{R^2}|\nabla v_+|^2+|\{v_+> 0\}|\ :\ v_+=C\ \text{on}\ B'\Big\},
\end{equation*}
\begin{equation*}\label{e:example_op2_eps_problem}
\min\Big\{2\int_{R^2}|\nabla v_-|^2+|\{v_->0\}|\ :\ v_-=C\ \text{on}\ B''\Big\}.
\end{equation*}
In particular, we have that the free boundaries $\partial\{v_\eps>0\}$ and $\partial\{v_\eps<0\}$ are smooth and 
$$|\nabla v_\eps^+|^2=1+(1+\eps)^2\quad\text{on}\quad\partial\{v_\eps>0\}\,,\qquad\text{and}\qquad|\nabla v_\eps^-|^2=2\quad\text{on}\quad\partial\{v_\eps<0\}.$$
Consider the radial test function $\phi_t(x)=t-9\ln |x|$ and let $t_0>0$ be the largest $t$ for which $\phi_t\le v_\eps^+$ on $\R^2$. If $t_0<C$, then there is a free boundary point $x_0\in\partial\{v_\eps>0\}$ such that $\phi_t$ touches $v_\eps^+$ from below in $x_0$ (that is $v_\eps^+-\phi_t$ has a local minimum in $x_0$ and $v_\eps^+(x_0)=\phi_t(x_0)=0$). Then 
$$\frac{9}{|x_0|}=|\nabla \phi_t|(x_0)\le |\nabla v_\eps^+|(x_0)=\sqrt{1+(1+\eps)^2}\le 3.$$
Thus $|x_0|\ge 3$ and so $v_\eps>0$ on the ball $B_3$ which is impossible. Then we have that $t_0=C$ and so, $\{\phi_C>0\}\subset\{v_\eps>0\}$.  On the other hand, the set $\{\phi_C>0\}$ is a ball of radius $R=\exp(C/9)$, which again intersects the ball $B''$ if we choose $C$ large enough. Thus, we have that the double-phase boundary $\partial\{v_\eps>0\}\cap\partial\{v_\eps<0\}$ is non-empty. Moreover, by the same argument, there is a point $x_0\in \partial\{v_\eps>0\}\cap\partial\{v_\eps<0\}$ such that $|\nabla v_\eps^+|(x_0)\ge3$. Now for $\eps$ small enough the optimality condition 
$$|\nabla v_\eps^+|^2-|\nabla v_\eps^-|^2=\frac12-\frac1{1+(1+\eps)^2}\quad\text{on}\quad\partial\{v_\eps>0\}\cap\partial\{v_\eps<0\},$$
implies that also $|\nabla v_\eps^-|(x_0)\ge 2$. By the continuity of the normal derivative, there is a radius $r_0>0$ such that the ball $B_{r_0}(x_0)$ contains only double-phase boundary 
$$B_{r_0}(x_0)\cap \partial \{v_\eps\neq0\}=B_{r_0}(x_0)\cap \partial\{v_\eps>0\}\cap\partial\{v_\eps<0\}.$$
In particular, the set $\{v_\eps=0\}$ has measure zero in $B_1$ and so the functions $u_\eps^1$ and $u_\eps^2$ are both harmonic (and so, smooth) in $B_{r_0}(x_0)$. Now since $u_\eps^1=u_\eps^2$ on $\Omega_\eps^-$ we get that $|\nabla u_\eps^1|(x_0)=|\nabla u_\eps^2|(x_0)$, while since $u_\eps^1=(1+\eps)u_\eps^2$ on $\Omega_\eps^+$ we get that $|\nabla u_\eps^1|(x_0)=(1+\eps)|\nabla u_\eps^2|(x_0)$, which is impossible since by the choice of $x_0$ the gradient is non-zero in this point. Thus, the set $\Omega_\eps$ has to be connected.\qed 
\item Up to a subsequence, $u_\eps$ converges in $H^1(\R^2;\R^2)$ and locally uniformly in $\R^2\setminus (B'\cup B'')$ to a function $u_0$, solution of the problem \eqref{e:example_problem} with $\eps=0$. The uniform convergence follows by the fact that the family of functions is locally uniformly Lipschitz, while the fact that $u_0$ is a minimizer follows by a standard argument, usually applied to blow-up sequences (see for example \cite{MaTeVe}).
\item The function $u_0$ has two equal components $v:=u_0^1=u_0^2$ that are solutions of the double-phase problem 
\begin{equation}\label{e:example_dp_problem}
\min\Big\{\int_{\R^2}|\nabla v|^2+|\{v\neq 0\}|\ :\ v\in H^1(\R^2)\,,\ v=C\ \text{on}\ B'\,,\ v=-C\ \text{on}\ B''\Big\}.
\end{equation}
This claim is just a consequence of the fact that the components of $u_0$ are harmonic functions on the same domain with the same boundary datum.
\item There is a point $x_0\in \partial \{v>0\}\cap \partial\{v<0\}$ such that there are points of the one-phase free boundary $\left(\partial\{v>0\}\setminus\partial\{v<0\}\right)\cup\left(\partial\{v<0\}\setminus\partial\{v>0\}\right)$ arbitrarily close to $x_0$, that is $x_0$ is on the boundary of the one-phase free-boundary. The same argument as the one that we used in the proof of {\it (ii)} shows that the double-phase boundary is non-empty $\partial \{v>0\}\cap \partial\{v<0\}\neq\emptyset$. Moreover, at least one of the boundaries $\partial\{v>0\}\setminus\partial\{v<0\}$ and $\partial\{v<0\}\setminus\partial\{v>0\}$ is non-empty (it is easy to show that the point of $\partial\{v\neq0\}$ at the largest possible distance from zero has to be a one-phase point since the density of the set $\{v\neq0\}$ cannot exceed $1/2$ in that point). 

\item There are $r_0>0$ and $\eps>0$ such that $u_\eps$ has a point of density $\pi$ in $B_{r_0}(x_0)$. Suppose by contradiction that this is not the case. Then, we can apply the epiperimetric inequality at a uniform scale, that is there exist constants $C_0>0$ and $r_0>0$ such that for every $\eps>0$ 
$$\| u_{\eps,x,r}-u_{\eps,x}\|_{L^\infty(B_1)}\le C_0r^\gamma\quad\text{for every}\quad x\in B_{r_0}(x_0),\ 0<r<r_0,$$
where $u_{\eps,x,r}(y)=\frac1ru_\eps(x+ry)$ and $\ds u_{\eps,x}=\lim_{r\to0}u_{\eps,x,r}$. As a consequence, we get 
$$|e_\eps(x)-e_\eps(y)|\le C_0|x-y|^\alpha\quad\text{for every}\quad x,y\in B_{r_0}(x_0),$$
where $e_\eps(x)$ and $e_\eps(y)$ are the exterior normals to $x$ and $y$. On the other hand, let $x_+\in\partial\{v>0\}\setminus\partial\{v<0\}$ be sufficiently close to $x_0$. We notice that for $r>0$ sufficiently small we have $B_r(x_+)\subset B_{r_0}(x_0)$ and 
$$\| u_{0,x_+,r}-u_{0,x_+}\|_{L^\infty( B_1)}\le C_0r^\gamma.$$
By the convergence of $u_\eps\to u_0$ we have
$$\lim_{\eps\to0}\| u_{\eps,x_+,r}-u_{0,x_+,r}\|_{L^\infty(B_1)}=0.$$
Now choosing $\eps>0$ small enough, there are free-boundary points $x_\eps^+\in\partial\Omega_\eps$ arbitrarily close to $x_+$, that is $\ds\lim_{\eps\to0}x_\eps^+=x_+$. Then also $e_0(x_+)=\lim_{\eps\to0} |e_\eps(x_\eps^+)$. In particular, all the free boundaries $\partial\Omega_\eps$ are $C^{1,\alpha}$ graphs in the ball $B_{r_0}(x_0)$ with uniform constants. Thus, also the limit has to be a $C^{1,\alpha}$ graph, which is a contradiction with the fact that the density of the set $\Omega_0=\{|u_0|>0\}$ in $x_0$ is one.
\end{enumerate}

\subsection{Proof of Theorem \ref{c:Q_funct}} We are going to give the proof only for the case $\mathcal{E}^q_{OP}$, the other two cases being analogous. We start by showing that Proposition \ref{p:improvement} holds in this case too. Indeed notice that, if $u$ is a minimizer of $\mathcal{E}^q_{OP}$ in a ball $B_{r}(x_0)\subset \Omega$, then
$$
\int_{B_r(x_0)}|\nabla u|^2+q(x_0)\,|\{u>0\}\cap B_r(x_0)|\leq \int_{B_r(x_0)}|\nabla v|^2+q(x_0)\,|\{v>0\}\cap B_r(x_0)|+C\, r^{2+\alpha}\,,
$$
where we used that $q\in C^{0,\alpha}$ and $C$ depends only on $q$. Rescaling everything by $q(x_0)$, we can assume that $q(x_0)=1$, and so $u$ is an almost minimizer for the functional $\mathcal{E}_{OP}$, that is for every $x_0\in \Omega$ and every $0<r<\dist(x_0, \de \Omega)$ we have
\begin{equation}\label{e:alm_mon}
\int_{B_r(x_0)}|\nabla u|^2+|\{u>0\}\cap B_r(x_0)|\leq \int_{B_r(x_0)}|\nabla v|^2+|\{v>0\}\cap B_r(x_0)|+C\, r^{2+\alpha}\,.
\end{equation}
Next, using \eqref{e:Weiss_monotonicity}, which holds for any function, applied to $u_{r,x_0}$, combined with the almost monotonicity \eqref{e:alm_mon}, we get
$$
\frac{d}{dr}\left(W(u,r)-\frac{\pi}2\right)
\geq \frac{2}{r}\left(\frac\eps{1-\eps}\left(W(u_r,1)-\frac{\pi}2\right)-C\,r^{\alpha}\right)\,.
$$
Let us denote by $\Phi(r):=W(u_r,1)-\frac{\pi}2$, then the previous inequality reads
$$
0\leq \Phi'(r)-\left(\frac{2\eps}{1-\eps}\right)\,r^{-1} \Phi(r)+C\,r^{\alpha-1}$$
so that, multiplying both sides by $r^{-\gamma}$, with $\gamma=\frac{2\eps}{1-\eps}$, we get
$$-C\, r^{\alpha-\gamma-1}\leq \left(\Phi(r)\,r^{-\gamma}\right)'\,.$$
Choosing $\eps$ small enough depending only on $q$, we can assume that $\alpha-\gamma>0$, so that integrating the previous inequality we conclude
$$
\frac{\Phi(s)+C\,s^\alpha}{s^\gamma}\leq\frac{\Phi(t)+C\,t^\alpha}{t^\gamma}\quad\text{for every}\quad 0<s<t\leq r_0\,.
$$
Reasoning as in Proposition \ref{p:improvement}, this implies that there exists a universal constant $\gamma>0$ such that for every compact set $K\Subset \Omega$ there is a constant $C>0$ for which the following inequality holds
$$
W^{OP}(u,r,x_0)-\frac\pi2\le C\, r^{\gamma}\left(W^{OP}(u,1,x_0)-\frac\pi2\right)
\qquad 0<r<\dist(K, \de \Omega)\,,\;\forall x_0\in \de\{u>0\}\cap K\,.
$$
Applying this estimate together with \eqref{e:mon_rem}, and reasoning as in proposition \ref{p:decay}, we immediately conclude that for every compact set $K\Subset\Omega$, there is a constant $C>0$ such that for every free boundary point $x_0\in \de\{u>0\}\cap K$, the following decay holds
\begin{equation}\label{e:L^2decay11}
\| u_{x_0,t}-u_{x_0,s} \|_{L^2(\partial B_1)} \leq C\, t^{\sfrac{\gamma}{2}}\qquad  \mbox{for all}\quad 0< s<t < \dist (K,\de \Omega)\,.
\end{equation} 
Reasoning as in the proofs of Theorems \ref{t:uniq_blowup} and \ref{t:main}, the conclusion easily follow. \\
\qed

\bibliographystyle{plain}
\bibliography{references-Cal}

\end{document}